\title[Branching of some holomorphic representations of SO(2,n)]{Branching 
of some holomorphic representations of SO(2,n)}
\author{Henrik Seppänen}
\address{Henrik Sepp\"{a}nen, Fachbereich Mathematik, AG AGF
Technische Universit\"at Darmstadt
Schlo{\ss}gartenstra{\ss}e 7
64289 Darmstadt}
\email{seppaenen@mathematik.tu-darmstadt.de}
\keywords{ Bounded symmetric domains, Lie groups, Lie algebras, unitary 
representations, spherical functions,
hypergeometric functions, intertwining operator}
\subjclass{32M15, 22E46, 22E43, 43A90, 32A36}
\newtheorem{prop}{Proposition}
\newtheorem{lemma}[prop]{Lemma}
\newtheorem{cor}[prop]{Corollary}
\newtheorem{thm}[prop]{Theorem}
\theoremstyle{definition}
\newtheorem{defin}[prop]{Definition}
\theoremstyle{remark} \newtheorem*{remark}{Remark}
\begin{document}

\maketitle

\begin{abstract}
In this paper we consider the analytic continuation of the weighted 
Bergman spaces on the Lie ball
$$\mathscr{D}=SO(2,n)/S(O(2) \times O(n))$$ and the corresponding 
holomorphic unitary (projective) representations of $SO(2,n)$ on
these spaces. These representations are known to be irreducible. Our aim 
is to decompose them under the subgroup $SO(1,n)$ which
acts as the isometry group of a totally real submanifold $\mathscr{X}$ of 
$\mathscr{D}$.
We give a proof of a general decomposition theorem for certain unitary 
representations of semisimple Lie groups.
In the particular case we are concerned with, we find an explicit formula 
for the Plancherel measure of the decomposition
as the orthogonalising measure for certain hypergeometric polynomials.
Moreover, we construct an explicit generalised Fourier transform that 
plays the role of the intertwining operator for the
decomposition. We prove an inversion formula and a Plancherel formula for 
this transform. Finally we construct explicit
realisations of the discrete part appearing in the decomposition and also 
for the minimal representation in this family.
\end{abstract}

\section*{Introduction}
One of the main problems in the representation theory of Lie groups and harmonic 
analysis on Lie groups is to decompose some interesting representations of 
a Lie group
$G$ under a subgroup $H \subset G$. This decomposition is also 
called the \emph{branching rule}. Among other things, this has led to 
the discovery of new interesting representations.  An exposition of the general theory for compact connected
Lie groups, including the classical results for $U(n)$ and $SO(n)$ (by Weyl and Murnaghan respectively),
can be found in \cite{knapp-beyond}.

Since the work by R. Howe \cite{howedual} and M. Kashiwara and M. Vergne (cf \cite{k-w}), it 
has turned
out to be fruitful to study the branching of singular and minimal 
holomorphic representations of a
Lie group acting on a function space of holomorphic functions on a bounded 
symmetric domain. In \cite{jak-v-restr}, Jakobsen and Vergne study the restriction of 
the tensor product of two holomorphic representations to the diagonal subgroup.

In this paper we will study the branching of the analytic continuation of 
the scalar holomorphic discrete series of $SO(2,n)$ under the subgroup 
$H=SO_0(1,n)$. The subgroup $H$ here is
realised as the isometry group of a totally real submanifold of the Lie 
ball $SO(2,n)/S(O(2) \times O(n))$.
The branching for a general Lie group $G$ of Hermitian type under a 
symmetric subgroup $H$ has been studied recently by Neretin (\cite{ner}, \cite{nerbeta}),
Zhang (\cite{zhtams},\cite{zhSB},\cite{zhtp})
and by van Dijk and Pevzner \cite{dijkpevz}. In \cite{kob-orsted-minbranch}, Kobayashi and {\O}rsted
studied the branching for some minimal representations.  
The branching
rule for regular parameter and for some minimal representations is now 
well understood.
However, the problem of finding the branching rule for non-discrete, non-regular parameter
is a difficult one, and there is still no complete theory for the general case.

We find the branching rule for arbitrary scalar parameter $\nu$ in the 
Wallach set of $SO(2,n)$. It turns out that for small parameters $\nu$ there appears a discrete part in the decomposition. 
We discover here an intertwining operator realising the corresponding representation.
It should be mentioned that for large parameter (in this case $\nu > n-1$) the corresponding branching problem
has been solved by Zhang in \cite{zhtams} for arbitrary bounded symmetric domains. 
 
The paper is organised as follows. In Section 1 we describe the geometry of 
the Lie ball. In Section 2 we
recall some facts about general bounded symmetric domains and Jordan triple 
systems. In Section 3 we establish some facts
about the real part of the Lie ball. In Section 4 we consider a family of 
function spaces and corresponding unitary
representations. Section 5 is devoted to branching theorems and to finding 
the Plancherel measure. In Sections 6 and
7 we find realisations of the representations corresponding to the 
discrete part in the decomposition and to the minimal
point in the Wallach set respectively.\\\\
\noindent {\bf Acknowledgement.} I would like to thank Prof.\ Genkai Zhang 
for his guidance and support during the
preparation of this paper and for all the discussions we have had on 
related topics. It is also my pleasure to thank
Prof.\ Andreas Juhl for having read an earlier version of this manuscript 
and for having provided valuable comments.
Finally, I would like to thank the anonymous referee for valuable comments, especially
for having brought to my attention several interesting papers on branching that I was not aware of. 

\section{The Lie ball as a symmetric space $SO_0(2,n)/SO(2) \times SO(n)$}
In this paper we study representations on function spaces on the domain
\begin{eqnarray}
\mathscr{D}=\{z \in \mathbb{C}^n | 1-2\langle z,z \rangle + |zz^t|^2 > 0, 
|z|<1\}.
\end{eqnarray}
We will only be concerned with the case $n>2$. (If $n=1$ it is the unit 
disk, $U$, and if $n=2$, $\mathscr{D} \cong U \times U$). In this section 
we describe $\mathscr{D}$ as the quotient of $SO_0(2,n)$ by $SO(2) \times 
SO(n))$ by studying a holomorphically equivalent model
on which we have a natural group action induced by the linear action on a submanifold of a
Grassmanian manifold.
Consider $\mathbb{R}^{n+2}\cong \mathbb{R}^{2} \oplus \mathbb{R}^{n}$ 
equipped with the non-degenerate bilinear form
\begin{eqnarray*}
(x|y):=x_{1}y_{1}+x_{2}y_{2}-x_{3}y_{3}- \ldots -x_{n+2}y_{n+2},
\end{eqnarray*}
where the coordinates are with respect to the standard basis $e_1, \cdots, 
e_{n+2}$. Let $SO(2,n)$ be the group of
all linear transformations on $\mathbb{R}^{n+2}$ that preserve this form 
and have determinant 1, i.e.,
\begin{eqnarray*}
SO(2,n)=\{g \in GL(2+n,\mathbb{R}) | (gx|gy)=(x|y), x,y \in 
\mathbb{R}^{2+n}, \det g=1\}
\end{eqnarray*}
Let $\mathcal{G}_{(2,n)}^{+}$ denote the set of all two-dimensional 
subspaces of $\mathbb{R}^{2} \oplus \mathbb{R}^{n}$
on which $(\cdot |\cdot )$ is positive definite.
Clearly $\mathbb{R}^2 \oplus \{0\}$ is one of these subspaces. It will be 
the reference point in $\mathcal{G}_{(2,n)}^{+}$ and
we will denote it by $V_0$.
The group $SO(2,n)$ acts naturally on this set and the action is  
transitive. In fact, the connected component of the identity, $SO_0(2,n)$ acts transitively. We will let $G$
denote this group.

We denote by $K$ the stabilizer subroup of $V_0$, 
i.e.,
\begin{eqnarray}
K=\{g \in G |g(V_0)=V_0\}.
\end{eqnarray}
Any element $g \in G$ can be identified with a $(2+n)\times(2+n)$- 
matrix of the form
\begin{eqnarray}
\left( \begin{array}{cc}
A & B\\
C & D \label{matrix group}
\end{array} \right),
\end{eqnarray} 
where $A$ is a $2 \times 2$-matrix.
With this identification, $K$ clearly corresponds to the matrices
\begin{eqnarray*}
\left( \begin{array}{cc}
A & 0\\
0 & D
\end{array} \right),
\end{eqnarray*}
where $A$ and $D$ are orthogonal $2 \times 2$- and $n \times n$-matrices with deerminant one 
respectively, i.e.,
$K \cong SO(2) \times SO(n)$.
The space $\mathcal{G}_{(2,n)}^{+}$ can be realised as the unit ball in 
$M_{n2}(\mathbb{R})$ with the operator norm. Indeed,
let $V \in \mathcal{G}_{(2,n)}^{+}$. If $v=v_1+v_2 \in V$, then $v_1=0$ 
implies that $v_2=0$, i.e., the projection $v \mapsto v_1$ is an injective mapping. This means that 
there is a real $n \times 2$ matrix $Z$
with $Z^tZ < I_2$, such that
\begin{eqnarray}
V=\{(v \oplus Z v) | v \in \mathbb{R}^2\}. \label{matris}
\end{eqnarray}
Conversely, if $Z \in M_{n2}(\mathbb{R})$ satisfies $Z^tZ < I_2$, then 
\eqref{matris}
defines an element in $\mathcal{G}_{(2,n)}^{+}$. 

Using \eqref{matrix group} to identify $g$ with a matrix and letting $V$ 
correspond to the matrix $Z$, then clearly
\begin{eqnarray*}
gV&=&\{(Av + BZv \oplus Cv + DZ v) | v \in \mathbb{R}^2)\}\\
&=&\{v \oplus (C + DZ)(A+BZ)^{-1} v) | v \in \mathbb{R}^2)\}.
\end{eqnarray*}
In other words, we have a $G$-action on the set
\begin{eqnarray*}
M=\{Z \in M_{n2}(\mathbb{R}) | Z^tZ < I_2\}
\end{eqnarray*}
given by

\begin{eqnarray*}
Z \mapsto (C+DZ)(A+BZ)^{-1}.
\end{eqnarray*}
This exhibits $M$ as a symmetric space. $$M \cong G/K.$$

Moreover, we identify the matrix $Z=(X Y)$ with the vector $X+iY$ in 
$\mathbb{C}^n$ in order to obtain an almost complex structure on $M$.
With respect to this almost complex structure, the action of $G$ is in fact holomorphic.
Moreover we have the following 
result by Hua (see \cite{hua2}).

\begin{thm}
The mapping
\begin{eqnarray*}
\mathcal{H}: z \mapsto Z=2
\left(\left( \begin{array}{cc}
zz^t+1 & i(zz^t-1)\\
\overline{z}\overline{z}^t+1 & -i( \overline{z}\overline{z}^t-1)
\end{array} \right)^{-1}
\left(\begin{array}{c}
z\\
\overline{z}
\end{array}\right)
\right)^t,
\end{eqnarray*}
where $zz^t=z_1^2+ \cdots +z_n^2$, is a holomorphic diffeomorphism of the bounded domain
\begin{eqnarray*}
\mathscr{D}=\{z \in \mathbb{C}^n | 1-2\langle z,z \rangle + |zz^t|^2 > 0, 
|z|<1\}
\end{eqnarray*}
onto $M$.
\end{thm}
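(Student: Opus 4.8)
The plan is to verify by direct computation that $\mathcal{H}$ is well defined, that it maps $\mathscr{D}$ holomorphically into $M$, and that it has a holomorphic inverse; the conceptual content behind the computation is that $\mathcal{H}$ is nothing but the Cayley transform relating the two realizations of the type $IV$ domain $SO_0(2,n)/K$. Throughout I write $w=zz^t=z_1^2+\cdots +z_n^2$ and $s=\langle z,z\rangle=|z|^2$, and I denote by $P(z)$ the $2\times 2$ matrix inverted in the formula for $\mathcal{H}$.

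First I would settle well-definedness, reality, and holomorphy. A short computation gives $\det P(z)=2i\,(1-|w|^2)$, and since $|w|=|\sum_j z_j^2|\le \sum_j|z_j|^2=s<1$ on $\mathscr{D}$, the matrix $P(z)$ is invertible there, so $\mathcal{H}$ is defined and real-analytic on $\mathscr{D}$. Writing $Z=(X\ Y)=2\bigl(P(z)^{-1}\binom{z}{\overline z}\bigr)^t$ out explicitly, one finds that after the scalar normalization the first row of $P(z)^{-1}\binom{z}{\overline z}$ is a real vector and the second is $i$ times a real vector, so that both columns $X,Y$ are real and $Z\in M_{n2}(\mathbb{R})$. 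Finally, under the identification $(X\ Y)\leftrightarrow X+iY$ a direct Cauchy--Riemann computation, using $\partial w/\partial\overline z_j=0$, shows that $\mathcal{H}$ is compatible with the complex structures, hence holomorphic.

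Next comes the crux, the inclusion $\mathcal{H}(\mathscr{D})\subseteq M$, i.e.\ $I_2-Z^tZ>0$. The key identity is that, after substituting the explicit columns and using $zz^t=w$, $\overline z\,\overline z^t=\overline w$, $z\overline z^t=s$, the determinant $\det(I_2-Z^tZ)$ of the $2\times 2$ Gram matrix $Z^tZ$ equals a manifestly positive (automorphy-type) factor times the Hua expression $1-2\langle z,z\rangle+|zz^t|^2$; establishing this identity is the main computational obstacle, and is exactly where the special structure of the type $IV$ domain enters. Granting it, the inclusion follows by a connectedness argument rather than by matching both inequalities: $\mathscr{D}$ is connected, the base point $z=0$ gives $\mathcal{H}(0)=0$ with $Z^tZ=0$ (both eigenvalues below $1$), and $\det(I_2-Z^tZ)>0$ everywhere on $\mathscr{D}$, so no eigenvalue of $Z^tZ$ can cross the value $1$; by continuity both eigenvalues remain $<1$ throughout $\mathscr{D}$, whence $Z\in M$.

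Finally, bijectivity. I would produce the explicit inverse $\Psi\colon M\to\mathbb{C}^n$, the corresponding inverse Cayley transform (a rational map in the real entries of $Z$), and check $\Psi\circ\mathcal{H}=\mathrm{id}_{\mathscr{D}}$ and $\mathcal{H}\circ\Psi=\mathrm{id}_M$ by the same substitutions; the first gives injectivity and the second gives surjectivity together with $\Psi(M)\subseteq\mathscr{D}$. Here one must confirm that the correct branch is selected: in the rank-one slice $z=(t,0,\ldots,0)$ the formula collapses to $t\mapsto 2t/(1+t^2)$, which maps $(-1,1)$ bijectively onto itself but is two-to-one on $\mathbb{R}$, so the restriction to $\mathscr{D}$ is genuinely needed. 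Conceptually every step above is forced: both $\mathscr{D}$ and $M$ are models of $SO_0(2,n)/K$, $\mathcal{H}$ fixes the base point $0$ and intertwines the two linear-fractional $G$-actions, so $G$-equivariance together with transitivity already yields the bijection, and the explicit verification is merely the coordinate shadow of this fact. Combining the four steps, $\mathcal{H}$ is a holomorphic diffeomorphism of $\mathscr{D}$ onto $M$.
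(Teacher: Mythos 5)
First, a point of comparison: the paper does not prove this theorem at all --- it is quoted as a classical result of Hua, with a bare reference to \cite{hua2} --- so your proposal has to stand entirely on its own, and as it stands it has a fatal gap at precisely the step you wave through fastest, namely holomorphy. Carrying out the computation you defer: with $w=zz^t$ and $\det P(z)=2i(1-|w|^2)$ (correct), the two columns of $Z$ are given, as row vectors, by $X^t=\bigl((1-\overline w)z+(1-w)\overline z\bigr)/(1-|w|^2)$ and $Y^t=i\bigl((\overline w+1)z-(w+1)\overline z\bigr)/(1-|w|^2)$, whence $X^t+iY^t=2(\overline z-\overline w\,z)/(1-|w|^2)$. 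On the complex line $z=(t,0,\ldots,0)\subset\mathscr{D}$ this is $t\mapsto 2\overline t/(1+|t|^2)$, and with the opposite identification $X-iY$ it is $t\mapsto 2t/(1+|t|^2)$; neither satisfies the Cauchy--Riemann equations. In fact no constant-coefficient combination $aX+bY$ can be holomorphic in $z$: killing the $\overline z$-part would force $a(1-w)=bi(w+1)$ identically in $w$, hence $a=b=0$. So the ``direct Cauchy--Riemann computation'' you appeal to does not exist: the complex structure on $M$ for which the theorem is true is the $G$-invariant one, which agrees with the flat structure coming from $X+iY$ only at $Z=0$ (there $d\mathcal{H}_0$ is $\mathbb{C}$-antilinear, resp.\ linear). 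Establishing holomorphy therefore requires a genuinely different idea --- identify $\mathrm{Aut}_0(\mathscr{D})$ independently, prove that $\mathcal{H}$ intertwines it with the linear-fractional $G$-action on $M$, and transport the structure --- and note that your ``conceptual'' fallback via equivariance is circular in this paper's framework, where the action of $G$ on $\mathscr{D}$ is \emph{defined} by conjugation with $\mathcal{H}$.

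The remaining steps are a plan rather than a proof, though the plan is partly sound. The inclusion argument via connectedness is a nice idea and does survive scrutiny, but the identity you call the main computational obstacle is simply assumed, and its actual form is not what you guessed: one finds $\det(I_2-Z^tZ)=\bigl((1-2\langle z,z\rangle+|zz^t|^2)/(1-|zz^t|^2)\bigr)^2$, i.e.\ the \emph{square} of the Hua polynomial appears, not the first power times an automorphy factor. Your connectedness argument happens to be robust to this change (the determinant is still strictly positive on $\mathscr{D}$, so no eigenvalue of $Z^tZ$ can cross $1$), but the discrepancy shows that guessing the shape of the identity cannot substitute for the computation. Likewise the inverse $\Psi$ is never exhibited; the paper itself (Section 3, equations for the inverse Hua transform) shows this requires solving a quadratic for $zz^t$ and justifying the choice of root, which is not automatic. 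In sum: well-definedness and reality of $Z$ are fine, the inclusion step is repairable, but the holomorphy step as described would fail, and bijectivity is not established.
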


We will call this mapping the \emph{Hua transform}. It allows us to describe $\mathscr{D}$ as a symmetric space
\begin{eqnarray*}
\mathscr{D} \cong M \cong G/K.
\end{eqnarray*}


\begin{section}{Bounded symmetric domains and Jordan pairs}
In this section we review briefly some general theory on bounded symmetric 
domains and Jordan pairs. All proofs
are omitted. For a more detailed account we refer to Loos (\cite{loos}) and to Faraut-Koranyi (\cite{FK}).

Let $\mathcal{D}$ be a bounded open domain in $\mathbb{C}^n$ and 
$\mathcal{H}^2(\mathcal{D})$ be the Hilbert space of all
square integrable holomorphic functions on $\mathcal{D}$,
\begin{eqnarray*}
\mathcal{H}^2(\mathcal{D})=\{f, f \,\text{holomorphic on}\,\ \mathcal{D}\,| 
\int_{\mathcal{D}}|f(z)|^2dm(z)<\infty\},
\end{eqnarray*}
where $m$ is the $2n$-dimensional Lebesgue measure.
It is a closed subspace of $L^2(\mathcal{D})$. For
every $w \in \mathcal{D}$, the evaluation functional $ f \mapsto f(w)$ is 
continuous, hence $\mathcal{H}^2(\mathcal{D})$
has a reproducing kernel $K(z,w)$, holomorphic in $z$ and antiholomorphic 
in $w$ such that

\begin{eqnarray*}
f(w)=\int_{\mathcal{D}}f(z)\overline{K(z,w)}dm(z).
\end{eqnarray*}
$K(z,w)$ is called the Bergman kernel. It has the transformation
property
\begin{eqnarray}
K(\varphi(z),\varphi(w))=J_{\varphi}(z)^{-1}K(z,w)\overline{J_{\varphi}(w)}^{-1},
\end{eqnarray}
for any biholomorphic mapping $\varphi$ on $\mathcal{D}$ with complex 
Jacobian $J_{\varphi}(z)=\det d\varphi(z)$.
Hereafter biholomorphic mappings will be referred to as automorphisms.
The formula
\begin{eqnarray}
h_z(u,v)=\partial_u\partial_{\overline{v}}\log K(z,z)
\end{eqnarray}
defines a Hermitian metric, called the Bergman metric. It is invariant 
under automorphisms and
its real part is a Riemannian metric on $\mathcal{D}$.

A bounded domain $\mathcal{D}$ is called \emph{symmetric} if, for each $z 
\in \mathcal{D}$ there is an involutive
automorphism $s_z$ with $z$ as an isolated fixed point.
Since the group of automorphisms, $Aut(\mathcal{D})$ preserves the Bergman 
metric, $s_z$ coincides with
the local geodesic symmetry around $z$. Hence $\mathcal{D}$ is a Hermitian 
symmetric space.

A domain $\mathcal{D}$ is called \emph{circled} (with respect to 0) if $0 
\in \mathcal{D}$ and $e^{it}z \in
\mathcal{D}$ for every $z \in \mathcal{D}$ and real $t$.

Every bounded symmetric domain is holomorphically isomorphic with a 
bounded symmetric and circled domain. It is
unique up to linear isomorphisms.

From now on $\mathcal{D}$ denotes a circled bounded symmetric domain. $G$ 
is the identity component of
$Aut(\mathcal{D})$, $K$ is the isotropy group of $0$ in $G$. The Lie 
algebra $\mathfrak{g}$ will
be considered as a Lie algebra of holomorphic vector fields on 
$\mathcal{D}$, i.e., vector fields $X$ on $\mathcal{D}$
such that $Xf$ is holomorphic if $f$ is.
The symmetry $s ,z \mapsto -z$  around the origin induces an invoulution 
on $G$ by $g \mapsto sgs^{-1}$ and, by
differentiating, an involution $Ad(s)$ of $\mathfrak{g}$. We have the 
Cartan decomposition
\begin{eqnarray*}
\mathfrak{g}=\mathfrak{k} \oplus \mathfrak{p}
\end{eqnarray*}
into the $\pm 1$-eigenspaces.

For every $v \in \mathbb{C}^n$, let $\xi_v$ be the unique vector field in 
$\mathfrak{p}$
that takes the value $v$ at the origin. Then
\begin{eqnarray}
\xi_v(z)=v-Q(z)\overline{v} \label{vectorfield}
\end{eqnarray}
where  $Q(z): \overline{V} \rightarrow V$ is a complex linear mapping and 
$Q: V \rightarrow $ Hom$(\overline{V},V)$ is a
homogeneous quadratic polynomial.
Hence $Q(x,z)=Q(x+z)-Q(x)-Q(z): \overline{V} \rightarrow V$ is bilinear 
and symmetric in $x$ and $z$. For $x,y,z \in V$, we define
\begin{eqnarray}
\{x\overline{y}z\}=D(x,\overline{y})z=Q(x,z)\overline{y}
\end{eqnarray}
Thus $\{x\overline{y}z\}$ is complex bilinear and symmetric in $x$ and $z$ 
and complex antilinear in $y$, and $D(x,\overline{y})$ is
the endomorphism $z \mapsto \{x\overline{y}z\}$ of $V$.

The pair 
$(V,\{\,\,\})$ is called a \emph{Jordan triple system}.
This Jordan triple system is positive
in the sense that
if $v \in V, v \neq 0$ and $Q(v)\overline{v}=\lambda v$ for some $\lambda 
\in \mathbb{C}$, then $\lambda$ is positive.
We introduce the endomorphisms
\begin{eqnarray}
B(x,y)=I-D(x,\overline{y})+Q(x)\overline{Q}(\overline{y})
\end{eqnarray}
of $V$ for $x,y \in V$, where $
\overline{Q}(\overline{y})x=
\overline{Q({y})\bar x}$.
We summarise some results in the following proposition.
\begin{prop}
a) The Lie algebra $\mathfrak{g}$ satisfies the relations
\begin{eqnarray}
\left[\xi_u,\xi_v\right]&=&D(u,\overline{v})-D(v,\overline{u})\\
\left[l,\xi_{u}\right]&=&\xi_{lu}
\end{eqnarray}
for $u,v \in V$ and $l \in \mathfrak{k}$\\
b) The Bergman kernel $k(x,y)$ of $\mathcal{D}$ is
\begin{eqnarray}
m(\mathcal{D})^{-1}\det B(x,y)^{-1}
\end{eqnarray}
c) The Bergman metric at $0$ is
\begin{eqnarray}
h_0(u,v)=\mbox{tr} D(u,\overline{v}),
\end{eqnarray}
and at an arbitrary point $z\in \mathcal{D}$
\begin{eqnarray}
h_z(u,v)=h_0(B(z,z)^{-1}u,v)
\end{eqnarray}
d) The triple product $\{\,\,\}$ is given by
\begin{eqnarray}
h_0(\{u\overline{v}w\},y)=\partial_u\partial_{\overline{v}}\partial_x\partial_{\overline{y}} 
\log K(z,z)|_{z=0}
\end{eqnarray}
\end{prop}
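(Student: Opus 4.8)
The plan is to handle the four parts separately, reducing each either to a direct computation with the explicit polynomial vector fields $\xi_v(z)=v-Q(z)\overline v$ of \eqref{vectorfield} or to the transformation law of the Bergman kernel combined with transitivity of $G$. For part a), I would compute the bracket of two holomorphic vector fields from $[\xi_u,\xi_v](z)=D\xi_v(z)\,\xi_u(z)-D\xi_u(z)\,\xi_v(z)$. Since $\xi_v(z)=v-Q(z)\overline v$ and $Q$ is a homogeneous quadratic, its differential in a holomorphic direction $w$ is $D\xi_v(z)w=-Q(z,w)\overline v$, where $Q(z,w)$ is the polarisation of $Q$. Substituting and using the identity $Q(z,u)\overline v=D(u,\overline v)z$, which is immediate from the definition $\{x\overline y z\}=Q(x,z)\overline y$, the terms linear in $z$ collect into $\pm(D(u,\overline v)-D(v,\overline u))z$, i.e.\ the linear vector field attached to the claimed element of $\mathfrak k$ (the sign being fixed by the chosen identification of $\mathfrak k$ with linear fields). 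The cubic terms in $z$ must cancel, and this is the only genuinely non-trivial point: I expect the cancellation to follow from the five-linear Jordan triple identity governing $\{x\overline y z\}$, and verifying it is the main obstacle of part a). The relation $[l,\xi_u]=\xi_{lu}$ is easier: writing $l\in\mathfrak k$ as the linear field $z\mapsto lz$, the bracket reduces to the statement that $l$ acts as a derivation of the triple product, which is precisely what characterises $\mathfrak k$.

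Part b) I would obtain from the transformation property of the Bergman kernel. The function $(z,w)\mapsto\det B(z,w)^{-1}$ satisfies the same cocycle law as $K(z,w)$ under $G$, because the determinant of the Bergman operator reproduces the Jacobian factors of the automorphisms; hence the two agree up to a multiplicative constant. That constant is pinned down at the origin: $B(0,0)=I$ gives $\det B(0,0)^{-1}=1$, while $K(0,0)=m(\mathcal D)^{-1}$ is the reciprocal of the volume, since the constants lie in $\mathcal H^2(\mathcal D)$ and the reproducing property forces this value. The substantive step is identifying $\det B$ with the relevant Jacobian, after which transitivity of the $G$-action completes the argument.

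Parts c) and d) then follow by differentiating the explicit kernel from b). Writing $\log K(z,z)=-\log\det B(z,z)+\text{const}$ and using $\det B=\exp\operatorname{tr}\log B$ with $B(z,z)=I-D(z,\overline z)+Q(z)\overline Q(\overline z)$, I would expand $\log\det B(z,z)$ near $0$. The second-order mixed derivatives $\partial_u\partial_{\overline v}$ at $z=0$ pick out the leading term $-D(z,\overline z)$ and yield $h_0(u,v)=\operatorname{tr}D(u,\overline v)$, while the formula $h_z(u,v)=h_0(B(z,z)^{-1}u,v)$ at a general point is read off from the invariance of the metric together with the Bergman operator's role in the transformation law. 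For d), I would push the same expansion to fourth order: the mixed derivative $\partial_u\partial_{\overline v}\partial_w\partial_{\overline y}$ of $-\log\det B(z,z)$ at the origin isolates the term quadratic in $D$, whose trace is exactly $h_0(\{u\overline v w\},y)$ by the definition of the triple product. Throughout, the only analytic input is tracking which monomials survive the mixed holomorphic and antiholomorphic differentiation at $0$; the remainder is bookkeeping organised by the Jordan-pair identities recalled above.
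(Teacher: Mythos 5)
The paper never proves this proposition: Section 2 opens with the remark that ``All proofs are omitted'' and refers to Loos \cite{loos} and Faraut--Kor\'anyi \cite{FK}, so your attempt can only be measured against the standard arguments in those references. Your outline does follow that standard route, namely direct computation with $\xi_v(z)=v-Q(z)\overline v$ for a), a cocycle-plus-transitivity argument for b), and differentiation of $-\log\det B(z,z)$ for c) and d), and nothing in it points in a wrong direction.

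As a proof, however, it is incomplete at exactly the places you yourself flag, plus one you do not. (i) In a), the vanishing of the cubic terms cannot be left as something you ``expect'': either you verify the quintic identity $D(z,\overline v)Q(z)\overline u=D(z,\overline u)Q(z)\overline v$ --- which is delicate here, because in the development the paper follows, the Jordan triple axioms for $\{\,\,\}$ are themselves ultimately derived from the bracket relations you are trying to prove, so invoking them risks circularity --- or, cleaner, you argue structurally: $\mathfrak g$ is closed under brackets, $[\mathfrak p,\mathfrak p]\subseteq\mathfrak k$ by the Cartan decomposition, and $\mathfrak k$ consists of \emph{linear} vector fields since, by H.\ Cartan's theorem, an automorphism of a circled bounded domain fixing $0$ is linear; hence the degree-three part of $[\xi_u,\xi_v]$ must vanish identically and only your computed linear part survives. (ii) In b), the transformation law $\det B(gz,gw)=J_g(z)\det B(z,w)\overline{J_g(w)}$ \emph{is} the whole content of the statement, not a step one may assert; it can be assembled from the formulas the paper records (e.g.\ $d(\exp\xi_v)(z)=B(u,u)^{1/2}B(z,-u)^{-1}$) together with the linear action of $K$, and you also need that $K(z,w)\det B(z,w)$, being holomorphic in $z$, antiholomorphic in $w$, and constant on the diagonal by transitivity, is constant everywhere. (iii) In c), invariance only gives $h_z(u,v)=h_0(B(z,z)^{-1/2}u,B(z,z)^{-1/2}v)$; to reach $h_0(B(z,z)^{-1}u,v)$ you need $B(z,z)$ to be self-adjoint with respect to $h_0$, which you never mention. (iv) In d), the fourth mixed derivative of $-\operatorname{tr}\log B(z,z)$ at $0$ equals $\operatorname{tr}[D(u,\overline v)D(w,\overline y)]+\operatorname{tr}[D(u,\overline y)D(w,\overline v)]-\operatorname{tr}[Q(u,w)\overline Q(\overline v,\overline y)]$, and identifying this with $\operatorname{tr}D(\{u\overline v w\},\overline y)$ is not ``by the definition of the triple product'': it requires the genuine Jordan triple commutation and trace identities. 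In short, the skeleton is the right one, but in each part the genuinely hard quarter of the argument is still missing.
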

We define odd powers of an element $x \in V$ by
\begin{eqnarray*}
x^1=x, \,\,x^3=Q(x)\overline{x}, \cdots, x^{2n+1}=Q(x)\overline{x^{2n-1}}.
\end{eqnarray*}

An element $x \in V$ is said to be \emph{tripotent} if $x^3=x$, i.e., if 
$\{x\overline{x}x\}=2x$.
Two tripotents $c$ and $e$ are called orthogonal if $D(c,\overline{e})=0$. 
In this case $D(c, \overline{c})$ and $D(e,\overline{e})$ commute
and $e+c$ is a tripotent. 

Every $x\in V$ can be written uniquely
\begin{eqnarray*}
x=\lambda_1c_1+ \cdots +\lambda_n c_n,
\end{eqnarray*}
where the $c_i$ are pairwise orthogonal nonzero tripotents which are real 
linear combinations of odd powers of $x$, and the $\lambda_i$ satisfy
\begin{eqnarray*}
0 < \lambda_1 < \cdots <\lambda_n.
\end{eqnarray*}
This expression for $x$ is called its \emph{spectral decomposition} and 
the $\lambda_i$ the eigenvalues of $x$.
Moreover, the domain $\mathcal{D}$
can be realised as the unit ball in $V$ with the spectral norm
\begin{eqnarray*}
\|x\|=\max |\lambda_i|,
\end{eqnarray*}
where the $\lambda_i$ are the eigenvalues of $x$, i.e.,
\begin{eqnarray*}
\mathcal{D}=\{x \in V | \|x\| < 1\}.
\end{eqnarray*}
Let $f(t)$ be an odd complex valued function of the real variable $t$, 
defined for $|t|<\rho$. For every $x \in V$ with $|x|<\rho$ we define
$f(x) \in V$ by
\begin{eqnarray}
f(x)=f(\lambda_1)c_1+\cdots +f(\lambda_n)c_n,
\end{eqnarray}
where $x=\lambda_1c_1+\cdots +\lambda_nc_n$ is the spectral resolution of 
$x$.
This functional calculus is used in expressing the action on $\mathcal{D}$ 
of the elements $\exp \xi_v$ in $G$:
\begin{eqnarray}
\exp \xi_v(z)=u+B(u,u)^{1/2}B(z,-u)^{-1}(z+Q(z)\overline{u})
\end{eqnarray}
and
\begin{eqnarray}
d(\exp \xi_v)(z)=B(u,u)^{1/2}B(z,-u)^{-1},
\end{eqnarray}
where $u=\tanh v$, for $v \in \mathbb{C}^n$ and $z\in \mathcal{D}$.
\end{section}

\begin{section}{The real part of the Lie ball}
We consider the non-degenerate quadratic form
\begin{eqnarray}
q(z)=z_1^2+\cdots +z_n^2
\end{eqnarray}
on $V=\mathbb{C}^n$. In the following we will often denote $q(z,w)$ by 
$(z,w)$. Defining $Q(x)y=q(x,y)x-q(x)y$, where $q(x,y)=q(x+y)-q(x)-q(y)$,
we get a Jordan triple system.
The  Lie ball $\mathscr{D}=\{z \in \mathbb{C}^n | 1-2\langle z,z \rangle + 
|zz^t|^2 > 0, |z|<1\}$ is the open unit
ball in this Jordan triple system.
An easy computation shows the following identity.
\begin{eqnarray*}
D(x,\overline{y})z=2(\sum_{k=1}^nx_k\overline{y_k})z+2(\sum_{k=1}^nz_k\overline{y_k})x-2(\sum_{k=1}^nx_kz_k)\overline{y}\\
\end{eqnarray*}
Recalling that 
$B(x,y)=I-D(x,\overline{y})+Q(x)\overline{Q}(\overline{y})$.
The Bergman kernel of $\mathscr{D}$ is
\begin{eqnarray}
K(z,w)=(1-2\langle z, w \rangle + (zz^t)\overline{(ww^t)})^{-n}.
\end{eqnarray}
We will hereafter denote it by $h(z,w)^{-n}$.
Consider the real form $\mathbb R^n$ in $\mathbb C^n$. Observe that 
$$\mathscr{X}:=\mathscr{D} \bigcap \mathbb{R}^n$$ is the unit ball of $\mathbb{R}^n$.
On $\mathscr{X}$ we have a simple expression for the Bergman metric:
\begin{eqnarray}
B(x,x)=(1-|x|^2)^{-2}I, x \in \mathscr{X}. \label{bergman}
\end{eqnarray}
The submanifold $\mathscr{X}$ is a totally real form of 
$\mathscr{D}$ in the sense that
\begin{eqnarray*}
T_x(\mathscr{X})+iT_x(\mathscr{X})=T_x(\mathscr{D}),\,\,
T_x(\mathscr{X}) \bigcap iT_x(\mathscr{X})=\{0\}
\end{eqnarray*}
This implies that every holomorphic function on $\mathscr{D}$ that 
vanishes on $\mathscr{X}$ is identically zero. 
We define the subgroup $H$ as the identity component of
\begin{eqnarray*}
\{h\in G |h(x) \in \mathscr{X} \,\text{if} \,x\in \mathscr{X}\}
\end{eqnarray*}
We will denote $H \bigcap K$ by $L$.

Using the fact that the real form $\mathbb{R}^n$ is a sub-triple system of $\mathbb{C}^n$, one can show that
$\mathscr{X}$ is a totally geodesic submanifold of $\mathscr{D}$ (cf Loos \cite{loos}). 
Hence we can describe $\mathscr{X}$
as a symmetric space $$\mathscr{X} \cong H/L.$$

We now study the image of $\mathscr{X}$ in the $M_{n2}(\mathbb{R})$- model 
of the Lie ball. For computational convenience, we now work with the 
transposes of these matrices.
The defining equation of the Hua-transform can be written as
\begin{eqnarray}
\frac{1}{2}
\left( \begin{array}{cc}
zz^t+1 & i(zz^t-1)\\
\overline{z}\overline{z}^t+1 & -i( \overline{z}\overline{z}^t-1)
\end{array} \right)
Z=\left( \begin{array}{c}
          z\\
   \overline{z}
    \end{array} \right)
\end{eqnarray}
In the coordinates $(z_1, \ldots, z_n)$ of $z$, this identity takes the form
\begin{eqnarray}\label{hua1}
z_k=\frac{1}{2}(\;(zz^t+1)x_k + i(zz^t-1)y_k).
\end{eqnarray}
This gives
\begin{eqnarray}
4zz^t=(zz^t)^2(X+iY)(X+iY)^t+2(XX^t+YX^t)zz^t\\
+(X-iY)(X-iY)^t,
\end{eqnarray}
which is a quadratic equation in $zz^t$ with unique solution
\begin{eqnarray}\label{hua2}
&&zz^t\\
&&=\frac{2-(XX^t+YY^t)-2\sqrt{(1-XX^t)(1-YY^t)-(YX^t)^2}}{(X+iY)(X+iY)^t}.\nonumber
\end{eqnarray}
From \eqref{hua1} we see that if $z$ is real, then $y_k=0$ for all $k$. On 
the other hand, if $Y=0$, then \eqref{hua2} shows that $zz^t$ is
real and therefore $z$ is real by \eqref{hua1}.
Hence the image of the real part $\mathscr{X} \subset \mathscr{D}$ under 
the Hua-transform is the set
\begin{eqnarray}
\mathcal{H}(\mathscr{X})=\{Z=\left(X \,0\right) | X \in 
M_{n1}(\mathbb{R}), |X|<1\},
\end{eqnarray}
since for an element $Z=\left(X \,0\right)$, the condition that $Z^tZ < I_2$ is 
clearly equivalent with $|X| < 1$.

Recall that the real $n$-dimensional unit ball can be described as a 
symmetric space $SO_0(1,n)/SO(n)$ by a procedure analogous to the one in 
the first section. One first considers all lines in $\mathbb{R}^{1+n}$ on 
which the quadratic form $x_1^2-x_2^2- \cdots -x_{n+1}^2$ is positive 
definite
and identifies these lines with all real $n \times 1$-matrices with norm 
less than one.
If we write elements $g \in SO(1,n)$ as matrices of the form
\begin{eqnarray}
g=\left(\begin{array}{cccc}
a & - & b & -\\
| & & &\\
c & & D &\\
|& & & \\ \end{array}\right),
\end{eqnarray}
the action is given by
\begin{eqnarray}
X \mapsto (c+DX)(a+bX)^{-1}.
\end{eqnarray}
The group $SO(1,n)$ can be embedded into $SO(2,n)$. Indeed, the equality
\begin{multline*}
\left(\begin{array}{ccccc}
a& 0 & - & b & -\\
0 & 1 & - & 0 & -\\
| & | &   &   &  \\
c & 0 &   & D &  \\
| & | &   &   &  \\ \end{array}
\right)
\left(\begin{array}{ccccc}
a'  & 0 & - & b'  & -\\
0  & 1& - & 0 & -\\
|  & | &   &    &  \\
c' & 0 &   & D' &  \\
|  & | &   &    &  \\ \end{array}
\right)\\\\
= \left(\begin{array}{ccccc}
aa'+bc'  & 0 & - & ab'+bD' & -\\
0  & 1 & - & 0 & -\\
| & | &   &   &  \\
ca'+Dc' & 0 &   & cb'+DD' & \\
| & | & & &  \\ \end{array}
\right)
\end{multline*}
shows that we can define an injective homomorphism $\theta: SO(1,n) 
\rightarrow SO(2,n)$ by
\begin{eqnarray*}
\theta:
\left(\begin{array}{cccc}
a & - & b & -\\
| & & &\\
c & & D &\\
|& & & \\ \end{array}\right)
\mapsto
\left(\begin{array}{ccccc}
a& 0 & - & b & -\\
0 & 1 & - & 0 & -\\
| & | &   &   &  \\
c & 0 &   & D &  \\
| & | &   &   &  \\ \end{array}.
\right)
\end{eqnarray*}
This subgroup acts on $\mathcal{H}(\mathscr{X})$ as
\begin{eqnarray*}
\left(X \, 0 \right) \mapsto \left((c+DX)(a+bX)^{-1} \, \,0\right)
\end{eqnarray*}
and the action is transitive.
Suppose now that $h \in SO(2,n)$ preserves $H(\mathscr{X})$. Let $p=h(0)$. 
We can choose a $g \in SO_0(1,n)$ such that $g(0)=p$ (here we identify $g$ 
with
$\theta(g)$). Then $g^{-1}h(0)=0$ and hence we can write it in block form 
as
\begin{eqnarray*}
g^{-1}h=\left( \begin{array}{cc}
I_2 & 0\\
0 & D\\
\end{array}\right),
\end{eqnarray*}
with $D \in SO(n)$. This is an element in $\theta(SO(1,n))$ and hence $h 
\in \theta(SO(1,n))$.
We have now proved the following theorem.

\begin{thm}
The Hua transform $\mathcal{H}:\mathscr{D} \rightarrow M$ maps the real 
part $\mathscr{X}$ diffeomorphically onto
\begin{eqnarray}
\mathcal{H}(\mathscr{X})=\{Z=\left(X \,0\right) | X \in 
M_{n1}(\mathbb{R}), |X|<1\}
\end{eqnarray}
by $x \mapsto \frac{2x}{1+|x|^2}$.
Moreover, the induced group homomorphism $h \mapsto \mathcal{H}\, h\, 
\mathcal{H}^{-1}$ is an isomorphism between
the groups $H$ and $SO_0(1,n)$
\end{thm}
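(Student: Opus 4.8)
The statement combines the explicit description of $\mathcal{H}|_{\mathscr{X}}$ with the identification $H \cong SO_0(1,n)$; almost all of the computations needed have already been carried out above, so the plan is to assemble them and to supply the one genuinely new ingredient, a connectedness argument for the second assertion. Throughout I use that, via the Hua diffeomorphism, an element of $G$ preserves $\mathscr{X}$ (for the action on $\mathscr{D}$) if and only if it preserves $\mathcal{H}(\mathscr{X})$ (for the action on $M$); thus $H$ is the identity component of the stabiliser $\widetilde{H}:=\{h\in G : h\,\mathcal{H}(\mathscr{X})=\mathcal{H}(\mathscr{X})\}$, and the conjugation $h\mapsto \mathcal{H}h\mathcal{H}^{-1}$ merely realises the $M$-action of these elements.

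First I would establish the formula and the diffeomorphism. Restricting \eqref{hua1} to a real point $z\in\mathscr{X}$, so that $zz^t=|z|^2=:s\in[0,1)$ is real, the left-hand side is real; comparing imaginary parts and using $s\neq 1$ forces the second column of $Z=\mathcal{H}(z)$ to vanish, while comparing real parts gives the first column as $\tfrac{2z}{1+|z|^2}$. This is exactly the asserted map $x\mapsto \tfrac{2x}{1+|x|^2}$. For the reverse inclusion I would read \eqref{hua2} with $Y=0$: its right-hand side is then manifestly real, so $zz^t$ is real and \eqref{hua1} shows $z$ is real. Hence $\mathcal{H}(\mathscr{X})=\{(X\,0):|X|<1\}$ exactly, and since $\mathcal{H}$ is a global holomorphic diffeomorphism of $\mathscr{D}$ onto $M$ by the theorem of Hua recalled in Section~1, its restriction to the submanifold $\mathscr{X}$ is a diffeomorphism onto this set.

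For the isomorphism I would first record that the block-matrix identity preceding the definition of $\theta$ shows $\theta$ is a homomorphism; it is visibly injective and carries the connected group $SO_0(1,n)$ into $G$. The action computation already performed shows that $\theta(SO_0(1,n))$ preserves $\mathcal{H}(\mathscr{X})$ and acts on it by $(X\,0)\mapsto((c+DX)(a+bX)^{-1}\,0)$, i.e.\ by the standard transitive action of $SO_0(1,n)$ on the real unit ball. Being connected and containing the identity, $\theta(SO_0(1,n))\subseteq\widetilde{H}$, and in fact $\theta(SO_0(1,n))\subseteq H=\widetilde{H}_0$.

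The reverse inclusion $H\subseteq\theta(SO_0(1,n))$ is the crux, and the place where I expect the real work. Since $\theta(SO_0(1,n))\subseteq\widetilde{H}$ already acts transitively on $\mathcal{H}(\mathscr{X})$, the stabiliser of the origin in $\widetilde{H}$ is $\widetilde{H}\cap K$, which I would compute explicitly: an element $\mathrm{diag}(A,D)$ of $K\cong SO(2)\times SO(n)$ acts by $Z\mapsto DZA^{-1}$, so it sends $(X\,0)$ to the matrix with first and second columns $p\,DX$ and $q\,DX$, where $\left(\begin{smallmatrix}p&q\\ r&s\end{smallmatrix}\right)=A^{-1}$; requiring the second column to stay zero for every $X$ forces $q=0$, and an element of $SO(2)$ with vanishing off-diagonal entry is $\pm I_2$. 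Thus $\widetilde{H}\cap K=\{\pm I_2\}\times SO(n)$. The appearance of $A=-I_2$ (equivalently the symmetry $z\mapsto -z$, which does preserve $\mathscr{X}$) is precisely why the full stabiliser $\widetilde{H}$ is disconnected and why the identity-component hypothesis in the definition of $H$ is indispensable: any rotation path joining $I_2$ to $-I_2$ in the first factor leaves $\widetilde{H}$. I would close the argument cleanly by a dimension count rather than by tracing components directly: from transitivity, $\dim\widetilde{H}=\dim(\widetilde{H}\cap K)+\dim\mathcal{H}(\mathscr{X})=\tfrac{n(n-1)}{2}+n=\tfrac{n(n+1)}{2}=\dim SO_0(1,n)$. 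Therefore $\theta(SO_0(1,n))$ is a connected subgroup of $\widetilde{H}$ of full dimension, hence open in $\widetilde{H}$ and equal to its identity component $H$. Since $\theta$ is an injective homomorphism of the connected group $SO_0(1,n)$, this yields $H=\theta(SO_0(1,n))\cong SO_0(1,n)$, and the induced map $h\mapsto\mathcal{H}h\mathcal{H}^{-1}$ is the asserted isomorphism.
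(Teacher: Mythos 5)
Your proposal is correct, and its first two thirds (extracting $\mathcal{H}(\mathscr{X})$ and the map $x\mapsto 2x/(1+|x|^2)$ from \eqref{hua1} and \eqref{hua2}, the homomorphism $\theta$, and the transitivity of $\theta(SO_0(1,n))$ on $\mathcal{H}(\mathscr{X})$) coincides with the paper's proof. You part ways at the final identification, and there your argument is in fact the more careful one. The paper finishes by taking $h$ preserving $\mathcal{H}(\mathscr{X})$, choosing $g\in SO_0(1,n)$ with $\theta(g)(0)=h(0)$, and asserting that $\theta(g)^{-1}h=\mathrm{diag}(I_2,D)$ with $D\in SO(n)$, thereby concluding that \emph{every} such $h$ lies in $\theta(SO(1,n))$. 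As your computation of the stabiliser $\widetilde H\cap K=\{\pm I_2\}\times SO(n)$ shows, that assertion is too strong: the element $\mathrm{diag}(-I_2,I_n)$ lies in $K\subset G$, preserves $\mathcal{H}(\mathscr{X})$ (it acts by $Z\mapsto -Z$, i.e.\ $z\mapsto -z$ on $\mathscr{D}$), yet is not in $\theta(SO(1,n))$, since every element of $\theta(SO(1,n))$ fixes $e_2$. So the paper's intermediate claim about the full stabiliser is false as stated --- the stabiliser is disconnected, exactly as you observe --- and the theorem survives only because it concerns the identity component $H$. Your dimension count ($\dim\widetilde H=\tfrac{n(n-1)}{2}+n=\dim SO_0(1,n)$, so the connected full-dimensional subgroup $\theta(SO_0(1,n))$ is open in $\widetilde H$ and hence equals $H$) closes precisely this gap; an equally quick alternative, closer to the paper's line of argument, is to note that $\widetilde H$ is the disjoint union of the two cosets $\theta(SO_0(1,n))$ and $\theta(SO_0(1,n))\,\mathrm{diag}(-I_2,I_n)$, distinguished by whether $e_2$ is fixed or sent to $-e_2$, each open and closed in $\widetilde H$, whence the identity component is $\theta(SO_0(1,n))$.
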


\begin{remark}
The model $\mathcal{H}(\mathscr{X})$ of $SO_0(1,n)/SO(n)$ is the real part 
of the complex $n$-dimensional unit ball $SU(1,n)/SU(n)$ with fractional- 
linear group action. It is therefore equipped with a Riemannian metric 
given by the restriction of the Bergman metric of the complex unit ball.
If $x \in  \mathcal{H}(\mathscr{X}), x \neq 0$, we decompose 
$\mathbb{R}^n=\mathbb{R}x \oplus (\mathbb{R}x)^{\perp}$. We let
$v=v_x+v_{x^{\perp}}$ be the corresponding decomposition of a tangent 
vector $v$ at $x$.
In this model, the Riemannian metric at $x$ is (cf \cite{rudinft})
\begin{eqnarray*}
g_x(v,v)=\frac{|v_x|^2}{(1-|x|^2)^2}+\frac{|v_{x^{\perp}}|^2}{(1-|x|^2)}.
\end{eqnarray*}
We recall from equation \eqref{bergman} that if $x \in X$, then the 
Riemannian metric at $x$ is
\begin{eqnarray*}
h_x(v,v)=\frac{1}{2n}\frac{|v|^2}{(1-|x|^2)^2}.
\end{eqnarray*}
The Hua transform thus induces an isometry (up to a constant) of the real 
$n$-dimensional unit ball equipped with two different Riemannian 
structures.
\end{remark}

\subsection{Iwasawa decomposition of $\mathfrak{h}$}
The Cartan decomposition $\mathfrak{g}=\mathfrak{k} \oplus \mathfrak{p}$ 
induces a decomposition
$\mathfrak{h}=\mathfrak{l} \oplus \mathfrak{q}$.
We let $$\mathfrak{a}=\mathbb{R}\xi_e$$ be the one-dimensional subspace of $\mathfrak{q}$, where
$e=e_1$ denotes the first standard basis vector and the corresponding vector field $\xi_e$ is defined in 
\eqref{vectorfield}.
\begin{prop}
The Lie algebra $\mathfrak{h}$ has rank one, and the roots with respect to 
the abelian subalgebra $\mathfrak{a}$
of $\mathfrak{q}$ are
$\{\alpha,-\alpha\}$, where $\alpha(\xi_{e})=2$. The corresponding
posive root space is
\begin{eqnarray*}
\mathfrak{q}_{\alpha}=\{\xi_v+\frac{1}{2}(D(e,v)-D(v,e)) | v \in 
\mathbb{R}e_2 \oplus \cdots \oplus \mathbb{R}e_n\}
\end{eqnarray*}
\end{prop}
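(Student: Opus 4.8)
The plan is to realise $\mathfrak{h}=\mathfrak{l}\oplus\mathfrak{q}$ concretely inside $\mathfrak{g}$ and then to diagonalise $\operatorname{ad}(\xi_e)$ by hand, using only the two structural identities of part a) of the Proposition in Section 2 together with the explicit formula for $D(x,\overline{y})$ recorded at the start of Section 3. First I would pin down the two summands. Since $\mathscr{X}=\mathscr{D}\cap\mathbb{R}^n$ is the fixed-point set of the conjugation $\sigma\colon z\mapsto\overline{z}$, the group $H$ is the identity component of the fixed points of $g\mapsto\sigma g\sigma$; differentiating, the induced involution $\tau$ on $\mathfrak{g}$ commutes with the Cartan involution and acts on $\mathfrak{p}=\{\xi_v\mid v\in\mathbb{C}^n\}$ by $\xi_v\mapsto\xi_{\overline{v}}$, whence $\mathfrak{q}=\mathfrak{p}^{\tau}=\{\xi_v\mid v\in\mathbb{R}^n\}$ and $\mathfrak{l}=[\mathfrak{q},\mathfrak{q}]$. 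For real $u,v$ the formula for $D(x,\overline{y})$ collapses (the Hermitian pairings become the real dot product and $\overline{v}=v$), and a direct substitution gives
\[
A_{u,v}:=[\xi_u,\xi_v]=D(u,v)-D(v,u),\qquad A_{u,v}z=4(z\cdot v)u-4(z\cdot u)v,
\]
so that $\mathfrak{l}=\{A_{u,v}\}$ is exactly the space of skew-symmetric endomorphisms of $\mathbb{R}^n$, i.e. $\mathfrak{l}\cong\mathfrak{so}(n)$, consistent with the realisation $\mathscr{X}\cong H/L$.

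For the rank-one assertion I observe that $A_{u,v}=0$ precisely when $u$ and $v$ are proportional, so every abelian subspace of $\mathfrak{q}$ is one-dimensional; hence $\mathfrak{a}=\mathbb{R}\xi_e$ is maximal abelian in $\mathfrak{q}$ and $\mathfrak{h}$ has rank one.

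The heart is diagonalising $\operatorname{ad}(\xi_e)$. Take $e=e_1$, so $e\cdot e=1$, and let $v\in\mathbb{R}e_2\oplus\cdots\oplus\mathbb{R}e_n=e^{\perp}$. The first identity gives $[\xi_e,\xi_v]=A_{e,v}$, while the second identity $[l,\xi_u]=\xi_{lu}$ together with $A_{e,v}e=4(e\cdot v)e-4\lvert e\rvert^{2}v=-4v$ gives
\[
[\xi_e,A_{e,v}]=-\xi_{A_{e,v}e}=4\xi_v .
\]
Thus $\operatorname{ad}(\xi_e)^2=4$ on $\operatorname{span}\{\xi_v,A_{e,v}\}$, its eigenvalues are $\pm2$, and $\xi_v+\tfrac12 A_{e,v}$ is a $+2$-eigenvector while $\xi_v-\tfrac12 A_{e,v}$ is a $-2$-eigenvector. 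This at once yields $\alpha(\xi_e)=2$ and identifies $\mathfrak{q}_{\alpha}$ with the stated set as $v$ ranges over $e^{\perp}$.

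Finally I would check completeness. The centraliser of $\mathfrak{a}$ is $\mathfrak{a}\oplus\mathfrak{m}$ with $\mathfrak{m}=\{A_{u,w}\mid u,w\in e^{\perp}\}\cong\mathfrak{so}(n-1)$, these being exactly the $A_{u,w}$ annihilating $e$. A dimension count $(n-1)+(n-1)+1+\binom{n-1}{2}=\binom{n+1}{2}=\dim\mathfrak{so}(1,n)$ then forces $\mathfrak{h}=\mathfrak{q}_{-\alpha}\oplus(\mathfrak{a}\oplus\mathfrak{m})\oplus\mathfrak{q}_{\alpha}$, so $\{\pm\alpha\}$ are the only nonzero roots. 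The computation itself is short; the one place demanding care is the identification in the first step, namely that $\mathfrak{q}$ is \emph{precisely} the real vector fields $\xi_v$ (equivalently, that the embedding $\mathfrak{h}\hookrightarrow\mathfrak{g}$ of the previous theorem matches the conjugation involution and that $\mathbb{R}^n$ is the relevant sub-triple), since everything downstream rests on the relations $e\cdot v=0$ and $e\cdot e=1$ inside the collapsed $D$-formula.
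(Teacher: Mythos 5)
Your proposal is correct and is in essence the paper's own argument: both proofs rest on the two bracket identities $[\xi_u,\xi_v]=D(u,\overline{v})-D(v,\overline{u})$ and $[l,\xi_u]=\xi_{lu}$, the explicit formula for $D$ on real vectors, and the resulting computation $D(e,e)v-D(e,v)e=4v-4(e,v)e$, which is exactly what forces $\alpha(\xi_e)^2=4$ and $v\perp e$ (your identification of $\mathfrak{q}$ with $\{\xi_v\mid v\in\mathbb{R}^n\}$ is a point the paper leaves implicit, and is a welcome addition). The only difference is organisational: the paper solves the eigenvector equation $[\xi_e,\xi_v+l]=\alpha(\xi_e)(\xi_v+l)$ componentwise, so necessity of the eigenvalue and of the form of the root vector (hence completeness of $\{\pm\alpha\}$) come out in one stroke, whereas you exhibit the eigenvectors $\xi_v\pm\tfrac{1}{2}\left(D(e,v)-D(v,e)\right)$ directly and then close the argument with a dimension count --- an equally valid bookkeeping of the same computation.
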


\begin{proof}
This is known in a general context, but we give here an elementary 
proof.

Take $u$ and $v$ in $\mathbb{R}^n$ and assume that $[\xi_u,\xi_v]=0$. 
Then, for any $x \in \mathbb{R}^n$ we have
\begin{eqnarray*}
D(u,v)x=D(v,u)x.
\end{eqnarray*}
A simple calculation shows that this amounts to
\begin{eqnarray*}
(u,x)v=(v,x)u,
\end{eqnarray*}
which can only hold for all real $x$ if $u=v$.

Thus $\mathfrak{a}$ is a maximal abelian subalgebra in $\mathfrak{q}$.
The vector $e$ is a maximal tripotent in the Jordan triple system 
corresponding to $\mathscr{D}$.
Suppose that $[\xi_e,\xi_v+l]=\alpha(\xi_e)(\xi_v+l)$. Identifying the 
$q$- and $l$-components yields
\begin{eqnarray}
D(e,v)-D(v,e)=\alpha(\xi_e)l \label{l}\\
-\xi_{le}=\alpha(\xi_e)\xi_v \label{q}
\end{eqnarray}
From \eqref{q} it follows that $le=-\alpha(\xi_e)v$ and, thus, applying 
both sides of $\eqref{l}$ to $e$ gives
\begin{eqnarray*}
D(e,v)e-D(v,e)e=-\alpha(\xi_e)^2v,
\end{eqnarray*}
i.e.,
\begin{eqnarray*}
D(e,e)v-D(e,v)e=\alpha(\xi_e)^2v,
\end{eqnarray*}
An easy computation gives
\begin{eqnarray*}
4v-4(e,v)e=\alpha(\xi_e)^2v.
\end{eqnarray*}
Hence $e$ is orthogonal to $v$ and $\alpha(\xi_e)^2=4$. The rest follows immediately.
\end{proof}
We shall fix the positive root $\alpha$. Elements in
$\mathfrak{a}_{\mathbb{C}}^*$ are of the form $\lambda \alpha$ and
will hereafter be identified with the complex numbers $\lambda$. In
particular, the half sum of the positive roots (with
multiplicities), $\rho$, will be identified with the number
$(n-1)/2$.

\subsection{The Cayley transform}
The Cayley transform is a biholomorphic mapping from a bounded symmetric 
domain onto a \emph{Siegel domain}. We describe it for the domain 
$\mathscr{D}$
and use it to express the spherical functions on $\mathscr{X}$ in terms of 
the spherical functions on the unbounded domain.
We fix the maximal tripotent $e$. Then $\mathbb{C}^n$ equipped with the 
bilininear mapping
\begin{eqnarray}
(z,w) \mapsto z \circ w=\frac{1}{2}\{z e w\}
\end{eqnarray}
is a complex Jordan algebra. Observe that since $e$ is a tripotent, it is 
a unity for this multiplication.
The Cayley transform is the mapping $c: \mathbb{C}^n \rightarrow 
\mathbb{C}^n$ defined by
\begin{eqnarray}
c(z)=(e+z)\circ(e-z)^{-1},
\end{eqnarray}
where $(e-z)^{-1}$ denotes the inverse of $(e-z)$ with respect to the 
Jordan product.
\begin{prop}
The Cayley transform is given by the formula
\begin{eqnarray}
c(z)=\frac{1-zz^t}{1-2z_1+(zz^t)^2}e+\frac{2z'}{1-2z_1+(zz^t)^2},\label{cayley}
\end{eqnarray}
for $z=(z_1,z')=z_1e+z'\in \mathscr{D}.$ Moreover, it maps $\mathscr{X}$ onto the 
halfspace
\begin{eqnarray*}
\{(x_1, \ldots, x_n\}\in \mathbb{R}^n|x_1>0\}.
\end{eqnarray*}
\end{prop}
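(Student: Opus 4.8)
The plan is to reduce everything to the explicit Jordan product and then carry out a short rational-function computation. First I would make the multiplication $z\circ w=\tfrac12\{zew\}$ completely explicit. Applying the displayed formula for $D(x,\overline y)z$ with $y=e=e_1$ (so that $\overline e=e$ and $\sum_k x_k\overline{e_k}=x_1$) gives
\[
z\circ w=z_1w+w_1z-(zw^t)\,e,
\]
where $zw^t=\sum_k z_kw_k$. Writing $z=z_1e+z'$ with $z'$ orthogonal to $e$, and $w=w_1e+w'$ likewise, this separates into an $e$-component $z_1w_1-z'w'^t$ and a component $z_1w'+w_1z'$ orthogonal to $e$. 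In particular $z\circ e=z$, confirming that $e$ is the unit.

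The decisive step is to compute $(e-z)^{-1}$. For this I would introduce the conjugate $\overline a=a_1e-a'$ of $a=a_1e+a'$ and note from the product formula that
\[
a\circ\overline a=(a_1^2+a'a'^t)\,e=(aa^t)\,e .
\]
Hence, whenever $aa^t\neq0$, the element $\overline a/(aa^t)$ lies in the commutative associative subalgebra generated by $a$ and satisfies $a\circ(\overline a/(aa^t))=e$; checking in addition $a^2\circ(\overline a/(aa^t))=a$ identifies it as the Jordan inverse, so $a^{-1}=\overline a/(aa^t)$. Taking $a=e-z$ (invertible for $z\in\mathscr D$) yields
\[
(e-z)^{-1}=\frac{(1-z_1)e+z'}{1-2z_1+zz^t}.
\]
Substituting into $c(z)=(e+z)\circ(e-z)^{-1}$ and applying the product formula, the $e$-component collapses to $(1-zz^t)/(1-2z_1+zz^t)$ and the orthogonal component to $2z'/(1-2z_1+zz^t)$, which is formula \eqref{cayley}; the denominator is exactly the Jordan norm $N(e-z)=1-2z_1+zz^t$.

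For the second assertion I would restrict to real $z=x\in\mathscr X$, where $zz^t=|x|^2<1$. The first coordinate of $c(x)$ is then $(1-|x|^2)/((1-x_1)^2+|x'|^2)$, with numerator and denominator both strictly positive, so $c(\mathscr X)\subset\{x_1>0\}$. Surjectivity I would get by exhibiting the inverse: solving $w\circ(e-z)=e+z$ inside the subalgebra generated by $e,z,w$ gives $z=(w-e)\circ(w+e)^{-1}$, and using multiplicativity of $N$ one finds, for real $w$,
\[
|z|^2=\frac{N(w-e)}{N(w+e)}=\frac{(w_1-1)^2+|w'|^2}{(w_1+1)^2+|w'|^2},
\]
which is $<1$ if and only if $w_1>0$. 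Thus $x\mapsto c(x)$ is a bijection of $\mathscr X$ onto $\{x_1>0\}$.

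The main obstacle is the Jordan inversion $(e-z)^{-1}$: once the identity $a\circ\overline a=(aa^t)e$ is established, both halves of the proposition reduce to bookkeeping, the substitution for the formula and the norm-quotient computation for the onto statement, whose only real content is the equivalence $|(w-e)\circ(w+e)^{-1}|<1\iff w_1>0$.
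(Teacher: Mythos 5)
Your derivation of the formula itself follows essentially the same route as the paper: the paper computes the Jordan inverse by solving the linear system coming from $e=\tfrac{1}{2}D(x,e)z$, which is exactly your identity $a\circ\overline{a}=(aa^t)e$ repackaged, and both arrive at the same expression $(e-z)^{-1}=\bigl((1-z_1)e+z'\bigr)/\bigl(1-2z_1+zz^t\bigr)$, after which the formula is a substitution. One thing you should flag rather than gloss over: your computation (correctly) produces the denominator $1-2z_1+zz^t$, which is \emph{not} literally the printed formula $1-2z_1+(zz^t)^2$; the printed exponent is evidently a misprint, since your version is the one consistent with Corollary \ref{cayleycor}, whose integrand has denominator $1-2(x,\zeta)+xx^t$, and the paper's own proof yields the same expression as yours. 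So your "which is formula \eqref{cayley}" is right in substance but hides a discrepancy with the statement as printed.

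For the second assertion your argument genuinely differs from the paper's. The paper observes that $c^{-1}(w)=(w-e)\circ(w+e)^{-1}=-c(-w)$ preserves $\mathbb{R}^n$, so that $c(\mathscr{X})=c(\mathscr{D})\cap\mathbb{R}^n$, and then invokes Loos's general description $c(\mathscr{D})=\{u+iv\,|\,u\in A^+,\ v\in A\}$ of the Siegel domain, computing the real Jordan algebra $A=\mathbb{R}e\oplus\mathbb{R}ie_2\oplus\cdots\oplus\mathbb{R}ie_n$ and its cone of squares to identify the real points with $\{x_1>0\}$. You instead prove both inclusions by hand via the inverse map, which is more elementary and self-contained (no appeal to \cite{loos}), and it checks out. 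The one caveat you must state precisely: the norm $N(a)=aa^t$ is \emph{not} multiplicative on the whole Jordan algebra --- for instance $e_2\circ e_3=0$ while $N(e_2)N(e_3)=1$ --- but only on the associative subalgebra generated by $e$ and a single element, where it is the norm form of a quadratic algebra (diagonalize via the roots of $X^2-2w_1X+ww^t$). Since $w\pm e$, $(w+e)^{-1}$ and their product all lie in the subalgebra generated by $e$ and $w$, your quotient formula $|z|^2=N(w-e)/N(w+e)$ is legitimate; alternatively it can be checked directly from the identity $(ww^t-1)^2+4|w'|^2=\bigl(1+ww^t\bigr)^2-4w_1^2$. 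As written, though, "multiplicativity of $N$" is a false general statement, so restrict it explicitly to that subalgebra.
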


\begin{proof}
We first find the inverse for an element $x$. Suppose therefore that 
$e=\frac{1}{2}\{xez\}=\frac{1}{2}D(x,e)z$,   i.e.,
\begin{eqnarray*}
e=(x,e)z+(z,e)x-(x,z)e=x_1z+z_1x-(x,z)e
\end{eqnarray*}
Identifying coordinates gives
\begin{eqnarray*}
1=2x_1z_1-(x,z)\label{1koord}\\
0=x_1z'+z_1x' \label{koord2}
\end{eqnarray*}
These equations have the solution
\begin{eqnarray*}
z_1&=&x_1/(x,x)\\
z'&=&-x'/(x,x).
\end{eqnarray*}
If we apply this to the expression $(e-z)^{-1}$ in the definition of $c$, 
we get
\begin{eqnarray*}
(e-z)^{-1}=\frac{1-z_1}{(1-z_1)^2+(z',z')}e+\frac{z'}{(1-z_1)^2+(z',z')}.
\end{eqnarray*}
Now the formula \eqref{cayley} follows by an easy computation. Moreover, 
we observe that the inverse transform is given by
\begin{eqnarray*}
w \mapsto (w-e)\circ(w+e)^{-1}=-c(-w).
\end{eqnarray*}
Hence both $c$ and $c^{-1}$ preserve $\mathbb{R}^n$ and therefore
\begin{eqnarray*}
c(\mathscr{X})=c(\mathscr{D}) \bigcap \mathbb{R}^n.
\end{eqnarray*}
We now determine $c(\mathscr{X})$.

From (\cite{loos}) we know that (since $e$ is a 
maximal tripotent)
\begin{eqnarray}
c(\mathscr{D})=\{u+iv |u \in A^+, v \in A\},
\end{eqnarray}
where $A$ is the real Jordan algebra $$\{z \in V|Q(e)\overline{z}=z\}$$ 
and $A^+$ is the positive cone $\{z \circ z | z \in A\}$ in $A$.
By a simple computation we see that $$A=\mathbb{R}e\oplus \mathbb{R}ie_2 
\oplus \cdots \oplus \mathbb{R}ie_n.$$
Since we have the identities $$z+Q(e)\overline{z}=2u,$$ 
$$z-Q(e)\overline{z}=2iv$$ and 
$$Q(e)\overline{z}=2\overline{z_1}-\overline{z},$$
we get expressions for $u$ and 
$v$:$$2u=(z_1+\overline{z_1},z_2-\overline{z_2}, 
\ldots,z_n-\overline{z_n})$$
$$2iv=(z_1-\overline{z_1},z_2+\overline{z_2},\ldots, z_n+\overline{z_n})$$
The condition that $x=u+iv$ be in the image of $\mathscr{X}$ thus implies that $$u=(x_1,0, \ldots, 
0),$$ $$iv=(0,x_2, \ldots, x_n).$$
Moreover we require that
\begin{eqnarray*}
u= w \circ w=2w_1w-(w,w)e,
\end{eqnarray*}
for some $$w=c_1e+c_2ie_2+ \cdots +c_n ie_n.$$
This yields $$(x_1, \ldots,0)=(c_1^2+ \cdots +c_n^2,ic_1c_2, \ldots, 
ic_1c_n).$$
Hence
\begin{eqnarray*}
c_1^2=x_1, c_2=\cdots=c_n=0,
\end{eqnarray*}
and thus $$u+iv=(c_1^2,x_2, \ldots, x_n).$$ This proves the claim.
\end{proof}
Recall the expression for the spherical functions on a symmetric space of 
noncompact type (cf \cite{helg2} Thm 4.3)
\begin{eqnarray*}
\varphi_{\lambda}(h)=\int_Le^{(i\lambda+\rho)A(lh)}dl,
\end{eqnarray*}
where $A(lh)$ is the (logarithm) of the $A$ part of $lh$ in the Iwasawa 
decomposition $H=NAL$. The integrand in this formula is called the
\emph{Harish-Chandra e-function}. For the above Siegel domain it has the 
form $e_{\lambda}(w)=(w_1)^{i\lambda+\rho}$ (cf \cite{untup}). Hence we 
have the
following corollary.
\begin{cor}\label{cayleycor}
The spherical function $\varphi_{\lambda}$ on $\mathscr{X}=H/L$ is
\begin{eqnarray}
\varphi_{\lambda}(x)=\int_{S^{n-1}}\left(\frac{1-|x|^2}{1-2(x,\zeta)+xx^t}\right)^{i\lambda+\rho}d\sigma(\zeta).
\end{eqnarray}
where $\sigma$ is the $O(n)$-invariant probability measure on $S^{n-1}$.
\end{cor}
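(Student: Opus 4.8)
The plan is to combine the Harish--Chandra integral formula recalled just above the statement with the explicit form of the Harish--Chandra $e$-function on the Siegel domain, to transport everything to $\mathscr{X}$ by means of the Cayley transform $c$ of the preceding proposition, and finally to rewrite the resulting integral over $L$ as an integral over the sphere $S^{n-1}$. Throughout I identify $\mathscr{X}$ with the real unit ball, I take $o=0$ as the base point, and I write $e=e_1$ for the fixed maximal tripotent. Since $\varphi_\lambda$ is right $L$-invariant it descends to a function of $x=h\cdot o\in H/L\cong\mathscr{X}$, which is what makes the notation $\varphi_\lambda(x)$ meaningful.

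First I would reinterpret the group-theoretic integrand geometrically. Because $N$ fixes the boundary point $c^{-1}(\infty)=e$ and preserves the height $w\mapsto w_1$ of the Siegel domain, while $A=\exp(\mathbb{R}\xi_e)$ scales $w_1$ and $L$ fixes $o$, every $g=nal\in H=NAL$ satisfies $g\cdot o=na\cdot o$, so that $c(g\cdot o)_1$ depends on $g$ only through its $A$-part. Matching the scaling of $w_1$ against the normalisations $\alpha(\xi_e)=2$ and $\rho=(n-1)/2$, together with the stated form $e_\lambda(w)=w_1^{i\lambda+\rho}$ of the $e$-function, should yield the identity $e^{(i\lambda+\rho)A(g)}=\bigl(c(g\cdot o)_1\bigr)^{i\lambda+\rho}$. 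Applying this with $g=lh$ and using $(lh)\cdot o=l\cdot(h\cdot o)=l\cdot x$, the Harish--Chandra formula becomes $\varphi_\lambda(x)=\int_L\bigl(c(l\cdot x)_1\bigr)^{i\lambda+\rho}\,dl$.

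Next I would compute $c(x)_1$ explicitly for $x\in\mathscr{X}$. Writing $c(x)=2(e-x)^{-1}-e$ and inserting the expression for $(e-x)^{-1}$ established in the proof of the Cayley proposition, a short computation gives $c(x)_1=\dfrac{1-|x|^2}{1-2x_1+|x|^2}$, i.e.\ the Poisson kernel of the ball at the boundary point $e$. Because $L=H\cap K\subset K$ acts linearly on the circled domain, each $l$ acts on $\mathscr{X}$ as an orthogonal rotation; hence $|lx|=|x|$ and $(lx)_1=(x,l^{-1}e)$, and therefore $c(l\cdot x)_1=\dfrac{1-|x|^2}{1-2(x,l^{-1}e)+|x|^2}$.

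Finally I would carry out the reduction to the sphere. The orbit map $l\mapsto l^{-1}e$ identifies $M\backslash L$ with $S^{n-1}$, where $M=Z_L(\mathfrak{a})\cong SO(n-1)$ is the stabiliser of $e$; since the integrand depends on $l$ only through $l^{-1}e$ and is left $M$-invariant, the normalised Haar measure $dl$ pushes forward to the $O(n)$-invariant probability measure $\sigma$ on $S^{n-1}$. Substituting $\zeta=l^{-1}e$ and recalling that $xx^t=|x|^2$ for real $x$ then gives the claimed formula. I expect the main obstacle to be the second paragraph: the clean bookkeeping that the abstract Iwasawa projection $A(g)$, once transported through $c$, is exactly the logarithm of the height $c(g\cdot o)_1$, with the exponents matching precisely because of the normalisations $\alpha(\xi_e)=2$ and $\rho=(n-1)/2$. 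The explicit computation of $c(x)_1$ and the $S^{n-1}$-reduction are then routine.
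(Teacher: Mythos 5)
Your proposal is correct and follows essentially the same route as the paper, which simply invokes the Harish--Chandra formula $\varphi_{\lambda}(h)=\int_L e^{(i\lambda+\rho)A(lh)}\,dl$ together with the cited form $e_{\lambda}(w)=w_1^{i\lambda+\rho}$ of the $e$-function on the Siegel domain and the preceding Cayley-transform proposition. You merely make explicit the details the paper leaves implicit: the computation $c(x)_1=\frac{1-|x|^2}{1-2x_1+|x|^2}$, the linear action of $L\cong SO(n)$, and the pushforward of Haar measure under $l\mapsto l^{-1}e$ to $\sigma$ on $S^{n-1}$.
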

\end{section}

\begin{section}{A family of unitary representations of $G$}

\subsection{The function spaces $\mathscr{H}_{\nu}$}
The Bergman space $\mathcal{H}^2(\mathscr{D})$ has the reproducing kernel 
$h(z,w)^{-n}$.
This means in particular that the function $h(z,w)^{-n}$ is positive 
definite in the sense that

\begin{eqnarray*}
\sum_{i,j=1}^m \alpha_i\overline{\alpha_j}h(z_i,z_j)^{-n} \geq 0,
\end{eqnarray*}
for all $z_1, \ldots, z_n \in \mathscr{D}$ and $\alpha_1, \ldots, \alpha_n 
\in \mathbb{C}.$
It has been proved by Wallach 
(\cite{wall})
and Rossi-Vergne (\cite{Rossi-V}) that $h(z,w)^{-\nu}$ is positive 
definite precisely when $\nu$
in the set
\begin{eqnarray*}
\{ 0,(n-2)/2\} \bigcup \left((n-2)/2, \infty \right)
\end{eqnarray*}
This set will also be referred as the \emph{Wallach set} (cf 
\cite{FKsymm}).
For $\nu$ in the Wallach set above, $h(z,w)^{-\nu}$ is the reproducing 
kernel of a Hilbert space of holomorphic
functions on $\mathscr{D}$. We will call this space $\mathscr{H}_{\nu}$ 
and the reproducing kernel $K_{\nu}(z,w)$.
The mapping $g \mapsto \pi_{\nu}(g)$, where

\begin{eqnarray*}
\pi_{\nu}(g)f(z)=J_{g^{-1}}(z)^{\frac{\nu}{n}}f(g^{-1}z)
\end{eqnarray*}
defines a unitary projective representation of $G$ on $\mathscr{H}_{\nu}$.
Indeed, comparison with the Bergman kernel shows that $h(z,w)^{-\nu}$ 
transforms under automorphisms according to the rule
\begin{eqnarray}\label{kernel}
h(gz,gw)^{-\nu}=J_g(z)^{-\frac{\nu}{n}}h(z,w)^{-\nu}\overline{J_g(w)}^{-\frac{\nu}{n}}.
\end{eqnarray}
Recall that for functions $f_1$ and $f_2$ of the form
\begin{eqnarray*}
f_1(z)=\sum_{k=1}^l\alpha_kK_{\nu}(z,w_k), 
\,\,f_2(z)=\sum_{k=1}^m\beta_kK_{\nu}(z,w'_k),
\end{eqnarray*}
the inner product is defined as
\begin{eqnarray}
\langle f_1,f_2 
\rangle_{\nu}=\sum_{i,j}\alpha_i\overline{\beta_j}K_{\nu}(w_i,w'_j) 
\label{inner}
\end{eqnarray}
Equation \eqref{kernel} implies that
\begin{eqnarray}
K_{\nu}(g^{-1}z,w)=J_{g^{-1}}(z)^{-\frac{\nu}{n}}K_{\nu}(z,gw)\overline{J_{g^{-1}}(w)}^{-\frac{\nu}{n}}.
\end{eqnarray}
Hence we have the following two equalities
\begin{eqnarray*}
\pi_{\nu}(g)f_1(z)=\sum_{k=1}^l\alpha_k\overline{J_{g^{-1}}(w_k)}^{-\frac{\nu}{n}}K_{\nu}(z,gw_k)\\
\pi_{\nu}(g)f_2(z)=\sum_{k=1}^m\beta_k\overline{J_{g^{-1}}(w'_k)}^{-\frac{\nu}{n}}K_{\nu}(z,gw'_k).
\end{eqnarray*}
The unitarity
\begin{eqnarray*}
\langle \pi_{\nu}(g)f_1,\pi_{\nu}(g)f_2 \rangle_{\nu}=\langle f_1,f_2 
\rangle_{\nu}
\end{eqnarray*}
now follows by an application of the transformation
rule \eqref{kernel} in the definition \eqref{inner}.
Since functions of the form above are dense in $\mathscr{H}_{\nu}$, it 
follows that each $\pi_{\nu}(g)$ is a unitary
operator and it is easy to see that $g \mapsto \pi_{\nu}(g)$ is a 
projective homomorphism of groups.
In fact, $\pi_{\nu}$ is an irreducible projective representation, cf
\cite{FK}.

\subsection{Fock-Fischer spaces}
It can be shown that for $\nu > (n-2)/2$ all holomorphic polynomials are 
in $\mathscr{H}_{\nu}$ and that polynomials of different homogeneous
degree are orthogonal. In this context, the spaces $\mathscr{H}_{\nu}$ are 
closely linked with the \emph{Fock-Fischer space}, $\mathscr{F}$,
which we will now describe.
The basis vector $e_1$ is a maximal tripotent which is decomposed into 
minimal tripotents as
$e_1=\frac{1}{2}(1,i,0, \ldots, 0)+\frac{1}{2}(1,-i,0,\ldots,0).$ (We omit 
the easy computations.) In order to expand the reproducing
kernel $K_{\nu}$ into a power series consistent with the treatment in 
\cite{FK}, we need to introduce a new norm on $\mathbb{C}^n$ so
that the minimal tripotents have norm $1$, i.e., the Euclidean norm 
multiplied with $\sqrt{2}$. Then
\begin{eqnarray*}
\{f_1, \ldots, f_n\}:=
\left\{\frac{1}{\sqrt{2}}e_1, \ldots,\frac{1}{\sqrt{2}}e_n\right\}
\end{eqnarray*}
is an orthonormal basis with respect to this new norm. We write points 
$z\in \mathscr{D}$ as $z=w_1f_1+ \cdots +w_nf_n$.
For polynomials $p(w)=\sum_{\alpha}a_{\alpha}w^{\alpha}$, we define
\begin{eqnarray*}
p^*(w)=\sum_{\alpha}\overline{a_{\alpha}}w^{\alpha}.
\end{eqnarray*}
The Fock-Fischer inner product is now defined as

\begin{eqnarray*}
\langle p,q \rangle_{\mathscr{F}}=p(\partial)(q^*)|_{w=0},
\end{eqnarray*}
where $p(\partial)$ is the differential operator
$\sum_{\alpha}a_{\alpha}\frac{\partial^{\alpha}}{\partial
w^{\alpha}}$, for $p$ as above. The Fock-Fischer space, $\mathscr{F}$, is the
completion of the space of polynomials. It
is easy to see that polynomials of different homogeneous degree are
orthogonal in $\mathscr{F}$. Moreover, the representation of $SO(n)$
on $\mathcal{P}^m$, the polynomials of homogeneous degree $m$, can be
decomposed into irreducible subspaces as

\begin{eqnarray}
\mathcal{P}^m=\bigoplus_{m-2k \geq 0} E_{m-2k}\otimes \mathbb{C}(ww^t)^k,
\end{eqnarray}
where $E_{i}$ are the spherical harmonic polynomials of degree $i$
(cf \cite{stein-w}). This is a special case of the general Hua-Schmid decomposition 
(cf \cite{FK}).The following relation holds between the
Fock-Fischer norm and the $\mathscr{H}_{\nu}$-norm on the space
$E_{m-2k}\otimes \mathbb{C}(ww^t)^k$ (cf \cite{FK}).

\begin{eqnarray}
\|p\|_{\nu}^2=\frac{\| p 
\|_{\mathscr{F}}^2}{(\nu)_{m-k}(\nu-\frac{n-2}{2})_k},\label{normer}
\end{eqnarray}
for $p \in E_{m-2k}\otimes \mathbb{C}(ww^t)^k$.
We have the following decomposition of $\mathscr{H}_{\nu}$ under $K$:
\begin{prop}(Faraut-Kor\`anyi, \cite{FK}) \label{FK}
a) If $\nu > \frac{n-2}{2}$, then
\begin{eqnarray}
\mathscr{H}_{\nu}|_K=\bigoplus\sum_{m-2k \geq 0} E_{m-2k} \otimes \mathbb{C}(zz^t)^k,
\end{eqnarray}
where $E_{m-2k}$ is the space of spherical harmonic polynomials of degree $m-2k$.
Moreover, we have the following expansion of the kernel function:
\begin{eqnarray}
h(z,w)^{-\nu}=\sum_{m-2k \geq 
0}(\nu)_{m-k}\left(\nu-\frac{n-2}{2}\right)_k K_{(m-k,k)}(z,w),
\end{eqnarray}
where $K_{(m-k,k)}$ is the reproducing kernel for the subspace 
$E_{m-2k}\otimes \mathbb{C}(zz^t)^k$ with the Fock-Fischer norm.
The series converges in norm
and uniformly on compact sets of $\mathscr{D} \times \mathscr{D}$.\\
b) If  $\nu = \frac{n-2}{2}$, then
\begin{eqnarray}
\mathscr{H}_{\nu}|_K=\bigoplus \sum_mE_m
\end{eqnarray}
\end{prop}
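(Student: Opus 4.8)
The plan is to reduce the statement to the norm identity \eqref{normer} together with the Hua--Schmid decomposition of the polynomials, and then to read off both parts by inspecting when the Pochhammer product $(\nu)_{m-k}(\nu-\tfrac{n-2}{2})_k$ is nonzero. First I would note that $K=SO(2)\times SO(n)$ acts unitarily on $\mathscr{H}_\nu$ and fixes the origin, so $\mathscr{H}_\nu$ splits into $K$-isotypic components; by the Hua--Schmid decomposition every $K$-type occurring among holomorphic polynomials is one of the spaces $W_{m,k}:=E_{m-2k}\otimes\mathbb{C}(zz^t)^k$. The whole problem is therefore to decide which $W_{m,k}$ actually embed in $\mathscr{H}_\nu$, and with what norm; this is exactly the information carried by \eqref{normer}.

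For part (a), when $\nu>\tfrac{n-2}{2}$ both $\nu$ and $\nu-\tfrac{n-2}{2}$ are positive, hence $(\nu)_{m-k}(\nu-\tfrac{n-2}{2})_k>0$ for all admissible $m,k$. By \eqref{normer} each $W_{m,k}$ then sits in $\mathscr{H}_\nu$ with finite, strictly positive norm, and since polynomials of different homogeneous degree are orthogonal (and distinct $SO(n)$-types within a fixed degree are orthogonal as well), the $W_{m,k}$ give an orthogonal decomposition of the dense subspace of polynomials, hence of $\mathscr{H}_\nu$. To obtain the kernel expansion I would use that rescaling the inner product on $W_{m,k}$ by the factor $1/c_{m,k}$, with $c_{m,k}=(\nu)_{m-k}(\nu-\tfrac{n-2}{2})_k$, multiplies its reproducing kernel by $c_{m,k}$; thus the reproducing kernel of $W_{m,k}$ in the $\nu$-norm is $c_{m,k}K_{(m-k,k)}$. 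Summing over the orthogonal pieces and identifying the total with the reproducing kernel $h(z,w)^{-\nu}$ of $\mathscr{H}_\nu$ gives the stated series. Norm convergence is Parseval for the orthogonal decomposition applied to the section $h(\cdot,w)^{-\nu}$, and since $K(z,z)=h(z,z)^{-\nu}$ is bounded on compacta, the reproducing property and Cauchy--Schwarz upgrade this to uniform convergence on compact subsets of $\mathscr{D}\times\mathscr{D}$.

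For part (b), at the boundary point $\nu=\tfrac{n-2}{2}$ the second Pochhammer factor becomes $(0)_k$, which equals $1$ for $k=0$ and vanishes for every $k\ge 1$, while $(\tfrac{n-2}{2})_m$ stays positive. Hence \eqref{normer} forces the $\nu$-norm of any nonzero element of $W_{m,k}$ with $k\ge 1$ to be infinite, so these pieces drop out of $\mathscr{H}_\nu$ and only the harmonic pieces $W_{m,0}=E_m$ survive, yielding $\mathscr{H}_\nu|_K=\bigoplus_m E_m$. Equivalently, one may pass to the limit $\nu\to\tfrac{n-2}{2}$ in the kernel expansion of part (a): the coefficients are polynomial in $\nu$, all $k\ge 1$ coefficients tend to $0$, and the surviving series $\sum_m(\tfrac{n-2}{2})_m K_{(m,0)}$ is a positive-definite kernel whose reproducing kernel Hilbert space is $\bigoplus_m E_m$ by the Moore--Aronszajn theorem.

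The step I expect to be the genuine obstacle is the identification of the summed reproducing kernels with $h(z,w)^{-\nu}$ --- equivalently the proof of the norm formula \eqref{normer} itself --- since this rests on computing the expansion coefficients of $(1-2\langle z,w\rangle+(zz^t)\overline{(ww^t)})^{-\nu}$ against the Fock--Fischer basis, i.e.\ on the full theory of spherical polynomials on this rank-two Jordan triple system. Granting \eqref{normer}, as recorded above from Faraut--Kor\`anyi, the remaining arguments are the orthogonality bookkeeping, the reproducing-kernel rescaling, and the two convergence statements.
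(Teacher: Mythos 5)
The paper offers no proof of this proposition at all: it is quoted verbatim from Faraut--Kor\'anyi \cite{FK}, exactly like the norm relation \eqref{normer} a few lines above it, so there is no ``paper's own proof'' to match your argument against. What you have written is a reconstruction that derives the proposition from \eqref{normer}, and as such it is sound in outline: the reproducing-kernel rescaling (dividing the inner product on $E_{m-2k}\otimes\mathbb{C}(zz^t)^k$ by $c_{m,k}=(\nu)_{m-k}(\nu-\tfrac{n-2}{2})_k$ multiplies its kernel by $c_{m,k}$), the fact that an orthogonal decomposition of the space sums the kernels, and the Cauchy--Schwarz upgrade of diagonal convergence to uniform convergence on compacta (where you should invoke Dini's theorem, since the partial sums on the diagonal increase monotonically to the continuous function $h(z,z)^{-\nu}$) are all standard and correct. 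You are also candid that the genuine content --- the identity $\sum_{m,k}c_{m,k}K_{(m-k,k)}(z,w)=h(z,w)^{-\nu}$, equivalently \eqref{normer} itself --- is not proved but imported from \cite{FK}; since the paper imports the same fact, this is consistent with the paper's logical structure.

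Two caveats are worth flagging. First, your derivation of the $K$-type decomposition in (a) from \eqref{normer} is slightly circular as phrased: \eqref{normer} is a statement about the $\nu$-norm \emph{on} $E_{m-2k}\otimes\mathbb{C}(zz^t)^k$, which presupposes that these polynomials lie in $\mathscr{H}_{\nu}$ and that polynomials are dense there --- facts the paper also only asserts (``It can be shown that\dots''). A cleaner route is the reverse one: build the Hilbert space $\bigoplus_{m,k}\bigl(E_{m-2k}\otimes\mathbb{C}(zz^t)^k\bigr)$ with the rescaled norms, show its kernel is the stated sum, identify that sum with $h(z,w)^{-\nu}$, and conclude equality of the two spaces by uniqueness of the reproducing-kernel Hilbert space; this yields membership and density of the polynomials for free. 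Second, in (b) your first argument (reading \eqref{normer} at $\nu=\tfrac{n-2}{2}$ as ``infinite norm'') is only heuristic, since the formula degenerates by division by $(0)_k=0$ rather than applying; your second argument --- continuity of the coefficients in $\nu$, vanishing of all $k\geq 1$ terms in the limit, and Moore--Aronszajn applied to the surviving kernel $\sum_m(\tfrac{n-2}{2})_mK_{(m,0)}$ --- is the one that actually carries the claim, and you should present it as the proof rather than as an alternative.
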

We will later need the norm of $(zz^t)^k$ in $\mathscr{H}_{\nu}$.
\begin{prop}
\begin{eqnarray}
\|(zz^t)^k\|_{\nu}^2=\frac{k!\left(\frac{n}{2}\right)_k}{(\nu)_k\left(\nu-\frac{n-2}{2}\right)_k}
\end{eqnarray}
\end{prop}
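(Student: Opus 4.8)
The plan is to reduce the computation to the norm relation \eqref{normer} together with an explicit evaluation of the Fock--Fischer norm of a power of the quadratic invariant. First I would pass to the coordinates $w=(w_1,\ldots,w_n)$ adapted to the Fock--Fischer space, in which $z=w_1f_1+\cdots+w_nf_n$ with $f_j=\frac{1}{\sqrt 2}e_j$. Since $z_j=w_j/\sqrt 2$, the quadratic form rescales as $zz^t=\frac12\,ww^t$, so that $(zz^t)^k=2^{-k}(ww^t)^k$ and hence $\|(zz^t)^k\|_\nu^2=4^{-k}\|(ww^t)^k\|_\nu^2$. The point of this rescaling is that $(ww^t)^k$ is exactly the element sitting in the summand $E_0\otimes\mathbb{C}(ww^t)^k$ of the Hua--Schmid decomposition, corresponding to $m=2k$ and $m-2k=0$.

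Next I would apply the norm identity \eqref{normer} to $p=(ww^t)^k\in E_0\otimes\mathbb{C}(ww^t)^k$. Here the homogeneous degree is $m=2k$, so the exponent $m-k$ appearing in \eqref{normer} equals $k$, giving
\[
\|(ww^t)^k\|_\nu^2=\frac{\|(ww^t)^k\|_{\mathscr{F}}^2}{(\nu)_k\,\bigl(\nu-\tfrac{n-2}{2}\bigr)_k}.
\]
It then remains to compute the Fock--Fischer norm $\|(ww^t)^k\|_{\mathscr{F}}^2$ and to check that the powers of $2$ cancel correctly against the factor $4^{-k}$.

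For the Fock--Fischer norm I would use its definition $\langle p,q\rangle_{\mathscr{F}}=p(\partial)(q^*)|_{w=0}$. Writing $p=q=(ww^t)^k=(w_1^2+\cdots+w_n^2)^k$, which has real coefficients so that $q^*=q$, the associated differential operator is simply the $k$-th power of the Laplacian $\Delta=\partial_{w_1}^2+\cdots+\partial_{w_n}^2$. Hence $\|(ww^t)^k\|_{\mathscr{F}}^2=\Delta^k(ww^t)^k$, which is a constant since $\Delta^k$ lowers the degree by $2k$, so that evaluation at $w=0$ is trivial. Writing $r^2=ww^t$ and using the elementary identity $\Delta\,r^{2m}=2m(2m+n-2)\,r^{2m-2}$ for radial monomials, I would iterate $k$ times to obtain
\[
\Delta^k r^{2k}=\prod_{i=1}^{k}2i\,(2i+n-2)=4^k\,k!\,\Bigl(\tfrac{n}{2}\Bigr)_k,
\]
where the last equality follows from grouping the factors as $2i(2i+n-2)=4\,i\,(i+\tfrac{n-2}{2})$ and recognising $\prod_{i=1}^k(i+\tfrac{n-2}{2})=(\tfrac{n}{2})_k$. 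Substituting back, the factor $4^k$ from this computation cancels the $4^{-k}$ from the coordinate change, yielding the claimed formula.

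The computation is essentially routine; the only place demanding care is the bookkeeping of the $\sqrt 2$-rescaling of coordinates, since an error there would produce a spurious power of $2$, together with the correct indexing of \eqref{normer} --- namely that the relevant index in that formula is the number $k$ of factors $(ww^t)$, while the homogeneous degree is $m=2k$. I expect no genuine obstacle beyond keeping these constants straight.
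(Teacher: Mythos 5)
Your proposal is correct and follows essentially the same route as the paper: both proofs rest on the iterated Laplacian identity $\Delta^k r^{2k}=\prod_{i=1}^{k}2i(2i+n-2)=4^k k!\left(\tfrac{n}{2}\right)_k$ together with the norm relation \eqref{normer} applied with $m=2k$, so that $m-k=k$. The only cosmetic difference is where the $\sqrt{2}$-rescaling is absorbed --- you cancel the $4^{\pm k}$ factors at the level of the $\nu$-norm, while the paper cancels them inside the Fock--Fischer norm computation --- and your bookkeeping of these constants is accurate.
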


\begin{proof}
A straightforward computation shows that
\begin{eqnarray*}
(\frac{\partial^2}{\partial z_1^2}+ \cdots + \frac{\partial^2}{\partial 
z_n^2})(z_1^2 + \cdots + z_n^2)^k
=(2^2 k(k-1)+n 2k)(z_1^2+ \cdots + z_n^2)^{k-1}
\end{eqnarray*}
Proceeding inductively, we obtain
\begin{eqnarray*}
\left(\frac{\partial^2}{\partial z_1^2}+ \cdots + 
\frac{\partial^2}{\partial z_n^2}\right)^k(z_1^2 + \cdots + z_n^2)^k
&=& \prod_{j=1}^k 2j(2(j-1)+n)\\
&=& 4^kk!\left(\frac{n}{2}\right)_k
\end{eqnarray*}
The Fock-Fischer norm is computed in the $w$-coordinates 
$w_i=\sqrt{2}z_i$, so
\begin{eqnarray*}
(zz^t)^k=2^{-k}(ww^t)^k
\end{eqnarray*}
 and
\begin{eqnarray*}
\left(\frac{\partial^2}{\partial z_1^2}+ \cdots + 
\frac{\partial^2}{\partial z_n^2}\right)^k
=2^{-k}\left(\frac{\partial^2}{\partial w_1^2}+ \cdots + 
\frac{\partial^2}{\partial w_n^2}\right)^k.
\end{eqnarray*}
Hence
\begin{eqnarray*}
\|(zz^t)^k\|_{\mathscr{F}}^2=k!\left(\frac{n}{2}\right)_k
\end{eqnarray*}
and an application of Prop. \ref{FK} gives the result.
\end{proof}
\end{section}

\begin{section}{Branching of $\pi_{\nu}$ under the subgroup $H$}

\subsection{A decomposition theorem}
Recall the irreducible (projective) representations $\pi_{\nu}$ from
the previous section. Our main objective is to decompose these into
irreducible representations under the subgroup $H$. The fact that 
$\mathscr{X}$
is a totally real form is reflected in the
restrictions of the representations $\pi_{\nu}$ to $H$.
\begin{prop}
The constant function $1$ is in $\mathscr{H}_{\nu}$ and is an 
$L$-invariant cyclic vector for the representation
$\pi_{\nu}: H \rightarrow \mathscr{U}(\mathscr{H}_{\nu}).$
\end{prop}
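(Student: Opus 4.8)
The plan is to verify the three assertions in turn---membership of $1$ in $\mathscr{H}_\nu$, its $L$-invariance, and its cyclicity---with the last being the substantive point.

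First I would observe that $1$ is literally the reproducing kernel based at the origin. Since $h(z,0)=1-2\langle z,0\rangle+(zz^t)\overline{(0\cdot 0^t)}=1$, we have $K_\nu(z,0)=h(z,0)^{-\nu}=1$, so the constant function $1=K_\nu(\cdot,0)$ belongs to $\mathscr{H}_\nu$; equivalently it is the lowest $K$-type $E_0$ in Proposition \ref{FK}. For $L$-invariance, recall $L=H\cap K$. Under the embedding $\theta$ the subgroup $L\cong SO(n)$ sits inside $K=SO(2)\times SO(n)$ as the block matrices with trivial $SO(2)$-part, and therefore acts on $V=\mathbb{C}^n$ by $z\mapsto Dz$ with $D\in SO(n)$. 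The complex Jacobian of this linear map is the constant $\det D=1$, so
\[
\pi_\nu(l)1(z)=J_{l^{-1}}(z)^{\nu/n}\,1(l^{-1}z)=1
\]
for every $l\in L$, and hence $1$ is $L$-fixed.

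The heart of the argument is cyclicity. Applying the transformation rule for $\pi_\nu$ on reproducing kernels, derived in Section 4, to $f_1=1=K_\nu(\cdot,0)$ gives
\[
\pi_\nu(h)1(z)=\overline{J_{h^{-1}}(0)}^{\,-\nu/n}\,K_\nu(z,h\cdot 0),
\]
so each $\pi_\nu(h)1$ is a nonzero scalar multiple of the reproducing kernel based at the point $h\cdot 0$. Since $H$ acts transitively on $\mathscr{X}\cong H/L$, the point $h\cdot 0$ ranges over all of $\mathscr{X}$ as $h$ ranges over $H$. Consequently the closed $H$-invariant subspace generated by $1$ coincides with the closed linear span of $\{K_\nu(\cdot,x):x\in\mathscr{X}\}$. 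To finish, suppose $f\in\mathscr{H}_\nu$ is orthogonal to this span. By the reproducing property $f(x)=\langle f,K_\nu(\cdot,x)\rangle_\nu=0$ for every $x\in\mathscr{X}$, i.e.\ the holomorphic function $f$ vanishes on the totally real form $\mathscr{X}$. As recorded in Section 3, this forces $f\equiv 0$, so the cyclic subspace is dense and $1$ is cyclic.

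I expect the only genuinely delicate point to be the reduction of cyclicity to a vanishing statement; once the orthogonal complement of $\{K_\nu(\cdot,x)\}_{x\in\mathscr{X}}$ is identified with holomorphic functions vanishing on $\mathscr{X}$, the conclusion is immediate from the totally-real property established earlier. The projective (rather than genuine) nature of $\pi_\nu$ causes no trouble here, since the cocycle alters the vectors $\pi_\nu(h)1$ only by unimodular scalars and hence leaves their closed linear span unchanged.
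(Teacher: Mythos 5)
Your proposal is correct and follows essentially the same route as the paper: both identify $\pi_{\nu}(h)1$ as a nonzero scalar multiple of the reproducing kernel $K_{\nu}(\cdot,h\cdot 0)$ via the transformation rule, and then conclude cyclicity because a function orthogonal to all these kernels vanishes on $\mathscr{X}$, hence vanishes identically by the totally real property. Your additional verifications of membership ($1=K_{\nu}(\cdot,0)$) and $L$-invariance (linear action with Jacobian $1$) are details the paper leaves implicit, but they are correct and do not change the argument.
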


\begin{proof}
First note that
\begin{eqnarray*}
K_{\nu}(z,h0)&=&J_h(h^{-1}z)^{-\nu/n}K_{\nu}(h^{-1}z,0)\overline{J_{h}(0)^{-\nu/n}}\\
&=&\overline{J_h(0)^{-\nu/n}}J_{h^{-1}}(z)^{\nu/n}K_{\nu}(h^{-1}z,0)\\
&=&\overline{J_h(0)^{-\nu/n}}\pi_{\nu}(h)1(z)
\end{eqnarray*}
Suppose now that the function $f \in \mathscr{H}_{\nu}$ is orthogonal to 
the linear span of the elements $\pi_{\nu}(h)1, h \in H$. By the above 
identity we have

\begin{eqnarray*}
f(h0)&=&\langle f,K_{\nu}(\cdot,h0)\rangle_{\nu}\\
&=&0.
\end{eqnarray*}
Since $H$ acts transitively on $\mathscr{X}$, $f$ is zero on 
$\mathscr{X}$. Hence it is identically zero.
\end{proof}
We want decompose the representation of $H$ into a \emph{direct integral} 
of irreducible representations.
For the definition of a direct integral over a measurable field of Hilbert 
spaces we refer to Naimark (\cite{naim}).
The following general decomposition theorem is stated in several 
references (e.g. \cite{ner}), but the author has not been able to find a proof of it in the 
literature. A proof for abelian groups can be found in \cite{naim}.
The proof we present below is based on the Gelfand-Naimark representation 
theory for $C^*$-algebras.

\begin{thm} \label{abstr}
Let $\pi$ be a unitary representation of the semisimple Lie group $H$ on 
a Hilbert space, $\mathscr{H}$.
Suppose further that $L$ is a maximal compact subgroup and that the 
representation has a cyclic $L$-invariant vector.
Then $\pi$ can be decomposed as a multiplicity-free direct integral of 
irreducible representations,
\begin{eqnarray}
\pi \cong \int_{\Lambda}\pi_{\lambda\,}d\mu(\lambda),
\end{eqnarray}
where $\Lambda$ is a subset of the set of positive definite spherical 
functions on $H$ and for $\lambda \in \Lambda$,$\,\pi_{\lambda}$
is the corresponding unitary spherical representation.
\end{thm}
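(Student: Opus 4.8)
The plan is to exploit the fact that, since $H$ is semisimple and $L$ is maximal compact, the pair $(H,L)$ is a Gelfand pair: the convolution algebra of $L$-bi-invariant integrable functions on $H$ is commutative. First I would integrate the representation, setting $\pi(f)=\int_H f(h)\pi(h)\,dh$ for $f\in C_c(L\backslash H/L)$, and let $\mathcal{A}$ be the $C^*$-algebra generated by these operators; by the Gelfand-pair property $\mathcal{A}$ is commutative. Writing $v_0$ for the cyclic $L$-invariant vector and $P_L=\int_L\pi(l)\,dl$ for the orthogonal projection onto the subspace $\mathscr{H}^L$ of $L$-fixed vectors, a short averaging computation gives $P_L\pi(f)v_0=\pi(f^\natural)v_0$, where $f^\natural$ is the $L$-bi-invariant average of $f$. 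Combined with the cyclicity of $v_0$ for $\pi(H)$, this shows that $v_0$ is a cyclic vector for the commutative algebra $\mathcal{A}$ acting on $\mathscr{H}^L$: indeed, if $w\in\mathscr{H}^L$ is orthogonal to $\mathcal{A}v_0$, then $\langle w,\pi(f)v_0\rangle=\langle w,P_L\pi(f)v_0\rangle=\langle w,\pi(f^\natural)v_0\rangle=0$ for all $f$, forcing $w=0$.

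Next I would invoke the Gelfand--Naimark theorem. The commutative $C^*$-algebra $\mathcal{A}$ is isometrically isomorphic to $C_0(\Lambda)$, where $\Lambda$ is its spectrum. The decisive identification --- essentially Godement's theory of spherical functions --- is that the characters of $\mathcal{A}$ are precisely the bounded positive definite spherical functions $\varphi_\lambda$ on $H$, the Gelfand transform of $\pi(f)$ being the spherical transform $\lambda\mapsto\int_H f(h)\varphi_\lambda(h)\,dh$. The positive functional $f\mapsto\langle\pi(f)v_0,v_0\rangle$ then corresponds, via the Riesz representation theorem, to a positive measure $\mu$ on $\Lambda$, and the GNS/spectral construction identifies $(\mathcal{A},\mathscr{H}^L,v_0)$ with $(C_0(\Lambda),L^2(\Lambda,\mu),1)$, the algebra acting by multiplication. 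In particular one obtains the integral representation $\langle\pi(h)v_0,v_0\rangle=\int_\Lambda\varphi_\lambda(h)\,d\mu(\lambda)$.

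With this in hand I would construct the direct integral. For each $\lambda\in\Lambda$ the positive definite spherical function $\varphi_\lambda$ produces, by the GNS construction, an irreducible spherical unitary representation $(\pi_\lambda,\mathscr{H}_\lambda)$ with a distinguished unit $L$-fixed cyclic vector $v_\lambda$ satisfying $\langle\pi_\lambda(h)v_\lambda,v_\lambda\rangle=\varphi_\lambda(h)$. Defining $U$ on the dense span of $\{\pi(h)v_0:h\in H\}$ by $U(\pi(h)v_0)=(\lambda\mapsto\pi_\lambda(h)v_\lambda)$, the integral representation above yields
\[
\langle\pi(h_1)v_0,\pi(h_2)v_0\rangle=\int_\Lambda\langle\pi_\lambda(h_1)v_\lambda,\pi_\lambda(h_2)v_\lambda\rangle\,d\mu(\lambda),
\]
so $U$ is isometric and extends to an isometry $\mathscr{H}\to\int_\Lambda\mathscr{H}_\lambda\,d\mu(\lambda)$ intertwining $\pi$ with $\int_\Lambda\pi_\lambda\,d\mu(\lambda)$. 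Surjectivity follows because each $v_\lambda$ is cyclic for $\pi_\lambda$, so the sections $\lambda\mapsto\pi_\lambda(h)v_\lambda$ are total in the direct integral. Finally, multiplicity-freeness is the statement that distinct characters of $\mathcal{A}$ give inequivalent irreducibles, equivalently that the commutant $\pi(H)'$ is abelian, which is guaranteed by the Gelfand-pair hypothesis.

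The main obstacle, I expect, is twofold and lies in the measure-theoretic foundations rather than in the formal manipulations. First, one must rigorously identify the Gelfand spectrum $\Lambda$ with the positive definite spherical functions and establish the spherical integral representation of $\langle\pi(\cdot)v_0,v_0\rangle$ (Godement's theorem). Second, one must verify the measurability of the field $\lambda\mapsto\mathscr{H}_\lambda$ and the surjectivity of $U$ within von Neumann's theory of direct integrals. The algebraic steps --- commutativity of $\mathcal{A}$, cyclicity of $v_0$ on $\mathscr{H}^L$, and the intertwining property of $U$ --- are routine once these foundations are secured.
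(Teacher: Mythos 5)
Your approach is correct in outline, and it is genuinely different from the paper's. The paper works with the full \emph{noncommutative} $C^*$-algebra generated by $\pi(L^1(H))$: it decomposes the state $f \mapsto \langle\pi(f)\xi,\xi\rangle$ into an integral of pure states via Choquet theory (Rudin, Thm.\ 3.28), and must then prove that almost every pure state occurring comes from a spherical function --- this requires an approximate-identity argument showing that the GNS states are right-$L$-invariant a.e., followed by a lemma that positivity on elements of the form $g*g^*$ forces positive definiteness. You instead pass immediately to the commutative algebra $\mathcal{A}$ acting on $\mathscr{H}^L$, identify $\mathcal{A}\cong C_0(\Lambda)$, and read off the measure from the Riesz/spectral theorem; the spherical-function input you need (characters of the bi-invariant algebra are given by positive definite spherical functions) is Godement's theorem, which is essentially the same lemma the paper proves, and you correctly flag it as the serious analytic ingredient. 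What your route buys: you avoid the Choquet decomposition altogether (whose decompositions need not be unique for noncommutative algebras), and multiplicity-freeness becomes transparent, since the commutant $\pi(H)'$ restricts injectively to $\mathscr{H}^L$ and lands in the commutant of a maximal abelian algebra, hence is abelian. What the paper's route buys: its intertwiner $T$ is defined directly on $\pi(L^1(H))\xi$, so it never needs the pointwise Godement--Bochner formula $\langle\pi(h)v_0,v_0\rangle=\int_\Lambda\varphi_\lambda(h)\,d\mu(\lambda)$ on which your construction of $U$ rests.

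The one step where your argument, as written, is invalid is surjectivity. You claim it ``follows because each $v_\lambda$ is cyclic for $\pi_\lambda$, so the sections $\lambda\mapsto\pi_\lambda(h)v_\lambda$ are total in the direct integral.'' Fiberwise cyclicity does not imply totality: if $c=\{c_\lambda\}$ is orthogonal to all these sections, you only obtain $\int_\Lambda\langle\pi_\lambda(h)v_\lambda,c_\lambda\rangle\,d\mu=0$ for each $h$, and you cannot conclude that the integrand vanishes almost everywhere, because cancellation across different $\lambda$ is possible. To rule this out you must localize in $\lambda$, and this is exactly the paper's closing argument: the image of the intertwiner also contains the sections $\lambda\mapsto\hat f(\lambda)v_\lambda$ for $f\in L^1(H)^{\#}$ (since the $L$-fixed vectors in each fiber are one-dimensional, so $\pi_\lambda(f)v_\lambda=\hat f(\lambda)v_\lambda$), the transforms $\hat f$ are dense in $C_0(\Lambda)$, so one may multiply the orthogonality relation by a dense family of functions of $\lambda$, conclude that $\langle\pi_\lambda(h)v_\lambda,c_\lambda\rangle=0$ a.e.\ for each fixed $h$, and only then invoke separability and fiberwise cyclicity to get $c_\lambda=0$ a.e. (Alternatively one can quote the theorem that a measurable field of pairwise inequivalent irreducible representations has diagonal commutant, but the proof of that theorem contains the same localization.) You do acknowledge in your final paragraph that surjectivity needs work inside direct-integral theory, so the gap is flagged; but the reason you give for it is not the reason it is true.
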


\begin{proof}
We consider the Banach space $L^1(H)$. This is a Banach $*$-algebra with multiplication defined as 
the convolution
$$(f*g)(x)=\int_Hf(y)g(y^{-1}x)dy$$ and involution defined by
$$f^{*}(x)=\overline{f(x^{-1})}.$$
Recall that the representation $\pi$ extends to a representation of the 
Banach algebra $L^1(H)$ by
\begin{eqnarray*}
f \mapsto \int_Hf(x)\pi(x)dx.
\end{eqnarray*}
We will also denote this mapping of $L^1(H)$ into 
$\mathscr{B}(\mathscr{H})$ (the set of bounded linear operators on $\mathscr{H}$) by $\pi$.
This representation will also be cyclic as the following argument shows. 
Denote by $\xi$ the  
$L$-invariant cyclic unit vector for $H$.
Vectors of the form
\begin{eqnarray*}
\pi(f_\epsilon)(\pi(h_1)\xi+ \cdots +\pi(h_n)\xi),
\end{eqnarray*}
where $\{f_\epsilon\}$ is an approximate identity on $H$, will then be dense in $\mathscr{H}$. Moreover the identity
\begin{eqnarray}
\pi(f)(\pi(h_1)\xi+ \cdots +\pi(h_n)\xi)=\pi((R_{h_1^{-1}}+ \cdots 
+R_{h_n^{-1}})f)\xi \nonumber,
\end{eqnarray}
holds for $f \in L^1(H)$ and $h_1, \ldots, h_n \in H$. (Here $R_h f$ denotes the right-translation of the argument of $f$; 
$f \mapsto f(\cdot \,h)$. We similarly define $L_hf$.) 
Hence vectors of the form $\pi(f)\xi$, where $f \in L^1(H)$, form a dense subset in $\mathscr{H}$.

The function $\Phi$ defined as
\begin{eqnarray}
\Phi: \pi(f) \mapsto \langle \pi(f)\xi,\xi \rangle
\end{eqnarray}
extends to a state on the
$C^*$-algebra $\mathscr{C}$ generated by $\pi(L^1(H))$ and the identity operator. 
It is a well-known fact from 
the theory of
$C^*$-algebras that the norm-decreasing positive functionals form a convex 
and weak*-compact set (cf \cite{mur}).
For a $C^*$-algebra with identity, the extreme points of this set are the pure states. 
Therefore, $\Phi$ can be expressed as
\begin{eqnarray}
\Phi=\int_X \varphi_x d\mu, \label{states}
\end{eqnarray}
where $X$ is the set of pure states and $\mu$ is a regular Borel measure on 
$X$ (cf \cite{rudfa}, Thm. 3.28).
We recall the Gelfand-Naimark-Segal construction of a cyclic 
representation of a $C^*$-algebra
associated with a given state (cf \cite{mur}). In this duality, the irreducible 
representations correspond to
the pure states. So each $\varphi_x$ in \eqref{states} parametrises an 
irreducible representation
of $\pi(L^1(H))$ on some Hilbert space $H_x$ with a $\pi(L^1(H))$-cyclic unit vector $\xi_x$.

Herafter we will, by an abuse of notation, write $\Phi(f)$ for $\Phi(\pi(f))$ and correspondingly for the functionals $\varphi_x$.

We define a unitary operator $T: \mathscr{H} \rightarrow 
\int_X H_xd\mu$ that
intertwines the actions of $\mathscr{C}$ by
\begin{eqnarray}
T:\pi(f)\xi \mapsto \{\pi_x(f)\xi_x\}, f \in L^1(H).
\end{eqnarray}
To see that this is well-defined, suppose that $\pi(f)\xi=0$. Then we have
\begin{eqnarray}
\langle \pi(f)\xi,\pi(f)\xi \rangle=\langle \pi(f^**f)\xi,\xi\rangle=0
\end{eqnarray}
i.e.,
\begin{eqnarray}
\Phi(f^**f)=0
\end{eqnarray}
By \eqref{states} we have
\begin{eqnarray}
\Phi(f^**f)=\int_H \langle \pi_x(f^**f)\xi_x,\xi_x\rangle_xd\mu=0. 
\label{isom}
\end{eqnarray}
Therefore $\pi_x(f)\xi_x=0$ for almost every $x$ and hence $T$ is well 
defined on a
dense set of vectors. Note that \eqref{isom} also shows that $T$ is 
isometric on this
set and it therefore extends to an isometry of $\mathscr{H}$ into 
$\int_X H_xd\mu$.

Consider now the subalgebra, $L^1(H)^\#$, consisting of all 
$L^1$-functions that are left- and right
$L$-invariant, i.e.,
\begin{eqnarray*}
L_lf=R_lf=f,
\end{eqnarray*}
for all $l$ in $L$. This is a commutative Banach *-algebra (cf \cite{helg2}, Ch. IV).
We know that $\varphi_x \circ \pi: L^1(H)^\# 
\rightarrow \mathbb{C}$ is
a homomorphism of algebras and is therefore of the form (cf \cite{helg2},
Ch. IV)
\begin{eqnarray}
\varphi_x(f)=\int_Hf(h)\phi_x(h)dh, f \in L^1(H)^{\#},
\end{eqnarray}
where $\phi_x$ is a bounded spherical function. 
In fact, this formula holds for all $L^1$-functions on $H$, as the following argument shows.

Since $\xi$ is $L$-invariant, the identity $$\pi(f)\xi=\pi(R_lf)\xi$$ holds for all $L^1$-functions $f$
and $l \in L$. Applying $T$ to both sides of this equality (and using the fact that both $L^1(H)$ and $L$ are separable),
we see that 
\begin{eqnarray}
\pi_x(f)\xi_x=\pi_x(R_lf)\xi_x \label{rightinvstates}
\end{eqnarray}  
holds for all $f \in L^1(H)$ and $l \in L$ outside some set of measure zero with respect to $\mu$. 
We now choose an approximation of the identity $\{\eta_{\epsilon}\}$ on $H$, and by replacing it with
$\{\int_L \eta_{\epsilon}(l \cdot l^{-1})dl \}$ if necessary, we may assume that it is invariant under the conjugate
action of $L$.

Consider now $\Phi_{\epsilon}$ defined by 
$$\varphi_{\epsilon}(f)=\langle \pi(f)\pi(\eta_{\epsilon})\xi, \pi(\eta_{\epsilon})\xi \rangle.$$ 
We define the functionals $\varphi_{x,\epsilon}$ analogously for all $x \in X$.
Clearly $\Phi_{\epsilon}(f) \rightarrow \Phi(f)$ as $\epsilon \rightarrow 0$ and therefore 
$$\lim_{\epsilon \rightarrow 0}\varphi_{x,\epsilon}(f)=\varphi_x(f)$$ holds for all $L^1$-functions $f$ outside some 
set of measure zero with respect to $\mu$. (Again we use the separability of $L^1(H)$.)
Using the $L$-conjugacy invariance of $\eta_{\epsilon}$ and \eqref{rightinvstates}, a simple calculation shows that
$$\varphi_{x,\epsilon}(f)=\varphi_{x,\epsilon}(f^{\#}),$$
where $$f(h)=\int_L\int_Lf(l_1hl_2)dl_1dl_2,$$ and by letting $\epsilon$ tend to zero we get
$$\varphi_x(f)=\varphi_x(f^{\#})$$ for almost every $x$. Hence
$$\varphi_x(f)=\int_Hf(h)\phi_x(h)dh,$$
for $f \in L^1(H)$.

Since $\varphi_x$ also preserves the involution $*$, it is
a positive linear functional, i.e.,
\begin{eqnarray}
\int_H f(h)\phi_x(h)dh \geq 0, \label{pos}
\end{eqnarray}
for every $f \in L^1(H)$, such that $f=g*g^*$, for some $g \in 
L^1(H)$.

\begin{lemma}
Suppose that $\varphi$ is a bounded spherical function such that\\
$\int_H f(h) \varphi(h)dh \geq 0$ for all $f \in L^1(H)$ of the form 
$f=g*g^*$
for some $g \in L^1(H)$.
Then $\varphi$ is positive definite.
\end{lemma}

\begin{proof}
For any $f=g*g^*$ as in the statement, we have
\begin{eqnarray*}
\int_H f(h) \varphi(h)dh=\int_H\int_H g(y)\overline{g(h^{-1}y)}dy
\varphi(h)dh\\
=\int_H g(y)\int_H \overline{g(h^{-1}y)}\varphi(h)dhdy\\
=\int_H\int_H g(y)\overline{g(z)}\varphi(yz^{-1})dzdy
\end{eqnarray*}
Pick any complex numbers $c_1, \ldots, c_n$ and elements $x_1, \ldots, 
x_n$
in $H$ and fix $\epsilon > 0$. We can choose a compact set $K \subset H$,
containing all $x_i$ in its interior, and a neighbourhood $U$ of the
identity such that
\begin{eqnarray}
|\varphi(xy^{-1})-\varphi(x'y'^{-1})|< \epsilon
\end{eqnarray}
for all $(x,y)$ and $(x',y')$ in $K \times K$
such that
$(xx'^{-1},yy'^{-1}) \in U \times U.$
We now choose disjoint neighbourhoods $E_i$ of $x_i$ such that
$E_i \subset K$ and $x_i^{-1}E_i \subset U$ for all $i$.
Next, we choose an $L^1$-function $g$ in such a way that its support lies
in $\bigcup E_i$ and $g$ has the constant value $c_i/|E_i|$ on $E_i$,
where

\begin{eqnarray*}
|E_i|=\int_{E_i}dh.
\end{eqnarray*}
Now we have
\begin{eqnarray}
\int_H \int_H g(y)\overline{g(z)}\varphi(yz^{-1})dzdy=
\sum_{i,j}\int_{E_i}\int_{E_j} g(y)\overline{g(z)}\varphi(yz^{-1})dzdy 
\nonumber\\
=\sum_{i,j}g(y_i)\overline{g(y_j)}\varphi(y_iy_j^{-1})|E_i||E_j|,
\label{sum}
\end{eqnarray}
for some $y_i$ in $E_i$.
The choice of $g$ implies that the sum in \eqref{sum} equals
\begin{eqnarray*}
\sum_{i,j}c_i\overline{c_j}\varphi(y_iy_j^{-1}).
\end{eqnarray*}
This yields
\begin{eqnarray*}
|\int_H\int_H 
g(y)\overline{g(z)}\varphi(yz^{-1})dzdy-\sum_{i,j}c_i\overline{c_j}\varphi(x_ix_j^{-1})|\\
<\sum_{i,j}|c_i||c_j||\varphi(y_iy_j^{-1})-\varphi(x_ix_j^{-1})|<n^2 
\sup_i|c_i|\epsilon.
\end{eqnarray*}
This shows that
\begin{eqnarray*}
\sum_{i,j}c_i\overline{c_j}\varphi(x_ix_j^{-1}) \geq 0,
\end{eqnarray*}
and hence $\varphi$ is positive definite.
\end{proof}
Since every positive definite spherical function defines an irreducible, 
unitary,
spherical representation of $H$, it also gives rise to a representation 
$L^1(H)$.
Its restriction to the subspace of $L$-invariant vectors, $E_x$ will be 
$L^1(H)^{\#}$-invariant and one-dimensional (cf \cite{helg2}, Ch. IV).
If the state $\varphi_x$ corresponds to the spherical function 
$\phi_x$, we
denote by $(\pi_x,H_x)$ both the representations of $H$ and of $L^1(H)$ 
that it induces.
Corresponding to this cyclic representation of $L^1(H)$ with cyclic 
unit vector $\phi_x$, we
have that the state $f \mapsto \langle \pi_x(f)\phi_x,\phi_x \rangle_x$ is
\begin{eqnarray*}
  \langle \pi_x(f)\phi_x,\phi_x \rangle_x&=&\int_H f(h)\langle 
\pi_x(h)\phi_x,\phi_x \rangle_xdh\\
&=&\int_H f(h)\langle L_h\phi_x,\phi_x \rangle_xdh\\
&=&\int_H f(h)\phi_x(h^{-1})dh\\
&=&\int_Hf(h)\overline{\phi_x(h)}dh.
\end{eqnarray*}
Therefore this representation of $L^1(H)$ is unitarily equivalent to 
the one given by
the Gelfand-Naimark-Segal correspondence, i.e., we can regard the 
representation as coming from
a representation of the group $H$.

The operator $T$ clearly intertwines the group representations $\pi$ and $\int_X\pi_xd\mu$.
The only thing that remains is to prove that $T$ is surjective. 

Suppose that $c=\{c_x\}$ is 
orthogonal to
$T(\pi(L^1(H))$, i.e., 
$$\int_X \langle \pi_x(f)\xi_x, c_x \rangle_xd\mu=0.$$
We observe that the restriction of $T$ to the space $\mathscr{H}^L$ of $L$-invariant vectors 
intertwines the representations of $\pi(L^1(H)^{\#})$ on $\mathscr{H}^L$ and $\int_x E_x d\mu$. The mapping
$$ \pi(f) \mapsto (x \mapsto \varphi_x(f))$$ is the Gelfand transform that realises the commutative $C^*$-algebra
generated by $\pi(L^1(H)^{\#})$ and the identity operator as the algebra, $C(X)$, of continuous functions on $X$.
Continuous functions of the form $\Psi(x)=\varphi_x(f^{\Psi})$, where $f^{\Psi} \in L^1(H)^{\#}$ are dense in $C(X)$. 
For such $\Psi$ we have 
\begin{eqnarray*}
\int_X \langle \pi_x(f)\xi_x, c_x \rangle_x \Psi(x)d\mu
&=&\int_X \langle \pi_x(f*f^{\Psi})\xi_x, c_x \rangle_x d\mu\\
&=&0
\end{eqnarray*}
From this we can conclude that (using once more the separability of $L^1(H)^{\#}$) 
for all $x$ outside a set of $\mu$-measure zero, the equality
$$\langle\pi_x(f)\xi_x, c_x \rangle_x=0$$ 
holds for all $f \in L^1(H)^{\#}$. Since the vectors $\xi_x$ are $L^1(H)^{\#}$-cyclic, we can conclude that $c=0$ 
and this finishes the proof.
\end{proof}

\begin{remark}
The measure $\mu$ in the above theorem is called the \emph{Plancherel 
measure} for the representation $\pi$.
\end{remark}

\subsection{Extension and expansion of the spherical functions}
Consider the mapping
$R:\mathscr{H}_{\nu} \rightarrow C^{\infty}(\mathscr{X})$ defined by
\begin{eqnarray*}
(Rf)(x)=h(x,x)^{\nu/2}f(x), x \in \mathscr{X}
\end{eqnarray*}
(see \cite{zhtams}). When $\nu>n-1$, $R$ is in fact an
$H$-intertwining operator onto a dense subspace of $L^2(\mathscr{X},d\iota)$
(where $d\iota$ is the $H$-invariant measure on $\mathscr{X}$) and the
principal series representation gives the desired decomposition of $\pi_{\nu}$ into irreducible spherical
representations.
This is a heuristic motivation for studying the functions
$R^{-1}\varphi_{\lambda}$, where $\varphi_{\lambda}$ is a
spherical function on $\mathscr{X}$.\\

\begin{thm}
Let $\nu>(n-2)/2$. The function $R^{-1}\varphi_{\lambda}(z)$ is holomorphic on $\mathscr{D}$ 
and has the power series expansion
\begin{eqnarray*}
R^{-1}\varphi_{\lambda}(z)=\sum_k 
p_k(\lambda)e_k(z), \label{expansion}
\end{eqnarray*}
where $e_k(z)$ is the normalisation of the function $z \mapsto (zz^t)^k$ in the $\mathscr{H}_{\nu}$-norm, and 
the coefficients $p_k(\lambda)$ are polynomials of degree $2k$ of 
$\lambda$ and satisfy the orthogonality relation
a) If $\nu \geq \frac{n-1}{2}$, then

\begin{eqnarray*}
&&\frac{1}{2\pi}\int_0^{\infty}\left|\frac{\Gamma(\frac{1}{2}+i\lambda)\Gamma(\frac{n-1}{2}+i\lambda)
\Gamma(\nu-\frac{n-1}{2}+i\lambda)}{\Gamma(2i\lambda)}\right|^2p_{\nu,k}(\lambda)\overline{p_{\nu,l}(\lambda)}d\lambda\\
&=&\Gamma \left(\frac{n}{2}\right)\Gamma 
\left(\nu-\frac{n-2}{2}\right)\Gamma(\nu)\delta_{kl}.
\end{eqnarray*}
b) If $\nu < \frac{n-1}{2}$, then
\begin{eqnarray*}
&&\frac{1}{2\pi}\int_0^{\infty}\left|\frac{\Gamma(\frac{1}{2}+i\lambda)\Gamma(\frac{n-1}{2}+i\lambda)
\Gamma(\nu-\frac{n-1}{2}+i\lambda)}{\Gamma(2i\lambda)}\right|^2p_{\nu,k}(\lambda)\overline{p_{\nu,l}(\lambda)}d\lambda\\
&+&\frac{\Gamma(\nu)\Gamma(\nu-\frac{n-2}{2})\Gamma(n-1-\nu)\Gamma(\frac{n}{2}-\nu)}{\Gamma(n-1-2\nu)}\\
&&\times p_{\nu,k}\left(i\left(\nu-\frac{n-1}{2}\right)
\right)\overline{p_{\nu,l}\left(i\left(\nu-\frac{n-1}{2}\right)\right)}\\
&=&\Gamma \left(\frac{n}{2}\right)\Gamma 
\left(\nu-\frac{n-2}{2}\right)\Gamma(\nu)\delta_{kl}.
\end{eqnarray*}
\end{thm}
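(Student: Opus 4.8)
The plan is to realise $R^{-1}\varphi_\lambda$ as one explicit holomorphic function, read off its Taylor coefficients, and then recognise these coefficients as continuous dual Hahn polynomials, whose orthogonalising measure is exactly the one in the statement.

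First I would compute $R$ along $\mathscr{X}$. For real $x$ one has $zz^t=\sum x_k^2=|x|^2$ and $\langle x,x\rangle=|x|^2$, so $h(x,x)=(1-|x|^2)^2$ and hence $(Rf)(x)=(1-|x|^2)^\nu f(x)$. By Corollary~\ref{cayleycor} the function $R^{-1}\varphi_\lambda$ must therefore restrict on $\mathscr{X}$ to
\[
(1-|x|^2)^{-\nu}\varphi_\lambda(x)=(1-|x|^2)^{i\lambda+\rho-\nu}\int_{S^{n-1}}\bigl(1-2(x,\zeta)+|x|^2\bigr)^{-(i\lambda+\rho)}\,d\sigma(\zeta).
\]
Replacing the real invariants $|x|^2$ and $(x,\zeta)$ by the holomorphic quantities $zz^t$ and $(z,\zeta)=\sum z_k\zeta_k$, I set
\[
F(z)=(1-zz^t)^{i\lambda+\rho-\nu}\int_{S^{n-1}}\bigl(1-2(z,\zeta)+zz^t\bigr)^{-(i\lambda+\rho)}\,d\sigma(\zeta),
\]
with principal branches. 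Since $\sigma$ is $O(n)$-invariant, $F$ depends on $z$ only through $zz^t$, it is holomorphic on $\mathscr{D}$, and by construction $R(F)=\varphi_\lambda$ on $\mathscr{X}$. As $\mathscr{X}$ is a totally real form, a holomorphic function vanishing on it is identically zero, so $F=R^{-1}\varphi_\lambda$.

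For the expansion write $s=i\lambda+\rho$ and evaluate $F$ on the slice $z=we_1$, so that $zz^t=w^2$ and $(z,\zeta)=w\zeta_1$. The Gegenbauer generating function $(1-2wu+w^2)^{-s}=\sum_{m\ge0}C_m^s(u)w^m$ together with the reduction $\int_{S^{n-1}}g(\zeta_1)\,d\sigma=c_n\int_{-1}^1g(u)(1-u^2)^{(n-3)/2}\,du$ gives a power series in $w$ with only even powers (odd Gegenbauer polynomials integrate to zero against the even weight). Multiplying by the binomial series of $(1-w^2)^{s-\nu}$ and collecting the coefficient of $w^{2k}=(zz^t)^k$ yields $F(z)=\sum_k a_k(\lambda)(zz^t)^k$. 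Now each integral $\int_{-1}^1C_{2j}^s(u)(1-u^2)^{(n-3)/2}\,du$ is a polynomial in $s$ of degree $2j$, while the $w^{2j}$-coefficient of $(1-w^2)^{s-\nu}$ equals $(\nu-s)_j/j!$, of degree $j$; hence $a_k(\lambda)$ is a polynomial in $\lambda$ of degree exactly $2k$, the top degree coming from the $j=0$ term. The functional equation $\varphi_\lambda=\varphi_{-\lambda}$ forces $a_k(-\lambda)=a_k(\lambda)$, so $a_k$ is a polynomial of degree $k$ in $\lambda^2$. Writing $e_k=(zz^t)^k/\|(zz^t)^k\|_\nu$ with $\|(zz^t)^k\|_\nu^2=k!\,(n/2)_k/\bigl((\nu)_k(\nu-\tfrac{n-2}{2})_k\bigr)$ from the norm computation of the previous section, I obtain the asserted expansion with $p_k(\lambda)=a_k(\lambda)\|(zz^t)^k\|_\nu$.

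It remains to identify the $p_k$ and establish their orthogonality. After simplifying the double sum for $a_k$ into a terminating ${}_3F_2$ in $\lambda^2$ and absorbing the Pochhammer factors from $\|(zz^t)^k\|_\nu$, the polynomials $p_k(\lambda)$ become, up to a $k$-independent constant, the continuous dual Hahn polynomials $S_k(\lambda^2;a,b,c)$ with $a=\tfrac12$, $b=\tfrac{n-1}{2}$, $c=\nu-\tfrac{n-1}{2}$; note that $a+b=\tfrac n2$, $a+c=\nu-\tfrac{n-2}{2}$ and $b+c=\nu$ reproduce exactly the three Gamma factors on the right-hand side. The orthogonality then follows from the classical orthogonality relations for these polynomials. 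When $\nu\ge\tfrac{n-1}{2}$ all three parameters are nonnegative and the measure is purely continuous, giving part~(a). When $\tfrac{n-2}{2}<\nu<\tfrac{n-1}{2}$ one has $-\tfrac12<c<0$, and the continuous dual Hahn weight acquires a single discrete mass at the point where $c+i\lambda=0$, i.e.\ $\lambda=i(\nu-\tfrac{n-1}{2})$; the residue supplying this mass produces the extra term in part~(b). I expect the main obstacle to be precisely this last step: casting $a_k$ in closed hypergeometric form and then tracking every Gamma and Pochhammer factor so that the normalising constant comes out as $\Gamma(\tfrac n2)\Gamma(\nu-\tfrac{n-2}{2})\Gamma(\nu)$, together with rigorously justifying the analytic continuation that creates the discrete mass in the regime $\nu<\tfrac{n-1}{2}$.
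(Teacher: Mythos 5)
Your strategy is essentially the paper's: extend $\varphi_{\lambda}$ holomorphically across the totally real form $\mathscr{X}$, expand in powers of $zz^t$, identify the coefficients as continuous dual Hahn polynomials with parameters $\left(\tfrac{1}{2},\tfrac{n-1}{2},\nu-\tfrac{n-1}{2}\right)$, and invoke Wilson's orthogonality, including the single discrete mass at $\lambda=i\left(\nu-\tfrac{n-1}{2}\right)$ when $\tfrac{n-2}{2}<\nu<\tfrac{n-1}{2}$. Your only real departure is at the start: where the paper quotes Helgason's closed form $\varphi_{\lambda}(x)={}_2F_1\left(a',b',c';\tfrac{xx^t}{1-xx^t}\right)$ and applies Euler's transformation, you start from the integral formula of Corollary~\ref{cayleycor} and expand via the Gegenbauer generating function. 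That variation is legitimate, and in fact the two routes merge: summing your Gegenbauer integrals in closed form returns exactly $(1-zz^t)^{a'-\nu}\,{}_2F_1(a',c'-b',c';zz^t)$, so your double sum for $a_k(\lambda)$ is term-for-term the paper's series \eqref{serie}. Your surrounding checks (the extension argument via the totally real form, the degree count, evenness in $\lambda$, the parameter bookkeeping $a+b=\tfrac{n}{2}$, $a+c=\nu-\tfrac{n-2}{2}$, $b+c=\nu$, and the origin of the point mass) are all correct and consistent with the paper.

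The gap is that you stop exactly where the real work begins. After the Cauchy product, the inner sum is a terminating ${}_3F_2(\ldots;1)$ in which $\lambda$ enters through $s=i\lambda+\rho$ both in two numerator parameters \emph{and} in a denominator parameter of the form $-(\nu-a'+k-1)$; in that shape it is not even visibly a polynomial in $\lambda^2$, let alone a continuous dual Hahn polynomial, whose defining ${}_3F_2$ carries all of its $\lambda$-dependence in the conjugate pair of numerator parameters $a\pm ix$ with $a$, $b$, $c$ independent of $\lambda$. The paper converts one form into the other by Thomae's transformation for terminating ${}_3F_2$'s; this is the single genuinely nontrivial identity in the whole argument, and it is precisely the step you label ``the main obstacle'' and then assume. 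Without Thomae's transformation (or an equivalent summation identity) the identification of $p_k$ with $S_k$, and hence the entire orthogonality statement in parts (a) and (b), remains unproved, so the proposal as written is an outline of the paper's proof with its crux missing rather than a complete argument.
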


\begin{proof}
Recall the root space decomposition for $\mathfrak{h}$. Let $\langle \,,\, 
\rangle$ denote the
inner product on $\mathfrak{a}_{\mathbb{C}}$ that is dual to the 
restriction of the Killing form to $\mathfrak{a}$.
Let $\alpha_0$ denote $\alpha/\langle \alpha, \alpha \rangle$.\\
In this setting the spherical function $\varphi_{\lambda}$ is determined 
by the formula (cf \cite{helg2}, Ch. IV, exercise 8)

\begin{eqnarray}
\varphi_{\lambda}(\exp(t\xi_e)0)=\,_2 
F_1(a',b',c';-\sinh(\alpha(t\xi_e))^2), \label{sph}
\end{eqnarray}
where
\begin{eqnarray*}
a'&=&\frac{1}{2}\left(\frac{1}{2}m_{\alpha}+m_{2\alpha}+\langle 
i\lambda,\alpha_0 \rangle 
\right)=\frac{1}{2}\left(\frac{n-1}{2}+i\lambda\right),\\
b'&=&\frac{1}{2}\left(\frac{1}{2}m_{\alpha}+m_{2\alpha}-\langle 
i\lambda,\alpha_0 
\rangle\right)=\frac{1}{2}\left(\frac{n-1}{2}-i\lambda\right),\\
c'&=&\frac{1}{2}\left(\frac{1}{2}m_{\alpha}+m_{2\alpha}+1 
\right)=\frac{1}{2}\left(\frac{n+1}{2}\right).
\end{eqnarray*}
Letting $x=\exp(t\xi_e)0=\tanh t$, \eqref{sph}  takes the form
\begin{eqnarray}
\varphi_{\lambda}(x)=\,_2 F_1(a',b',c';\frac{xx^t}{1-xx^t})
\end{eqnarray}
By Euler's formula (cf \cite{gasp}) we have
\begin{eqnarray*}
\varphi_{\lambda}(x)=\,_2 
F_1(a',b',c';\frac{xx^t}{1-xx^t})=(1-xx^t)^{a'}\,_2 F_1(a',c'-b',c';xx^t)
\end{eqnarray*}
For the function $R^{-1}\varphi_{\lambda}$ we thus get the expression
\begin{eqnarray}
R^{-1}\varphi_{\lambda}(z)=(1-zz^t)^{-\nu+a'}\,_2 F_1(a',c'-b',c;zz^t) 
\label{sp2}
\end{eqnarray}
Expanding \eqref{sp2} into a power series yields
\begin{eqnarray}
R^{-1}\varphi_{\lambda}(z)=\sum_{m=0}^{\infty}\sum_{l=0}^m
\frac{(\nu-a')_{m-l}(a')_l(c'-b')_l}{(m-l)!l!(c')_l}(zz^t)^m,\label{serie} 
\end{eqnarray}
noticing that $|zz^t|<1$ for $z \in \mathscr{D}$.
Next, we use the following simple identities:
\begin{eqnarray*}
(\nu-a')_{m-l}&=&\frac{(\nu-a')_m}{(\nu-a'+(m-l))_l}=\frac{(\nu-a')_m}{(-1)^l(-(\nu-a'+m-1))_l}\\
(m-l)!&=&\frac{m!}{(m-l+1)_l}.
\end{eqnarray*}
Substitution of these in \eqref{serie} yields
\begin{eqnarray}
&&R^{-1}\varphi_{\lambda}(z)\label{hyper}\\
&&=\sum_{m=0}^{\infty}\frac{(\nu-a')_m}{m!}\sum_{l=0}^m\frac{(a')_l(c'-b')_l(-m)_l}
{(c')_l(-(\nu-a'+m-1))_l}(zz^t)^m. \nonumber
\end{eqnarray}
The inner sum in \eqref{hyper} can be recognised as a hypergeometric 
function, i.e., we have
\begin{eqnarray*}
\sum_{l=0}^m\frac{(a')_l(c'-b')_l(-m)_l}{(c')_l(-(\nu-a'+m-1))_l}=
\,_3F_2(a',c'-b',-m;c',-(\nu-a'+m-1);1).
\end{eqnarray*}
Now we use Thomae's transformation rule (cf \cite{gasp}) for the function 
$_3F_2$:
\begin{eqnarray*}
\lefteqn{{}_3F_2(a',c'-b',-m;c',-(\nu-a'+m-1);1)}\\
&=&\frac{(-(\nu-a'+m-1)-(c'-b'))_m}{(-(\nu-a'+m-1))_m}\\
&\times&\,_3F_2(c'-a',c'-b',-m;1+(c'-b')+(\nu-a'+m-1)-m;1)
\end{eqnarray*}
We finally obtain the following expression:
\begin{eqnarray*}
R^{-1}\varphi_{\lambda}(z)=\sum_{k=0}^{\infty}c_{n,\nu,k}(\lambda)(zz^t)^k,
\end{eqnarray*}
where
\begin{eqnarray*}
c_{n,\nu,k}(\lambda)=\frac{(\nu-\frac{n-2}{2})_k}{k!}{ 
}_3F_2(-k,\frac{1+i\lambda}{2},\frac{1-i\lambda}{2};\frac{n}{2},
\nu-\frac{n-2}{2};1)
\end{eqnarray*}
Recall the continuous dual Hahn polynomials (cf \cite{wilson})
\begin{eqnarray}
S_k(x^2;a,b,c)=(a+b)_k(a+c)_k\\
\times{}_3F_2(-k,a+ix,a-ix;a+b,a+c;1)\nonumber
\end{eqnarray}
We can thus write
\begin{eqnarray*}
R^{-1}\varphi_{\lambda}(z)
&=&\sum_{k=0}^{\infty}\frac{(\nu-\frac{n-2}{2})_k}{(\frac{n}{2})_k(\nu-\frac{n-2}{2})_k 
k!}
S_k\left((\frac{\lambda}{2})^2;\frac{1}{2},\frac{n-1}{2},\nu-\frac{n-2}{2}\right)(zz^t)^k\\
&=&\sum_{k=0}^{\infty}p_{\nu,k}(\lambda)\frac{(zz^t)^k}{\|(zz^t)^k\|_{\nu}}.
\end{eqnarray*}
For the orthogonality relation in the claim, we refer to \cite{wilson}.
\end{proof}

\subsection{Principal and complementary series representations}
In this section we let $\mu$ (=$\mu_{\nu}$) be the finite measure on
the real line that orthogonalises the coefficients $p_k(\lambda)$ in
\eqref{expansion}. Let $\Lambda_{\nu}$ be its support. As we saw
above, $\mu$ can, depending on the value of $\nu$, either be
absolutely continuous with respect to Lebesgue measure or have a
point mass at $\lambda=i(\nu-(n-1)/2)$, i.e., we either have
$$\Lambda_{\nu}=(0,\infty) \bigcup \{i(\nu-(n-1)/2)\} , \,\nu \in 
((n-2)/2, (n-1)/2)$$ or
$$\Lambda_{\nu}=(0,\infty), \,\nu \geq(n-1)/2.$$
We will now construct explicit realisations for the spherical representations $\pi_{\lambda}$ corresponding
to the points $\lambda \in \Lambda_{\nu}$ on Hilbert spaces $H_\lambda$.
For $\lambda$ in the continuous part in  $\Lambda$, the underlying space $H_{\lambda}$ will be $L^2(S^{n-1})$ and
for the discrete point $i(\nu-(n-1)/2)$, $H_{\lambda}$ will be a Sobolev space.\\ 
We will hereafter
suppress the index $\nu$ and simply denote the support of $\mu$ by 
$\Lambda$.

\begin{lemma} \label{surface}
If $g \in H$, then $g$ transforms the surface measure, $\sigma$, on 
$S^{n-1}$ as
\begin{eqnarray*}
d\sigma(g \zeta)=J_{g}(\zeta)^{\frac{n-1}{n}}d\sigma(\eta).
\end{eqnarray*}
\end{lemma}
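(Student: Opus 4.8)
The plan is to recognise the factor $J_g(\zeta)^{(n-1)/n}$ as the $(n-1)$st power of the Euclidean conformal factor of $g$ on the sphere, and to identify that conformal factor with $J_g^{1/n}$. Recall from Section~3 that each $g\in H\cong SO_0(1,n)$ preserves the real form $\mathbb{R}^n$ and acts on the real unit ball $\mathscr{X}$ by the fractional-linear (hence conformal, M\"obius) action $X\mapsto (c+DX)(a+bX)^{-1}$; in particular it maps $S^{n-1}=\partial\mathscr{X}$ onto itself and extends smoothly and conformally across the boundary. Being conformal, its differential at a real point factors as $dg_x=\lambda_g(x)\,O_x$ with $O_x\in O(n)$ and $\lambda_g(x)>0$ the length-scaling factor.

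First I would pin down $J_g$ on real points. Since $g$ is holomorphic and maps $\mathbb{R}^n$ into itself, at $x\in\mathscr{X}$ the complex differential is the complexification of the real linear map $dg_x\colon\mathbb{R}^n\to\mathbb{R}^n$, so the complex Jacobian $J_g(x)=\det(dg_x)$ is real; as $H$ is connected, $g$ is orientation preserving and hence $J_g(x)=\lambda_g(x)^n>0$. This gives the key identity $\lambda_g(x)=J_g(x)^{1/n}$ with $J_g(x)>0$, and by continuity of $J_g$ on the closure the same holds for $\zeta\in S^{n-1}$, so that $J_g(\zeta)^{(n-1)/n}$ is an unambiguous positive real number. (As a consistency check, applying the kernel transformation rule \eqref{kernel} at $z=w=x$ together with $h(x,x)=(1-|x|^2)^2$ yields $\lambda_g(x)=\dfrac{1-|gx|^2}{1-|x|^2}=|J_g(x)|^{1/n}$, in agreement.)

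Finally, since $dg_\zeta$ acts on the tangent space $T_\zeta S^{n-1}$ as $\lambda_g(\zeta)$ times an orthogonal map, the induced $(n-1)$-dimensional surface measure is scaled by $\lambda_g(\zeta)^{n-1}$ under the change of variables $\eta=g\zeta$:
\[
d\sigma(g\zeta)=\lambda_g(\zeta)^{\,n-1}\,d\sigma(\zeta)=J_g(\zeta)^{\frac{n-1}{n}}\,d\sigma(\zeta),
\]
which is the assertion. The step I expect to demand the most care is the boundary passage: one must verify that $g$ extends to a genuine conformal diffeomorphism of a neighbourhood of each $\zeta\in S^{n-1}$ (so that $\lambda_g$ and $J_g$ are continuous and nonvanishing up to the boundary and the orientation/positivity argument survives in the limit), and that the interior relation $\lambda_g(x)=J_g(x)^{1/n}$ indeed has the conformal factor of the boundary map as its limit.
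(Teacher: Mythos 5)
Your strategy --- derive the lemma from Euclidean conformality of the $H$-action at real points, so that the tangential Jacobian on $S^{n-1}$ is $\lambda_g(\zeta)^{n-1}=J_g(\zeta)^{(n-1)/n}$ --- is viable, but the step carrying all the weight is justified by a false claim. The action of $H$ on $\mathscr{X}=\mathscr{D}\cap\mathbb{R}^n$ is \emph{not} the fractional-linear action $X\mapsto(c+DX)(a+bX)^{-1}$: that formula describes the action on $\mathcal{H}(\mathscr{X})$, the image of $\mathscr{X}$ under the Hua transform $x\mapsto 2x/(1+|x|^2)$ (the Klein model), and the two actions are only conjugate via $\mathcal{H}$. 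Worse, ``fractional-linear hence conformal'' is wrong: projective self-maps of the real ball are not Euclidean-conformal, as the paper's own Remark in Section 3 shows --- the invariant metric in the $\mathcal{H}(\mathscr{X})$ picture is $|v_x|^2/(1-|x|^2)^2+|v_{x^{\perp}}|^2/(1-|x|^2)$, which is not a multiple of the Euclidean metric, so those isometries distort Euclidean angles. Hence, as written, your key identity $dg_x=\lambda_g(x)O_x$ has no proof. It is nevertheless true, and the paper supplies the tools to prove it: by \eqref{bergman} the Bergman metric restricted to $\mathscr{X}$ is a constant times $(1-|x|^2)^{-2}$ times the Euclidean metric; since $g\in H$ preserves $\mathscr{X}$ and is a Bergman-metric isometry, $dg_x$ maps real tangent vectors to real tangent vectors and satisfies $|dg_xv|=\frac{1-|gx|^2}{1-|x|^2}|v|$, which is exactly conformality with $\lambda_g(x)=\frac{1-|gx|^2}{1-|x|^2}$. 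Equivalently, this is the paper's identity \eqref{metrik} read at real points. With that repair, your ``consistency check'' becomes the actual proof that $\lambda_g=J_g^{1/n}$.

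Second, the boundary passage you flag but leave open is a genuine obligation, and it is precisely where the paper spends its effort: rather than extending $g$ conformally past $S^{n-1}$, the paper reduces to $g=\exp\xi_v$ and $\zeta=e_1$, writes $J_g(e_1)=c\,J_{g|_{S^{n-1}}}(e_1)$ using that $dg(e_1)$ maps $T_{e_1}S^{n-1}$ into $T_{ge_1}S^{n-1}$, and then computes the normal factor $c=(dg(e_1)e_1,ge_1)$ as a limit $r\to 1$ of interior quantities, using \eqref{metrik} and $J_g(re_1)=\left(h(re_1,-u)/h(u,u)^{1/2}\right)^{-n}$, obtaining $c=J_g(e_1)^{1/n}$ without ever evaluating $g$ outside the closed ball. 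In your version you must instead verify that $g=\exp\xi_v$ with $u=\tanh v\in\mathscr{X}$ is holomorphic (hence smooth, with the interior conformality relation surviving in the limit) in a neighbourhood of each $\zeta\in S^{n-1}$; this does hold, because in the explicit formula $\exp\xi_v(z)=u+B(u,u)^{1/2}B(z,-u)^{-1}(z+Q(z)\overline{u})$ the determinant of $B(z,-u)$ is a power of $h(z,-u)$ and $h(\zeta,-u)=1+2(\zeta,u)+|u|^2\geq(1-|u|)^2>0$ for real $\zeta\in S^{n-1}$ --- but it needs to be said. With these two repairs your argument is complete, and it is in effect the conceptual version of the paper's computation: the paper's evaluation of $c$ is exactly the normal-direction instance of the conformality you invoke.
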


\begin{proof}
Clearly it suffices to prove the statement for automorphisms of the form
$$g=\exp{\xi_v}, v \in \mathbb{R}^n.$$
Moreover we can assume that $\zeta=e_1$, since any $\zeta \in S^{n-1}$ can 
be written as $le_1$, where
$l \in L$, and
\begin{align*}
\exp{\xi_v} (le_1)=(\exp{\xi_v} l)(e_1)=(ll^{-1}\exp{\xi_v} 
l)(e_1)=(l \sigma_{l^{-1}}(\exp \; \xi_v))
(e_1)\\=l \exp \; (Ad(l^{-1})\xi_v)(e_1)=l\exp{\xi_{\;l^{-1}v}}(e_1).
\end{align*}
Consider now the tangent space of $\mathbb{R}^n$ at $e_1$. We have an 
orthogonal decomposition
\begin{eqnarray*}
T_{e_1}(\mathbb{R}^n)=T_{e_1}(S^{n-1}) \oplus \mathbb{R}e_1.
\end{eqnarray*}
At $ge_1$ we have the corresponding
decomposition
\begin{eqnarray*}
T_{ge_1}(\mathbb{R}^n)=T_{ge_1}(S^{n-1}) \oplus \mathbb{R}ge_1.
\end{eqnarray*}
Since $H$ preserves $S^{n-1}$, 
$$dg(e_1)\,T_{e_1}(S^{n-1})=T_{ge_1}(S^{n-1}),$$ and by completing
$e_1$ and $ge_1$ to orthonormal bases for their respective tangent spaces, 
$dg(e_1)$
corresponds to a matrix of the form
\begin{align*}
\left( \begin{array}{cccc}
c & & 0 &\\
| &* &* &* \\
v &* & *&* \\
| &* &* &*
\end{array} \right)
\end{align*}
Hence 
\begin{eqnarray}
J_g(e_1)=cJ_{g|_{S^{n-1}}}(e_1),
\end{eqnarray} 
where 
\begin{eqnarray}
c=(dg(e_1)e_1,ge_1).
\end{eqnarray}
We next determine this constant $c$.

We have $$c=(dg(e_1)e_1,ge_1)=\lim_{r \rightarrow 1}(dg(re_1)re_1,gre_1).$$
For fixed $r<1$ we have
\begin{eqnarray*}
\exp{\xi_v}(re_1)&=&u+B(u,u)^{1/2}B(re_1,-u)^{-1}(re_1+Q(re_1)u)\\
&=&u+dg(re_1)(re_1+Q(re_1)u),
\end{eqnarray*}
and
\begin{eqnarray}
J_g(re_1)=\left(\frac{h(re_1,-u)}{h(u,u)^{1/2}}\right)^{-n},\label{Jacobi}
\end{eqnarray}
where $u=\tanh v$.
Since $Q(re_1)u=2(u,re_1)re_1-u$, we get
\begin{eqnarray}
\lefteqn{(dg(re_1)re_1,g(re_1))} \label{transf}\\
&=&(1+2(u,re_1))|dg(re_1)re_1|^2+(dg(re_1)re_1,u-dg(re_1)u) \nonumber
\end{eqnarray}
For any $z \in \mathscr{D} \bigcap \mathbb{R}^n$ and $v,w \in 
\mathbb{R}^n$, the identity
\begin{eqnarray}
(dg(z)v,w)=\frac{h(gz,gz)}{h(z,z)}(v,dg(z)^{-1}w) \label{metrik}
\end{eqnarray}
can be established using the transformation properties of the function $h$ and the operator $B$.
Applying \eqref{metrik} in the cases $z=re_1$, $v=re_1$, and 
$w=dg(re_1)re_1$ and \\
$w=u-dg(re_1)u$, repectively, yields
\begin{eqnarray}
(dg(re_1)re_1,dg(re_1)re_1)=\frac{h(g(re_1),g(re_1))}{h(re_1,re_1)}r^2 
\label{metrik1}
\end{eqnarray}
and
\begin{eqnarray}
&&(dg(re_1)re_1,u-dg(re_1)u)\label{metrik2}\\
&&=\frac{h(g(re_1),g(re_1))}{h(re_1,re_1)}(re_1,dg(re_1)^{-1}u-u). 
\nonumber
\end{eqnarray}
The expressions above and an elementary computation shows that \eqref{transf} can be written as 
\begin{eqnarray}
&&(dg(re_1)re_1,g(re_1)) \label{final}\\
&&=\frac{h(g(re_1),g(re_1))}{h(re_1,re_1)}\;r^2\; 
\frac{1+2(u,re_1)+|u|^2}{1-|u|^2} \nonumber
\end{eqnarray}
By the transformation rule for the Bergman kernel
\begin{eqnarray*}
h(g(re_1),g(re_1))= |J_g(re_1)|^{2/n}h(re_1,re_1).
\end{eqnarray*}
So,
\begin{eqnarray*}
c&=&\lim_{r \rightarrow 1}\;|J_g(re_1)|^{2/n}r^2 
\;\frac{1+2(u,re_1)+|u|^2}{1-|u|^2}\\
&=&|J_g(e_1)|^{2/n}\frac{h(e_1,-u)}{h(u,u)^{1/2}}.\\
\end{eqnarray*}
Comparing with the expression \eqref{Jacobi}, we have determined the constant $$c=J_g(e_1)^{1/n},$$
and this finishes the proof. 
\end{proof}
For $\lambda$ in the continuous part of $\Lambda$, the corresponding representation is a principal series representation described by 
the following proposition. (We will hereafter follow 
Helgason and in this context denote $S^{n-1}$ by $B$. The measure $\sigma$ 
will be denoted by $db$.)

\begin{prop}
For any real number $\lambda$, the map $h \mapsto \tau_{\lambda}(h)$, where
\begin{eqnarray*}
\tau_{\lambda}(h)f(b)=J_{h^{-1}}(b)^{\frac{i\lambda+\rho}{n}}f(h^{-1}b)
\end{eqnarray*}
defines a unitary representation of $H$ on $L^2(B).$
\end{prop}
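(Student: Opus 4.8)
The plan is to verify the two defining properties of a unitary representation separately: that $h \mapsto \tau_{\lambda}(h)$ is a group homomorphism, and that each operator $\tau_{\lambda}(h)$ is unitary on $L^2(B)$. Both rest on the cocycle (chain rule) identity for the complex Jacobian,
\begin{eqnarray*}
J_{g_1 g_2}(z)=J_{g_1}(g_2 z)\,J_{g_2}(z),
\end{eqnarray*}
together with the transformation of the surface measure established in Lemma \ref{surface}.

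First I would prove the homomorphism property by direct computation:
\begin{eqnarray*}
\tau_{\lambda}(g)\tau_{\lambda}(h)f(b)=J_{g^{-1}}(b)^{\frac{i\lambda+\rho}{n}}J_{h^{-1}}(g^{-1}b)^{\frac{i\lambda+\rho}{n}}f(h^{-1}g^{-1}b).
\end{eqnarray*}
Since $(gh)^{-1}=h^{-1}g^{-1}$, the cocycle identity gives $J_{h^{-1}}(g^{-1}b)\,J_{g^{-1}}(b)=J_{(gh)^{-1}}(b)$, so the product of the two Jacobian factors collapses to $J_{(gh)^{-1}}(b)^{\frac{i\lambda+\rho}{n}}$ and we recover $\tau_{\lambda}(gh)f(b)$. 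The one point requiring care is that the fractional power $(\cdot)^{\frac{i\lambda+\rho}{n}}$ be consistently defined so that exponents add rather than produce a multiplier. Because $H=SO_0(1,n)$ is connected and acts on $S^{n-1}$ by orientation-preserving maps, $J_h(b)$ is a \emph{positive real} number for $b\in B$ (visible from the explicit formula \eqref{Jacobi} evaluated at real points, where all the $h$-values are positive), so the powers are unambiguous positive reals and no projective multiplier appears. This is the subtle point; once it is settled, the homomorphism property is purely formal.

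The main work is unitarity, which I would reduce to showing each $\tau_{\lambda}(h)$ is isometric. Since $\lambda$ is real, $\rho=(n-1)/2$, and $J_{h^{-1}}(b)>0$, the imaginary part of the exponent contributes only a unimodular factor, whence
\begin{eqnarray*}
\left|J_{h^{-1}}(b)^{\frac{i\lambda+\rho}{n}}\right|^2=J_{h^{-1}}(b)^{\frac{2\rho}{n}}=J_{h^{-1}}(b)^{\frac{n-1}{n}},
\end{eqnarray*}
so that
\begin{eqnarray*}
\|\tau_{\lambda}(h)f\|^2=\int_B J_{h^{-1}}(b)^{\frac{n-1}{n}}|f(h^{-1}b)|^2\,db.
\end{eqnarray*}

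I would then substitute $b=h\zeta$ and invoke Lemma \ref{surface} in the form $db=J_h(\zeta)^{\frac{n-1}{n}}\,d\sigma(\zeta)$, while the cocycle identity applied to $h^{-1}\circ h=\mathrm{id}$ gives $J_{h^{-1}}(h\zeta)=J_h(\zeta)^{-1}$. The two Jacobian factors cancel exactly, leaving $\int_B|f(\zeta)|^2\,d\sigma(\zeta)=\|f\|^2$; in particular this shows $\tau_{\lambda}(h)$ maps $L^2(B)$ into itself and is bounded. Finally, the homomorphism property yields $\tau_{\lambda}(h)\tau_{\lambda}(h^{-1})=\tau_{\lambda}(e)=I$, so each isometry is surjective and therefore unitary, completing the proof. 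I expect the matching of the exponent $\frac{n-1}{n}$ coming from $2\rho/n$ with the measure-transformation exponent of Lemma \ref{surface} to be exactly what makes the cancellation work, so the real content has already been packaged into that lemma.
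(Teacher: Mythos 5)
Your proof is correct and its core computation — reducing unitarity to the identity $|J_{h^{-1}}(b)^{\frac{i\lambda+\rho}{n}}|^{2}=J_{h^{-1}}(b)^{\frac{2\rho}{n}}=J_{h^{-1}}(b)^{\frac{n-1}{n}}$ and then cancelling this against the measure transformation $db=J_h(\zeta)^{\frac{n-1}{n}}d\sigma(\zeta)$ from Lemma \ref{surface} via the substitution $b=h\zeta$ — is exactly the paper's proof. Your additional verifications (the homomorphism property via the Jacobian cocycle, the positivity of $J_h(b)$ at real boundary points, and surjectivity of the isometries) are correct and fill in details the paper leaves implicit, but they do not change the approach.
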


\begin{proof}
We have
\begin{eqnarray*}
\int_B 
|J_{h^{-1}}(b)^{\frac{i\lambda+\rho}{n}}|^2|f(h^{-1}b)|^2db&=&\int_B 
J_{h^{-1}}(hb)^{\frac{2\rho}{n}}|f(b)|^2d(hb)\\
&=&\int_B J_{h}(b)^{-\frac{2\rho}{n}}|f(b)|^2J_h(b)^{\frac{n-1}{n}}db\\
&=&\int_B |f(b)|^2db,
\end{eqnarray*}
where the last equality follows by lemma~\ref{surface}.
\end{proof}
It is well known that the representations $\tau_{\lambda}$ above are 
unitarily equivalent to
the canonical spherical representations associated with the corresponding 
functionals $\lambda$ on
$\mathfrak{a}_{\mathbb{C}}$ (cf \cite{knapp1}, ch. 7).

In order to realise the representation $\tau_{\lambda}$ for $\lambda=i(\nu-(n-1)/2)$, we consider the following
Hilbert spaces.
\begin{defin}
For $\frac{n-2}{2n} \leq \alpha < \frac{n-1}{2n}$, let 
$\mathscr{C}_{\alpha}$ be the Hilbert space completion of the
$C^{\infty}$-functions on $S^{n-1}$ with respect to the norm
\begin{eqnarray*}
\|f\|_{\mathscr{C}_{\alpha}}=\int_{S^{n-1}}\int_{S^{n-1}}f(\zeta)\overline{f(\eta)}K(\zeta,\eta)^{\alpha}
d\sigma(\zeta)d\sigma(\eta)
\end{eqnarray*}
\end{defin}
Using the action of $H$ on $S^{n-1}$, we can define a unitary 
representation of $H$ on $\mathscr{C}_{\alpha}$ of the form
\begin{eqnarray*}
\sigma_{\alpha}:\,f \mapsto J_{h^{-1}}(\cdot)^{\beta}f(h^{-1} \cdot), h 
\in H,
\end{eqnarray*}
where $\beta=-\alpha+(n-1)/n$. The unitarity follows from
\begin{eqnarray*}
\lefteqn{\int_{S^{n-1}}\int_{S^{n-1}}J_{h^{-1}}(\zeta)^{\beta}f(h^{-1}\zeta)\overline
{J_{h^{-1}}(\eta)^{\beta}
f(h^{-1}\eta)}K(\zeta,\eta)^{\alpha}
d\sigma(\zeta)d\sigma(\eta)}\\
&=&\int_{S^{n-1}}\int_{S^{n-1}}J_{h}(\zeta)^{-\beta}f(\zeta)\overline{J_{h}(\eta)^{-\beta}
f(\eta)}K(h\zeta,h\eta)^{\alpha}J_{h}(\zeta)^{\frac{n-1}{n}}J_{h}(\eta)^{\frac{n-1}{n}}
d\sigma(\zeta)d\sigma(\eta)\\
&=&\int_{S^{n-1}}\int_{S^{n-1}}J_{h}(\zeta)^{-\beta -\alpha + 
\frac{n-1}{n}}
J_{h}(\eta)^{-\beta -\alpha + \frac{n-1}{n}}
f(\zeta)\overline{f(\eta)}K(\zeta,\eta)^{\alpha}d\sigma(\zeta)d\sigma(\eta).
\end{eqnarray*}
In fact, this representation is irreducible (cf \cite{cow}). 
We denote this representation by $\sigma_{\alpha}$. One can prove that 
for $\alpha=\nu/n$ and $\lambda=\nu-(n-1)/2$, $\sigma_{\alpha}$ and $\tau_{\lambda}$ are unitarily
equivalent.

Recall the expression in Cor. \ref{cayleycor} for the spherical functions.
In this setting we write it as
\begin{eqnarray*}
\varphi_{\lambda}(x)=\int_Be_{\lambda,b}(x)db,
\end{eqnarray*}
where
$$e_{\lambda,b}(x)=\left(\frac{h(x,x)^{1/2}}{h(x,b)}\right)^{i\lambda+\rho}$$ 
by
Cor. \ref{cayleycor}. For fixed $z \in \mathscr{D}$ and $\lambda \in
\Lambda$, $R^{-1}e_{\lambda,b}(z)$ is a function in $L^2(B)$.
Moreover, $\pi_{\nu}(H)$ makes sense as a group of mappings on
$\mathcal{O}(\mathscr{D})$, the set of holomorphic functions on
$\mathscr{D}$. We have a relationship between these representations.
\begin{lemma}
For every $g \in H$ and $\lambda \in \Lambda$, 
\begin{eqnarray}
\pi_{\nu}(g)\tau_{\lambda}(g)R^{-1}e_{\lambda,b}(z)=R^{-1}e_{\lambda,b}(z).
\end{eqnarray}
Correspondingly, for $X \in \mathfrak{h}$, we have the relation
\begin{eqnarray}
\pi_{\nu}(X)R^{-1}e_{\lambda,b}(z)=-\tau_{\lambda}(X)R^{-1}e_{\lambda,b}(z).
\end{eqnarray}
\end{lemma}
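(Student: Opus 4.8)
The plan is to read the asserted group identity as a covariance property of the two-variable kernel $F(z,b):=R^{-1}e_{\lambda,b}(z)$, holomorphic in $z\in\mathscr{D}$ and depending on $b\in S^{n-1}$. Here $\tau_{\lambda}(g)$ acts in the variable $b$ while $\pi_{\nu}(g)$ acts in the variable $z$, and since the two actions concern different variables they commute. Unwinding the definitions gives $\pi_{\nu}(g)\tau_{\lambda}(g)F(z,b)=J_{g^{-1}}(z)^{\nu/n}J_{g^{-1}}(b)^{(i\lambda+\rho)/n}F(g^{-1}z,g^{-1}b)$, so the first assertion is equivalent to the covariance
\begin{equation*}
F(kz,kb)=J_{k}(z)^{-\nu/n}J_{k}(b)^{-(i\lambda+\rho)/n}F(z,b),\qquad k\in H,\ z\in\mathscr{D},\ b\in S^{n-1}.
\end{equation*}
First I would establish this on the totally real locus $\mathscr{X}$ and then propagate it to all of $\mathscr{D}$ by holomorphy.

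On $\mathscr{X}$ the proof rests on the transformation rule \eqref{kernel} for the function $h$. Taking $\nu$-th roots in \eqref{kernel} gives $h(kz,kw)=J_{k}(z)^{1/n}h(z,w)\overline{J_{k}(w)}^{1/n}$; for $k\in H$ this specialises, on the real points and on the boundary sphere, to $h(kx,kx)=J_{k}(x)^{2/n}h(x,x)$ and $h(kx,kb)=J_{k}(x)^{1/n}J_{k}(b)^{1/n}h(x,b)$, using that $J_{k}$ is real and positive on $\mathscr{X}$ and on $S^{n-1}$ (the group $H$ is connected and preserves both). Substituting these into the explicit expression $e_{\lambda,b}(x)=(h(x,x)^{1/2}/h(x,b))^{i\lambda+\rho}$ from Cor.~\ref{cayleycor}, the factors $J_{k}(x)^{1/n}$ cancel and one obtains the Poisson-kernel covariance $e_{\lambda,kb}(kx)=J_{k}(b)^{-(i\lambda+\rho)/n}e_{\lambda,b}(x)$. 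Multiplying by $h(kx,kx)^{-\nu/2}=J_{k}(x)^{-\nu/n}h(x,x)^{-\nu/2}$ then yields the desired covariance for $F$ on $\mathscr{X}$. The step that needs care here is the passage of \eqref{kernel} to a boundary point $b\in S^{n-1}$ and the correct identification of the Jacobian: the continuity of $h$ and of $J_{k}$ up to the boundary justifies the limit, and Lemma~\ref{surface}, which already expresses the boundary measure cocycle through the same $J_{k}$ that appears in $\tau_{\lambda}$, guarantees that the Jacobian occurring above is precisely the one entering the definition of $\tau_{\lambda}$. This matching is the main obstacle.

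To remove the restriction $x\in\mathscr{X}$, I would observe that for fixed $k$ and $b$ both sides of the covariance are holomorphic functions of $z$ on $\mathscr{D}$: the left-hand side is $F(\cdot,kb)$ precomposed with the holomorphic automorphism $k$, while on the right $J_{k}(z)^{-\nu/n}$ is a fixed holomorphic branch of a non-vanishing function and $F(z,b)$ is holomorphic in $z$, being by definition the holomorphic extension to $\mathscr{D}$ of $h(\cdot,\cdot)^{-\nu/2}e_{\lambda,b}$ from $\mathscr{X}$. Since they agree on $\mathscr{X}$ and every holomorphic function vanishing on the totally real form $\mathscr{X}$ vanishes identically, they agree on all of $\mathscr{D}$. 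Applying this covariance with $k=g^{-1}$ in the displayed formula for $\pi_{\nu}(g)\tau_{\lambda}(g)F$ makes the two Jacobian factors cancel (the fractional powers being taken in the same branch on both sides, so that the projective cocycle of $\pi_{\nu}$ drops out), which proves the first identity. The infinitesimal relation then follows by setting $g=\exp(tX)$, $X\in\mathfrak{h}$, and differentiating the one-parameter identity $\pi_{\nu}(\exp tX)\tau_{\lambda}(\exp tX)F=F$ at $t=0$; since $\pi_{\nu}$ and $\tau_{\lambda}$ act on different variables, the Leibniz rule gives $\pi_{\nu}(X)F+\tau_{\lambda}(X)F=0$, as claimed.
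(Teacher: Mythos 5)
Your proof is correct and takes essentially the same route as the paper: the paper's entire proof of this lemma is the remark that it is ``straightforward by applying the transformation rules for the function $h(z,w)$'', and your argument is exactly that computation, carried out first on the totally real form $\mathscr{X}$ and then propagated to all of $\mathscr{D}$ by holomorphy, with the group identity differentiated to get the Lie algebra statement. The details you flag (reality and positivity of $J_k$ on $\mathscr{X}$ and on $S^{n-1}$, the identification of the boundary Jacobian with the cocycle defining $\tau_{\lambda}$ via Lemma~\ref{surface}, and the choice of branch of the fractional powers) are precisely the points the paper leaves implicit.
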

The proof is straightforward by applying the transformation rules for the 
function $h(z,w)$.

\subsection{The Fourier-Helgason transform}
The purpose of this section is to construct an $H$-intertwining unitary operator between the 
Hilbert spaces $\mathscr{H}_{\nu}$ and $\int_{\Lambda}H_{\lambda}d\mu$.

Any holomorphic function, $f$, on $\mathscr{D}$ has a power series 
expansion
\begin{eqnarray}
f(z)=\sum_{\alpha}f_{\alpha}z^{\alpha},
\end{eqnarray}
where $f_{\alpha}=\frac{\partial^{\alpha}f}{\alpha!\partial
z^{\alpha}}(0).$ We can collect the powers of equal homogeneous
degree together and write
\begin{eqnarray}
f(z)=\sum_k f_k(z),
\end{eqnarray}
where $f_k$ is of homogeneous degree $k$.
We now consider the mapping $$( \cdot, \cdot)_{\nu}: \mathcal{P} \times 
\mathcal{O}(\mathscr{D})
\rightarrow \mathbb{C}$$ defined as
\begin{eqnarray}
(f,g)_{\nu}=\sum_k \langle f,g_k\rangle_{\nu}.
\end{eqnarray}
Observe that the definition makes sense since every polynomial is 
orthogonal to all but finitely many $g_k$.
\begin{defin}
If f is a polynomial in $\mathscr{H}_{\nu}$, its generalised 
Fourier-Helgason transform is the function $\tilde{f}$
on $\Lambda \times B$ defined by
\begin{eqnarray}
\tilde{f}(\lambda,b)=(f,R^{-1}e_{\lambda,b})_{\nu}
\end{eqnarray}
\end{defin}

\begin{prop} \label{RI}
(i)\,If the polynomial $f$ is in $\mathscr{H}_{\nu}^L$, then $\tilde{f}$ 
is $L$-invariant and
\begin{eqnarray*}
\|f\|_{\nu}^2=\int_{\Lambda}\|\tilde{f}\|_{\lambda}^2d\mu,
\end{eqnarray*}
where $\|\cdot\|_{\lambda}$ is the norm on $H_{\lambda}$,
and the Fourier-Helgason transform extends to an isometry from 
$\mathscr{H}_{\nu}^L$ onto
$L^2(\Lambda, d\mu)$.\\
(ii) The inversion formula for $L$-invariant polynomials
\begin{eqnarray}
f(z)=\int_{\Lambda}\tilde{f}(\lambda)R^{-1}\varphi_{\lambda}(z)d\mu(\lambda) 
\label{radialinv}
\end{eqnarray}
holds.
Moreover, the above formula holds for arbitrary $L$-invariant functions, 
when restricted to the
submanifold $\mathscr{X}$.
\end{prop}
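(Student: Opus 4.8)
The plan is to reduce both statements to the two orthonormal expansions already at hand. First I would record that $\{e_k\}$, with $e_k(z)=(zz^t)^k/\|(zz^t)^k\|_\nu$, is an orthonormal basis of the $L$-invariant subspace $\mathscr{H}_\nu^L$: by Proposition~\ref{FK} the only $L\cong SO(n)$-fixed vectors in $E_{m-2k}\otimes\mathbb{C}(zz^t)^k$ occur when $m-2k=0$ (the trivial representation of $SO(n)$ sits only in the constant harmonics, as $n>2$), so $\mathscr{H}_\nu^L=\bigoplus_k\mathbb{C}(zz^t)^k$. Next, for $L$-invariant $f$ I would show that $\tilde f(\lambda,\cdot)$ is constant on $B$: applying the preceding Lemma with $g=l\in L$ (whose Jacobian on $B=S^{n-1}$ is trivial) gives $R^{-1}e_{\lambda,lb}=\pi_\nu(l)R^{-1}e_{\lambda,b}$, and then unitarity of $\pi_\nu(l)$ together with $\pi_\nu(l)f=f$ yields $\tilde f(\lambda,lb)=\tilde f(\lambda,b)$; since $L$ acts transitively on $B$, $\tilde f$ is $L$-invariant. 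Averaging over $B$ and using $\varphi_\lambda=\int_B e_{\lambda,b}\,db$ gives $\tilde f(\lambda)=(f,R^{-1}\varphi_\lambda)_\nu$, the interchange being legitimate because the pairing against the polynomial $f$ involves only finitely many homogeneous components. Inserting $f=\sum_k c_k e_k$ and $R^{-1}\varphi_\lambda=\sum_k p_{\nu,k}(\lambda)e_k$ from the preceding theorem, and using $\langle e_k,e_l\rangle_\nu=\delta_{kl}$ and the reality of $p_{\nu,k}$ on $\Lambda$, I obtain the explicit formula $\tilde f(\lambda)=\sum_k c_k\,p_{\nu,k}(\lambda)$.

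Part~(i) then follows at once. For $L$-invariant $f$ the element $\tilde f(\lambda,\cdot)$ is a constant on $B$, so its $H_\lambda$-norm is a fixed multiple of $|\tilde f(\lambda)|$ (equal to $|\tilde f(\lambda)|$ on the continuous part, where $db$ is a probability measure, and a fixed multiple at the discrete point, which the point mass of $\mu$ absorbs in accordance with part~(b) of the preceding theorem). With $\mu$ normalised so that the orthogonality relation reads $\int_\Lambda p_{\nu,k}\,p_{\nu,l}\,d\mu=\delta_{kl}$, this gives $\int_\Lambda\|\tilde f\|_\lambda^2\,d\mu=\sum_{k,l}c_k\overline{c_l}\int_\Lambda p_{\nu,k}p_{\nu,l}\,d\mu=\sum_k|c_k|^2=\|f\|_\nu^2$. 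Hence the transform is the isometry determined by $e_k\mapsto p_{\nu,k}$, and it is onto $L^2(\Lambda,d\mu)$ because $\{p_{\nu,k}\}$ is a complete orthonormal system there.

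For the inversion formula in (ii) I would substitute the same two expansions. For a polynomial $f=\sum_k c_k e_k$ all sums are finite, so $\int_\Lambda\tilde f(\lambda)R^{-1}\varphi_\lambda(z)\,d\mu=\sum_{k,j}c_k e_j(z)\int_\Lambda p_{\nu,k}p_{\nu,j}\,d\mu=\sum_k c_k e_k(z)=f(z)$. To extend this to an arbitrary $f\in\mathscr{H}_\nu^L$ on $\mathscr{X}$, I would fix $x\in\mathscr{X}$ and observe that $xx^t=|x|^2<1$, so by the ratio test (with $\|(zz^t)^k\|_\nu^2=k!(n/2)_k/[(\nu)_k(\nu-\tfrac{n-2}{2})_k]$) the series $\sum_k|e_k(x)|^2=\sum_k|x|^{4k}/\|(zz^t)^k\|_\nu^2$ converges; consequently $\lambda\mapsto R^{-1}\varphi_\lambda(x)=\sum_k e_k(x)p_{\nu,k}(\lambda)$ lies in $L^2(\Lambda,d\mu)$ and $g\mapsto\int_\Lambda g(\lambda)R^{-1}\varphi_\lambda(x)\,d\mu$ is a bounded functional on $L^2(\Lambda,d\mu)$. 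Since polynomials are dense in $\mathscr{H}_\nu^L$, the transform of part~(i) is onto, and pointwise evaluation at $x$ is continuous on $\mathscr{H}_\nu$, both sides of the inversion formula depend continuously on $f$ at each $x\in\mathscr{X}$, so the identity passes to the limit. Applying $R$ recasts it as $h(x,x)^{\nu/2}f(x)=\int_\Lambda\tilde f(\lambda)\varphi_\lambda(x)\,d\mu$, the classical Harish-Chandra spherical inversion formula for the radial function $Rf$ on $\mathscr{X}=H/L$.

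The two genuinely substantive points are the surjectivity in (i) and the extension in (ii). Surjectivity rests on the completeness of $\{p_{\nu,k}\}$ in $L^2(\Lambda,d\mu)$, which is not formal: it relies on the determinacy of the moment problem for the continuous dual Hahn polynomials (cf.\ \cite{wilson}) rather than on orthogonality alone. As for the extension, phrasing it on the real form $\mathscr{X}$ is the natural choice, since there $xx^t=|x|^2$ is real and positive, which makes the membership $\lambda\mapsto R^{-1}\varphi_\lambda(x)\in L^2(\Lambda,d\mu)$ and the termwise integration elementary, and since applying $R$ identifies the formula with the spherical inversion for radial functions on $\mathscr{X}=H/L$; this is what gives the statement content for arbitrary $L$-invariant functions beyond the polynomials.
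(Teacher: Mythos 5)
Your proposal is correct and follows essentially the same route as the paper: both arguments rest on the identity $\tilde{f}(\lambda)=(f,R^{-1}\varphi_{\lambda})_{\nu}$ deduced from the intertwining lemma, the expansion of $R^{-1}\varphi_{\lambda}$ in the orthonormal system $e_k$ with continuous dual Hahn coefficients $p_{\nu,k}$, the orthogonality of those coefficients with respect to $\mu$, and a density-plus-continuity argument to pass from polynomials to arbitrary $L$-invariant functions on $\mathscr{X}$. The differences are only in execution: you justify the interchange of sum and integral by Cauchy--Schwarz in $L^2(\Lambda,d\mu)$ where the paper uses the pointwise bound $|R^{-1}\varphi_{\lambda}(x)|\leq C(x)(1-|x|^2)^{-\nu/2}$ together with Jensen's inequality, and you make explicit two points the paper leaves implicit, namely that $\{e_k\}$ is an orthonormal basis of $\mathscr{H}_{\nu}^L$ and that surjectivity onto $L^2(\Lambda,d\mu)$ requires completeness of the dual Hahn system.
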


\begin{proof}
Writing
\begin{eqnarray*}
R^{-1}e_{\lambda,b}=\sum_{\alpha} 
c_{\alpha}(\lambda,b)z^{\alpha}=\sum_ke_{\lambda,b,k}
\end{eqnarray*}
and
\begin{eqnarray*}
R^{-1}\varphi_{\lambda}(z)=\sum_{\alpha}c_{\alpha}(\lambda)z^{\alpha}=\sum_kp_k(\lambda)e_k(z),
\end{eqnarray*}
we see that the coefficients and polynomials of homogeneous degree $k$ are 
related by
\begin{eqnarray}
c_{\alpha}(\lambda)=\int_Bc_{\alpha}(\lambda,b)db
\end{eqnarray}
and
\begin{eqnarray}
p_k(\lambda)e_k(z)=\int_Be_{\lambda,b,k}(z)db
\end{eqnarray}
respectively.
Therefore we have
\begin{eqnarray*}
\tilde{f}(\lambda,b)&=&\sum_k \langle f,e_{\lambda,b,k}\rangle_{\nu}\\
&=&\sum_k \langle \int_L\pi_{\nu}(l)fdl,e_{\lambda,b,k}\rangle_{\nu}\\
&=&\sum_k \langle f, 
\int_L\pi_{\nu}(l^{-1})e_{\lambda,b,k}dl\rangle_{\nu}\\
&=&\sum_k \langle f, 
\int_L\pi_{\lambda}(l)e_{\lambda,b,k}dl\rangle_{\nu}\\
&=&(f,R^{-1}\varphi_{\lambda})_{\nu}.
\end{eqnarray*}
This proves the $L$-invariance.
Moreover, we have
\begin{eqnarray*}
(f,R^{-1}\varphi_{\lambda})_{\nu}=\sum_k \overline{p_k(\lambda)}\langle 
f,e_k \rangle_{\nu}.
\end{eqnarray*}
Hence
\begin{eqnarray*}
\int_{\Lambda}\|\tilde{f}\|_{\lambda}^2d\mu=\sum_k |\langle 
f,e_k \rangle_{\nu}|^2=\|f\|_{\nu}^2.
\end{eqnarray*}
This proves the first part of the claim.

To prove the inversion formula, we now let $f$ be an $L$-invariant
polynomial and $x$ be a point in
$\mathscr{D} \bigcap \mathbb{R}^n$. Since we have an estimate of the form
\begin{eqnarray}
|R^{-1}\varphi_{\lambda}(x)| \leq (1-|x|^2)^{-\frac{\nu}{2}}C(x), 
\label{uppsk}
\end{eqnarray}
where $C$ is some function of $x$,
independently of $\lambda$, the integral
\begin{eqnarray*}
\int_{\Lambda}\tilde{f}(\lambda)R^{-1}\varphi_{\lambda}(x)d\mu(\lambda)
\end{eqnarray*}
makes sense for real $x$. We then have
\begin{eqnarray*}
\int_{\Lambda}\tilde{f}(\lambda)R^{-1}\varphi_{\lambda}(x)d\mu(\lambda)&=&
\sum_k \int_{\Lambda} \langle f,e_k 
\rangle_{\nu}\overline{p_k(\lambda)}R^{-1}\varphi_{\lambda}(x)d\mu(\lambda)\\
&=&\sum_k \langle f,e_k \rangle_{\nu}\int_{\Lambda} 
\sum_j\overline{p_k(\lambda)}p_j(\lambda)e_j(x)d\mu(\lambda)\\
&=&f(x).
\end{eqnarray*}
Now let $f \in \mathscr{H}_{\nu}^L$ be arbitrary. We choose a sequence of 
polynomials $f_n \in  \mathscr{H}_{\nu}^L$ such that
\begin{eqnarray*}
f=\lim f_n.
\end{eqnarray*}
Since the evaluation functionals are continuous, we have
\begin{eqnarray*}
f(x)=\lim_{n \rightarrow \infty} f_n(x)=\lim_{n \rightarrow 
\infty}\int_{\Lambda}\tilde{f_n}(\lambda)R^{-1}\varphi_{\lambda}(x)d\mu(\lambda)
\end{eqnarray*}
for every real point $x$. By Jensen's inequality and \eqref{uppsk}
\begin{eqnarray*}
&&\left|\int_{\Lambda}(\tilde{f}(\lambda)-\tilde{f_n}(\lambda))R^{-1}\varphi_{\lambda}(x)d\mu(\lambda)\right|^2 \\
&&\leq
\mu(\Lambda)\int_{\Lambda}|\tilde{f}(\lambda)-\tilde{f_n}(\lambda)|^2C(x)(1-|x|^2)^{-\nu}d\mu(\lambda).
\end{eqnarray*}
Hence
\begin{eqnarray*}
f(x)=\int_{\Lambda}\tilde{f}(\lambda)R^{-1}\varphi_{\lambda}(x)d\mu(\lambda).
\end{eqnarray*}
Thus the inversion formula holds for real points, $x$. To see that the 
formula holds for arbitrary points
when $f$ is a polynomial, we note
that both the left hand- and the right hand side of the formula define 
holomorphic functions on $\mathscr{D}$.
Since they agree on the totally real form $\mathscr{X}$, they are 
equal.
\end{proof}

\begin{thm}[The Plancherel Theorem] \label{intertw}
For $\nu>(n-2)/2$, the Fourier-Helgason transform is a unitary isomorphism from the 
$H$-modules
$\mathscr{H}_{\nu}$ onto the $H$-module 
$\int_{\Lambda}H_{\lambda}d\mu$, i.e.,
$$(\pi_{\nu}(h)f)^{\widetilde{}}(\lambda,b)=\tau_{\lambda}(h)\widetilde{f}(\lambda,b),$$
for $h \in H$, and
$$\|f\|_{\nu}^2=\int_{\Lambda}\|\tilde{f}\|_{\lambda}^2d\mu.$$
\end{thm}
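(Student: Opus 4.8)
The plan is to deduce the two assertions --- the intertwining relation and the isometry --- from three earlier ingredients: the covariance Lemma relating $\pi_{\nu}$ and $\tau_{\lambda}$ through the kernels $R^{-1}e_{\lambda,b}$, the isometry of the transform on the $L$-invariant subspace (Proposition \ref{RI}), and the abstract spherical Plancherel decomposition (Theorem \ref{abstr}), whose Plancherel measure I will identify with the orthogonalising measure $\mu$.

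First I would establish the intertwining relation for polynomials $f$. Starting from $\widetilde{\pi_{\nu}(h)f}(\lambda,b)=(\pi_{\nu}(h)f,R^{-1}e_{\lambda,b})_{\nu}$ and using that the pairing $(\cdot,\cdot)_{\nu}$ extends the $\pi_{\nu}$-invariant inner product, this equals $(f,\pi_{\nu}(h^{-1})R^{-1}e_{\lambda,b})_{\nu}$. The covariance Lemma gives $\pi_{\nu}(h^{-1})R^{-1}e_{\lambda,b}=\tau_{\lambda}(h)R^{-1}e_{\lambda,b}$, and pulling the fibrewise $\tau_{\lambda}(h)$-action (which touches only the $b$-variable) outside the pairing yields $\tau_{\lambda}(h)\widetilde f(\lambda,b)$. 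The one point demanding care is the behaviour of the factor $J_{h^{-1}}(b)^{(i\lambda+\rho)/n}$ under the antilinear slot of $(\cdot,\cdot)_{\nu}$; this is handled using that $J_{h}(b)$ is real and positive for $b\in B=S^{n-1}$ and $h\in H$, so the conjugation is harmless on the support of $\mu$. In particular $\tau_{\lambda}(h)$ is unitary on each fibre $H_{\lambda}$, which is all that the norm identity requires.

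For the Plancherel formula I would argue by reduction to the $L$-invariant subspace. On $\mathscr H_{\nu}^{L}$ the transform is already an isometry onto $L^{2}(\Lambda,d\mu)$ by Proposition \ref{RI}, and the coefficients $p_{k}(\lambda)$ identify $\mu$ with the orthogonalising measure of the continuous dual Hahn polynomials, hence with the Plancherel measure of Theorem \ref{abstr}; the spherical functions appearing are exactly the $\varphi_{\lambda}$ of Corollary \ref{cayleycor}, and the fibre representations are the $\tau_{\lambda}$ (continuous part) and $\sigma_{\alpha}$ (the discrete point $\lambda=i(\nu-(n-1)/2)$). The crucial step is the matrix-coefficient identity $\langle\pi_{\nu}(h)\mathbf 1,\mathbf 1\rangle_{\nu}=\int_{\Lambda}\langle\tau_{\lambda}(h)\widetilde{\mathbf 1},\widetilde{\mathbf 1}\rangle_{\lambda}\,d\mu$: the left-hand side is the state $\Phi$ of Theorem \ref{abstr}, decomposed as $\int_{\Lambda}\varphi_{\lambda}(h)\,d\mu$ weighted by $|\widetilde{\mathbf 1}(\lambda)|^{2}$, while the right-hand side equals the same integral because the $\tau_{\lambda}$-matrix coefficient of the spherical vector is $\varphi_{\lambda}$. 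Polarising this identity, combining it with the fibrewise unitarity of $\tau_{\lambda}$ and the intertwining relation, and using that $\mathbf 1$ is an $L$-invariant cyclic vector so that $\mathrm{span}\{\pi_{\nu}(h)\mathbf 1\}$ is dense, I obtain $\|f\|_{\nu}^{2}=\int_{\Lambda}\|\widetilde f\|_{\lambda}^{2}\,d\mu$ on a dense domain; the transform then extends to an isometry of all of $\mathscr H_{\nu}$.

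Finally, surjectivity: the image is $H$-invariant by the intertwining relation and already contains the $L$-invariant subspace $L^{2}(\Lambda,d\mu)$ by Proposition \ref{RI}; since each irreducible fibre $\pi_{\lambda}$ is generated by its spherical vector, the image is dense, and being the isometric image of a complete space it is closed, hence all of $\int_{\Lambda}H_{\lambda}\,d\mu$. The main obstacle I anticipate is the identification step: matching the abstractly produced Plancherel data of Theorem \ref{abstr} with the explicit triple $(\Lambda,\mu,H_{\lambda})$, and in particular controlling the fibre norm $\|\widetilde f\|_{\lambda}$ at the discrete point, where $H_{\lambda}$ is the Sobolev-type space $\mathscr C_{\alpha}$ rather than $L^{2}(B)$, so that the measurable-field structure and the polarised matrix-coefficient identity remain valid across that atom of $\mu$.
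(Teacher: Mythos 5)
There are two genuine gaps in your proposal, both of a circular nature, and they sit exactly at the points where the paper's proof is structured differently. First, your opening step computes $\widetilde{\pi_{\nu}(h)f}$ for a polynomial $f$ by writing it as $(\pi_{\nu}(h)f,R^{-1}e_{\lambda,b})_{\nu}$ and moving $\pi_{\nu}(h)$ across the pairing. But the pairing $(\cdot,\cdot)_{\nu}$, and with it the Fourier--Helgason transform, is defined only when the first entry is a \emph{polynomial} (that is what makes the sum $\sum_k\langle f,g_k\rangle_{\nu}$ finite), and $\pi_{\nu}(h)f=J_{h^{-1}}(\cdot)^{\nu/n}f(h^{-1}\cdot)$ is not a polynomial: the group action does not preserve $\mathcal{P}$, only the Lie algebra action does. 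Moreover, the identity $(\pi_{\nu}(h)f,g)_{\nu}=(f,\pi_{\nu}(h^{-1})g)_{\nu}$ is not an instance of unitarity, because $g=R^{-1}e_{\lambda,b}$ does not lie in $\mathscr{H}_{\nu}$ (for $\lambda$ in the continuous spectrum $\sum_k|p_k(\lambda)|^2=\infty$); it is a rearrangement of an infinite sum of homogeneous components that the group action scrambles, and justifying it essentially requires the boundedness of the transform you are trying to prove. This is precisely why the paper proves the theorem in the order: (i) intertwining for the Lie algebra action, which \emph{does} map polynomials to polynomials; (ii) norm preservation, using cyclicity of $1$ under $\mathfrak{U}(\mathfrak{h})$, the $L$-averaging projection $p$, and Proposition \ref{RI}; and only then (iii) exponentiation to the group on analytic vectors. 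Tellingly, when the paper later performs exactly your manipulation (in the proof of the inversion formula), it cites Theorem \ref{intertw} to license it. In your proposal steps 1 and 2 are mutually dependent: the polarisation argument in step 2 needs $\widetilde{\pi_{\nu}(h)1}=\tau_{\lambda}(h)\tilde{1}$ from step 1, while step 1 is not even well defined before the boundedness delivered by step 2.

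Second, your matrix-coefficient identity $\langle\pi_{\nu}(h)1,1\rangle_{\nu}=\int_{\Lambda}\langle\tau_{\lambda}(h)\tilde{1},\tilde{1}\rangle_{\lambda}\,d\mu$ is justified by "decomposing the state $\Phi$ of Theorem \ref{abstr} as $\int_{\Lambda}\varphi_{\lambda}(h)\,d\mu$". Theorem \ref{abstr} only produces \emph{some} unidentified regular Borel measure on the set of positive definite spherical functions; the claim that this measure is the Hahn-orthogonalising measure $\mu$ (weighted by $|\tilde{1}(\lambda)|^2$) is exactly the identification you yourself flag as "the main obstacle", and nothing in the proposal closes it. The paper runs the logic in the opposite direction: the norm identity is proved first by the concrete argument, and only afterwards is the resulting formula for the state $f\mapsto\langle\pi_{\nu}(f)1,1\rangle_{\nu}$ matched, via uniqueness, with the abstract decomposition -- that match is what yields surjectivity. (Your identity can in fact be proved non-circularly: apply the inversion formula of Proposition \ref{RI}(ii) to $f=1$ on $\mathscr{X}$ to get $h(x,x)^{\nu/2}=\int_{\Lambda}\varphi_{\lambda}(x)\,d\mu(\lambda)$, and note that $\langle\pi_{\nu}(h)1,1\rangle_{\nu}=J_{h^{-1}}(0)^{\nu/n}=h(h^{-1}0,h^{-1}0)^{\nu/2}$ while $\langle\tau_{\lambda}(h)1,1\rangle_{L^2(B)}=\varphi_{\lambda}(h^{-1}0)$ by Corollary \ref{cayleycor}; but this argument is absent from your text.) On the positive side, your surjectivity argument -- the image is closed, $H$-invariant, and contains all $L$-invariant sections, while each fibre is irreducible with a spherical vector -- is a sound alternative to the paper's identification of the transform with the abstract intertwining operator, once the two gaps above are repaired.
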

\begin{proof}
We divide the proof into three steps:\\
$(i)$ We prove that the Fourier-Helgason transform intertwines the action 
of the Lie algebra
of $H$.\\
$(ii)$ We use $(i)$ to prove that the norm is preserved.\\
$(iii)$ We conclude that the group actions are intertwined from $(i)$ and 
$(ii)$.

We will see that these properties actually imply that the Fourier-Helgason 
transform is surjective.

Consider now the corresponding representations of the Lie algebra 
$\mathfrak{h}$. These will also be denoted by $\pi_{\nu}$
and $\tau_{\lambda}$ respectively. Moreover they extend naturally to 
representations of the universal
enveloping algebra, $\mathfrak{U}(\mathfrak{h})$, of $\mathfrak{h}$. \\
Let $X \in \mathfrak{h}$. If $f$ is a polynomial in $\mathscr{H}_{\nu}$, 
then differentiation of the mapping
\begin{eqnarray*}
t \mapsto J_{\exp tX}(z)^{\nu/n}f((\exp tX)z)
\end{eqnarray*}
at $t=0$ shows that
$\pi_{\nu}(X)f$ is also a polynomial, and

\begin{eqnarray*}
\widetilde{\pi_{\nu}(X)f}(\lambda,b)&=&\sum_k \langle 
\pi_{\nu}(X)f,e_{\lambda,b,k} \rangle_{\nu}\\
&=&\sum_k \langle f, -\pi_{\nu}(X)e_{\lambda,b,k} \rangle_{\nu}\\
&=&\sum_k \langle f, \tau_{\lambda}(X)e_{\lambda,b,k} \rangle_{\nu}\\
&=&(f,\tau_{\lambda}(X)R^{-1}e_{\lambda,b})_{\nu}\\
&=&\tau_{\lambda}(X)(f,R^{-1}e_{\lambda,b})_{\nu},
\end{eqnarray*}
which proves $(i)$.

To prove the second step, we recall that the adjoint representation of $L$ 
on $\mathfrak{h}$ extends
to an action on $\mathfrak{U}(\mathfrak{h})$ as homomorphisms of an 
associative algebra.
The $L$-invariant elements in $\mathfrak{U}(\mathfrak{h})$ form a 
subalgebra,
$\mathfrak{U}(\mathfrak{h})^{L}$. We let $p$ denote the projection $$X 
\mapsto \int_LAd(l)Xdl$$ of
$\mathfrak{U}(\mathfrak{h})$ onto $\mathfrak{U}(\mathfrak{h})^L$.
This action of $L$
connects the representations of $H$ and $\mathfrak{U}(\mathfrak{h})$ 
according to the following identity:
$$\pi_{\nu}(l)\pi_{\nu}(X)\pi_{\nu}(l^{-1})=\pi_{\nu}(Ad(l)X),$$ for $l \in L$ and $X \in 
\mathfrak{U}(\mathfrak{h})$.

Since the vector $1 \in \mathscr{H}_{\nu}$ is cyclic for the 
representation of $H$, it is also cyclic for
the representation of $\mathfrak{U}(\mathfrak{h})$. Hence it suffices to 
prove that the norm is
preserved for elements of the form $\pi_{\nu}(X)1$, where $X \in 
\mathfrak{U}(\mathfrak{h})$.
In the following equalities, we temporarily let $\tau$ denote the direct 
integral of the representations
$\tau_{\lambda}$, and analogously we let $\langle, \rangle$ denote the 
direct integral of the corresponding
inner products.
\begin{eqnarray*}
\langle \pi_{\nu}(X)1, \pi_{\nu}(X)1 \rangle_{\nu}&=&\langle 
\pi_{\nu}(X)^{*}\pi_{\nu}(X)1, 1 \rangle_{\nu}\\
&=&\langle -\pi_{\nu}(X^2)1, 1 \rangle_{\nu}
\end{eqnarray*}
Since the vector $1$ is $L$-invariant, the last expression equals
$\langle -\pi_{\nu}(p(X^2))1, 1 \rangle_{\nu}$, and by proposition 
\eqref{RI}, we have
$$\langle -\pi_{\nu}(p(X^2))1, 1 \rangle_{\nu}=\langle 
-\widetilde{\pi_{\nu}(p(X^2))1}, \tilde{1} \rangle.$$
By $(i)$, the expression on the right-hand side equals $\langle 
-\tau(p(X^2))\tilde{1}, \tilde{1} \rangle$,
and since $\tilde{1}$ is $L$-invariant, we have
$$\langle -\tau(p(X^2))\tilde{1}, \tilde{1} \rangle=\langle 
-\tau(X^2)\tilde{1}, \tilde{1} \rangle.$$
Thus $(ii)$ is proved.

To prove $(iii)$, we recall the following equalities (on the respective
dense spaces of analytic vectors):
\begin{eqnarray*}
\pi_{\nu}(\exp(X))=e^{\pi_{\nu}(X)}\\
\tau_{\lambda}(\exp(X))=e^{\tau_{\lambda}(X)}.
\end{eqnarray*}
From this and the facts that $H$ is connected and that the 
Fourier-Helgason transform is bounded operator, we immediately see
that $(iii)$ holds.

To see that the operator is surjective, note that
by $(ii)$ and $(iii)$
\begin{eqnarray*}
\langle \pi_{\nu}(f)1,1\rangle_{\nu}=\int_{\Lambda}\langle 
\tau_{\lambda}\tilde{1}(\lambda,\cdot),\tilde{1}(\lambda,\cdot)
\rangle_{\lambda}d\mu,
\end{eqnarray*}
for $f \in L^1(H)^{\#}$, i.e., we can write the positive functional
\begin{eqnarray*}
f \mapsto \langle \pi_{\nu}(f)1,1\rangle_{\nu}
\end{eqnarray*}
as an integral of pure states with respect to some measure. By uniqueness, 
it is the measure in
Theorem~\ref{abstr}. Since the Fourier-Helgason transform intertwines the 
group action, it is the intertwining operator
constructed in Theorem~\ref{abstr}. Thus it is surjective.
\end{proof}

\begin{thm}[The Inversion Formula]
If $f$ is a polynomial in $\mathscr{H}_{\nu}$, then
\begin{eqnarray}
f(z)=\int_{\Lambda}\int_B 
\widetilde{f}(\lambda,b)R^{-1}e_{\lambda,b}(z)dbd\mu(\lambda).
\end{eqnarray}
\end{thm}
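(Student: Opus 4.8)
The plan is to exhibit the integral operator
$$S\psi(z):=\int_{\Lambda}\int_B \psi(\lambda,b)\,R^{-1}e_{\lambda,b}(z)\,db\,d\mu(\lambda)$$
as a left inverse of the Fourier--Helgason transform on polynomials, i.e. to prove $S\widetilde{f}=f$. The first task is an analytic one: for fixed $x\in\mathscr{X}$ I would show, using an estimate of the type \eqref{uppsk} now for $R^{-1}e_{\lambda,b}$ and the compactness of $B=S^{n-1}$, that the fibrewise pairing $\psi(\lambda,\cdot)\mapsto\int_B\psi(\lambda,b)R^{-1}e_{\lambda,b}(x)\,db$ is a bounded functional on $H_{\lambda}$ whose norm is $\mu$-integrable in $\lambda$. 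By Cauchy--Schwarz this makes $\psi\mapsto S\psi(x)$ continuous on $\int_{\Lambda}H_{\lambda}\,d\mu$ for each $x\in\mathscr{X}$, and since each $R^{-1}e_{\lambda,b}$ is holomorphic and the convergence is locally uniform, $S\widetilde{f}$ defines a holomorphic function on $\mathscr{D}$. Exactly as at the end of the proof of Proposition~\ref{RI}, it then suffices to prove $S\widetilde{f}=f$ on the totally real form $\mathscr{X}$.

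Next I would dispose of the $L$-invariant case directly from the radial inversion formula. If $f\in\mathscr{H}_{\nu}^{L}$ is a polynomial, then by Proposition~\ref{RI}(i) the transform $\widetilde{f}(\lambda,b)$ is $L$-invariant in $b$; since $L$ acts transitively on $B$, it is independent of $b$, say $\widetilde{f}(\lambda,b)=\widetilde{f}(\lambda)$. The relations $c_{\alpha}(\lambda)=\int_B c_{\alpha}(\lambda,b)\,db$ give $R^{-1}\varphi_{\lambda}=\int_B R^{-1}e_{\lambda,b}\,db$, whence
$$S\widetilde{f}=\int_{\Lambda}\widetilde{f}(\lambda)\Bigl(\int_B R^{-1}e_{\lambda,b}\,db\Bigr)d\mu(\lambda)=\int_{\Lambda}\widetilde{f}(\lambda)\,R^{-1}\varphi_{\lambda}\,d\mu(\lambda)=f$$
by the radial inversion formula \eqref{radialinv}. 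In particular $S\widetilde{1}=1$.

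To pass from $L$-invariant polynomials to the cyclic orbit I would exploit the two intertwining properties together with the cyclicity of $1$, mimicking step $(iii)$ of the proof of Theorem~\ref{intertw}. The transform intertwines $\pi_{\nu}$ with the direct integral of the $\tau_{\lambda}$ (Theorem~\ref{intertw}), while the transformation relation $\pi_{\nu}(g)\tau_{\lambda}(g)R^{-1}e_{\lambda,b}=R^{-1}e_{\lambda,b}$, combined with the change of variables $d\sigma(g\zeta)=J_{g}(\zeta)^{(n-1)/n}d\sigma(\zeta)$ of Lemma~\ref{surface}, shows that $S$ intertwines the $\tau_{\lambda}$-action with $\pi_{\nu}$. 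Hence $S\widetilde{\pi_{\nu}(X)f}=\pi_{\nu}(X)\,S\widetilde{f}$ for every $X\in\mathfrak{U}(\mathfrak{h})$, and since $S\widetilde{1}=1$ we obtain $S\widetilde{\pi_{\nu}(X)1}=\pi_{\nu}(X)1$ for all such $X$. Thus $S\widetilde{g}=g$ holds pointwise for every $g$ in the dense orbit $\mathfrak{U}(\mathfrak{h})1$, which consists of polynomials.

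Finally I would remove the density restriction by an approximation argument that never requires $S$ to be globally bounded. Given an arbitrary polynomial $f$, choose $g_{n}\in\mathfrak{U}(\mathfrak{h})1$ with $g_{n}\to f$ in $\mathscr{H}_{\nu}$; by the isometry of Theorem~\ref{intertw}, $\widetilde{g_{n}}\to\widetilde{f}$ in $\int_{\Lambda}H_{\lambda}\,d\mu$. For each $x\in\mathscr{X}$ the continuity established in the first paragraph gives $S\widetilde{g_{n}}(x)\to S\widetilde{f}(x)$, while continuity of point evaluation in the reproducing kernel space $\mathscr{H}_{\nu}$ gives $g_{n}(x)\to f(x)$; since $S\widetilde{g_{n}}(x)=g_{n}(x)$, we conclude $S\widetilde{f}(x)=f(x)$ for $x\in\mathscr{X}$. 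As both sides are holomorphic on $\mathscr{D}$ and agree on $\mathscr{X}$, they coincide on $\mathscr{D}$. The main obstacle I anticipate is the analytic bookkeeping rather than the algebra: producing the $\lambda$- and $b$-uniform majorant for $R^{-1}e_{\lambda,b}(x)$ that simultaneously legitimises the holomorphy, the Cauchy--Schwarz continuity, and the interchange of $\pi_{\nu}(X)$ with the double integral, and checking that the Jacobian factors in the $B$-integral conspire via Lemma~\ref{surface} to make $S$ genuinely intertwine the two actions. Everything else is a direct assembly of Proposition~\ref{RI} and Theorem~\ref{intertw}.
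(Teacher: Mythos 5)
Your strategy is genuinely different from the paper's, and for $\nu\geq (n-1)/2$ it can be pushed through. The paper never introduces your synthesis operator $S$ as a map on the direct integral: for each fixed $h\in H$ it forms the $L$-average $f_{1}=\int_{L}\pi_{\nu}(l)\pi_{\nu}(h)f\,dl$, applies the radial inversion formula of Prop.~\ref{RI} to $f_{1}$ at the origin, and then unwinds $\tilde{f_{1}}(\lambda)=\int_{B}\tilde{f}(\lambda,b)J_{h}(b)^{\frac{i\lambda+\rho}{n}}db$ using exactly the two ingredients you also invoke (the relation $\pi_{\nu}(g)\tau_{\lambda}(g)R^{-1}e_{\lambda,b}=R^{-1}e_{\lambda,b}$ and Lemma~\ref{surface}), plus the identity $J_{h}(b)^{\frac{i\lambda+\rho}{n}}=J_{h^{-1}}(0)^{\nu/n}R^{-1}e_{\lambda,b}(h^{-1}0)$; transitivity of $H$ on $\mathscr{X}$ then gives the formula at every real point, and holomorphy finishes. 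That argument is purely pointwise, so no continuity of $S$ on $\int_{\Lambda}H_{\lambda}d\mu$ is ever required. Your cyclicity-plus-density scheme replaces this with two limiting processes, which is where the extra burden sits. Your algebraic core is fine: step 2 is the paper's own use of \eqref{radialinv}, and your intertwining computation does close up, though you should track that the sesquilinear pairing conjugates the multiplier (so the transform really intertwines with $J^{\frac{-i\lambda+\rho}{n}}$), while the change of variables in $S$ produces the same flip; the two flips cancel in $S\widetilde{\pi_{\nu}(X)f}=\pi_{\nu}(X)S\widetilde{f}$, exactly as they cancel in the paper's chain of equalities.

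The genuine gap is at the discrete point of $\Lambda$, i.e.\ precisely in the range $(n-2)/2<\nu<(n-1)/2$ where the new phenomenon of this paper occurs. There the fiber $H_{\lambda_{0}}$, $\lambda_{0}=i(\nu-\tfrac{n-1}{2})$, is not $L^{2}(B)$ but $\mathscr{C}_{\nu/n}$, and its norm is \emph{weaker} than the $L^{2}(B)$-norm (the Gram operator with kernel $h(\zeta,\eta)^{-\nu}=2^{-\nu}(1-(\zeta,\eta))^{-\nu}$ is bounded on $L^{2}(B)$ when $\nu<\tfrac{n-1}{2}$). Your continuity claim for $\psi\mapsto S\psi(x)$ rests on a pointwise majorant for $R^{-1}e_{\lambda,b}(x)$, compactness of $B$, and Cauchy--Schwarz; that yields boundedness of the fiber functional with respect to $\|\cdot\|_{L^{2}(B)}$, which does \emph{not} imply boundedness with respect to the weaker norm $\|\cdot\|_{\mathscr{C}_{\nu/n}}$. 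Since Theorem~\ref{intertw} only gives $\widetilde{g_{n}}\to\widetilde{f}$ in the direct-integral norm, whose discrete-fiber component is the $\mathscr{C}_{\nu/n}$-norm, your final limit $S\widetilde{g_{n}}(x)\to S\widetilde{f}(x)$ does not follow from what you prove. The repair is to show that $b\mapsto R^{-1}e_{\lambda_{0},b}(x)$ lies in the dual of $\mathscr{C}_{\nu/n}$: it is real-analytic in $b$, so its spherical-harmonic coefficients decay exponentially, whereas the eigenvalues of the Gram operator decay only polynomially in the degree; hence the functional is bounded. This is a fixable but missing idea, not bookkeeping, and it is needed exactly in the parameter range that the paper's pointwise argument sidesteps entirely.
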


\begin{proof}
Take $h \in H$. Define
\begin{eqnarray*}
f_1(z)=\int_L \pi_{\nu}(l)\pi_{\nu}(h)f(z)dl
\end{eqnarray*}
This is a radial function, and we have that
\begin{eqnarray}
f_1(0)=J_{h^{-1}}(0)^{\frac{\nu}{n}}f(h^{-1}0). \label{radlik}
\end{eqnarray}
Prop.~\ref{RI} gives
\begin{eqnarray}
f_1(0)=\int_{\Lambda}\tilde{f_1}(\lambda)R^{-1}\varphi_{\lambda}(z)d\mu(\lambda). 
\label{inv}
\end{eqnarray}
Moreover
\begin{eqnarray}
\widetilde{f_1}(\lambda)=(f_1,R^{-1}\varphi_{\lambda})_{\nu}
&=&(\int_L\pi_{\nu}(l)\pi_{\nu}(h)fdl,R^{-1}\varphi_{\lambda})_{\nu}\nonumber 
\\
&=&(\pi_{\nu}(h)f,R^{-1}\varphi_{\lambda})_{\nu}. \nonumber \\
\end{eqnarray}
By Thm.\ \ref{intertw} we have
\begin{eqnarray}
(\pi_{\nu}(h)f,R^{-1}\varphi_{\lambda})_{\nu} \nonumber
&=&(f,\pi_{\nu}(h^{-1})R^{-1}\varphi_{\lambda})_{\nu}\nonumber \\
&=&(f,\int_B\pi_{\nu}(h^{-1})R^{-1}e_{\lambda,b}\,db)_{\nu}\nonumber\\
&=&(f,\int_B\pi_{\nu}(h^{-1})R^{-1}e_{\lambda,b}\,db)_{\nu}\nonumber\\
&=&(f,\int_B\tau_{\lambda}(h)R^{-1}e_{\lambda,b}\,db)_{\nu}\nonumber\\
&=&(f,\int_BJ_{h^{-1}}(b)^{\frac{i\lambda+\rho}{n}} 
R^{-1}e_{\lambda,h^{-1}b}\,db)_{\nu}.\nonumber
\end{eqnarray}
The integrand above has a power series expansion where the coefficients 
are functions of $b$. If we integrate, we
obtain a holomorphic functions for which the coefficients in the power 
series expansion are obtained by integrating the aforementioned 
coefficients over $B$.
Hence we can proceed as follows.
\begin{eqnarray}
(f,\int_BJ_{h^{-1}}(b)^{\frac{i\lambda+\rho}{n}} 
R^{-1}e_{\lambda,h^{-1}b}\,db)_{\nu}
&=&\int_BJ_{h^{-1}}(b)^{\frac{-i\lambda+\rho}{n}} 
(f,R^{-1}e_{\lambda,h^{-1}b})_{\nu}db\nonumber\\
&=&\int_BJ_{h^{-1}}(b)^{\frac{-i\lambda+\rho}{n}}\widetilde{f}(\lambda,h^{-1}b)db\nonumber\\
&=&\int_BJ_{h^{-1}}(hb)^{\frac{-i\lambda+\rho}{n}}\widetilde{f}(\lambda,b)J_h(b)^{\frac{n-1}{n}}db 
\nonumber\\
&=&\int_B\widetilde{f}(\lambda,b)J_h(b)^{\frac{i\lambda+\rho}{n}}db 
\label{sista}.
\end{eqnarray}
It is easy to see that
\begin{eqnarray}
J_h(b)^{\frac{i\lambda+\rho}{n}}=J_{h^{-1}}(0)^{\frac{\nu}{n}}R^{-1}e_{\lambda,b}(h^{-1}0),
\end{eqnarray}
and so combining \eqref{radlik}, \eqref{inv} and \eqref{sista} finally 
yields
\begin{eqnarray}
f(h^{-1}0)=\int_{\Lambda}\int_B 
\widetilde{f}(\lambda,b)R^{-1}e_{\lambda,b}(h^{-1}0)db\,d\mu.
\end{eqnarray}
Thus the inversion formula holds for real points, hence for all points by 
the same argument as in the proof of
Prop.~\ref{RI}.
\end{proof}

\end{section}

\begin{section}{Realisation of the discrete part of the decomposition}
We recall the earlier defined complementary series representations.
The following theorem states that $\sigma_{\nu/n}$ is the representation 
corresponding to the singular point in the decomposition theorem.
\begin{thm}
The operator $T_{\nu}$ defined by the formula
\begin{eqnarray*}
(T_{\nu}f)(z)=\int_{S^{n-1}}f(\zeta)K_{\nu}(z,\zeta)d\sigma(\zeta)
\end{eqnarray*}
is a unitary $H$-intertwining operator from $\mathscr{C}_{\nu/n}$ onto an 
irreducible $H$-submodule of $\mathscr{H}_{\nu}$.
\end{thm}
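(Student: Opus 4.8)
The plan is to verify in turn that $T_\nu$ maps $\mathscr{C}_{\nu/n}$ isometrically into $\mathscr{H}_\nu$, that it intertwines $\sigma_{\nu/n}$ with the restriction of $\pi_\nu$ to $H$, and finally that its image is irreducible. Throughout I use that on the real sphere the kernel simplifies: for $\zeta,\eta\in S^{n-1}$ one has $\zeta\zeta^t=\eta\eta^t=1$ and $\langle\zeta,\eta\rangle=(\zeta,\eta)$, so $K_\nu(\zeta,\eta)=h(\zeta,\eta)^{-\nu}=(2-2(\zeta,\eta))^{-\nu}$; and that $K=h^{-n}$ in the definition of $\mathscr{C}_\alpha$, so that with $\alpha=\nu/n$ the defining norm reads $\|f\|_{\mathscr{C}_{\nu/n}}^2=\int_{S^{n-1}}\int_{S^{n-1}}f(\zeta)\overline{f(\eta)}K_\nu(\zeta,\eta)\,d\sigma(\zeta)d\sigma(\eta)$.

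First I would establish well-definedness together with the isometry by approximation from inside $\mathscr{D}$. For $r<1$ set $T_\nu^{(r)}f(z)=\int_{S^{n-1}}f(\zeta)K_\nu(z,r\zeta)\,d\sigma(\zeta)$; since each $K_\nu(\cdot,r\zeta)$ is a genuine element of $\mathscr{H}_\nu$, this is a continuous superposition of reproducing kernels, and the inner-product rule \eqref{inner} gives
$$\|T_\nu^{(r)}f\|_\nu^2=\int_{S^{n-1}}\int_{S^{n-1}}f(\zeta)\overline{f(\eta)}K_\nu(r\zeta,r\eta)\,d\sigma(\zeta)d\sigma(\eta).$$
The heart of the matter is the passage $r\to1$. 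Writing $t=(\zeta,\eta)$ one computes $h(r\zeta,r\eta)=(1-r^2)^2+2r^2(1-t)\ge 2r^2(1-t)$, so $K_\nu(r\zeta,r\eta)\le C\,(1-t)^{-\nu}$ uniformly for $r$ bounded away from $0$; since $(1-(\zeta,\eta))^{-\nu}$ is integrable over $S^{n-1}\times S^{n-1}$ precisely when $2\nu<n-1$ --- exactly the range $\tfrac{n-2}{2}\le\nu<\tfrac{n-1}{2}$ singled out in the definition of $\mathscr{C}_{\nu/n}$ --- dominated convergence yields $\|T_\nu^{(r)}f\|_\nu^2\to\|f\|_{\mathscr{C}_{\nu/n}}^2$. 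The same estimate applied to the Cauchy differences shows that $\{T_\nu^{(r)}f\}$ converges in $\mathscr{H}_\nu$; its limit agrees pointwise on $\mathscr{D}$ with $T_\nu f$, so $T_\nu f\in\mathscr{H}_\nu$ and $\|T_\nu f\|_\nu=\|f\|_{\mathscr{C}_{\nu/n}}$. This limiting argument, and the boundary integrability that drives it, is the step I expect to be the main obstacle.

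Next I would prove $T_\nu\circ\sigma_{\nu/n}(h)=\pi_\nu(h)\circ T_\nu$ for $h\in H$. Starting from $\sigma_{\nu/n}(h)f(\zeta)=J_{h^{-1}}(\zeta)^{\beta}f(h^{-1}\zeta)$ with $\beta=\tfrac{n-1-\nu}{n}$, I substitute $\zeta\mapsto h\zeta$ and use Lemma~\ref{surface}, namely $d\sigma(h\zeta)=J_h(\zeta)^{(n-1)/n}d\sigma(\zeta)$, together with $J_{h^{-1}}(h\zeta)=J_h(\zeta)^{-1}$; the Jacobian powers in $\zeta$ collect to $J_h(\zeta)^{\nu/n}$. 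Applying the kernel transformation \eqref{kernel} in the form $K_\nu(z,h\zeta)=J_{h^{-1}}(z)^{\nu/n}K_\nu(h^{-1}z,\zeta)J_h(\zeta)^{-\nu/n}$ (the boundary Jacobian $J_h(\zeta)$ being real for $\zeta\in S^{n-1}$) cancels the remaining $J_h(\zeta)^{\nu/n}$ and factors out $J_{h^{-1}}(z)^{\nu/n}$, leaving exactly $J_{h^{-1}}(z)^{\nu/n}(T_\nu f)(h^{-1}z)=\pi_\nu(h)(T_\nu f)(z)$. Since these manipulations are formal at the boundary, I would carry them out first for the interior approximants $T_\nu^{(r)}$ and then pass to the limit using the first step.

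Finally, $T_\nu$ is a nonzero bounded $H$-map that is isometric, hence injective with closed range, and its range is an $H$-invariant subspace of $\mathscr{H}_\nu$ isometrically isomorphic to $\mathscr{C}_{\nu/n}$. Because $\sigma_{\nu/n}$ is irreducible (as recalled above, cf.\ \cite{cow}), the image is an irreducible $H$-submodule of $\mathscr{H}_\nu$, which is the assertion. I would close by remarking that this image is the discrete constituent: under the identification $\sigma_{\nu/n}\cong\tau_{\lambda}$ at $\lambda=i(\nu-\tfrac{n-1}{2})$ it realises precisely the point mass of the Plancherel measure $\mu_\nu$ found earlier.
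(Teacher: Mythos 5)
Your proof is correct, but it reaches unitarity by a genuinely different route than the paper. The paper, after the same intertwining computation, never proves the isometry directly on all of $\mathscr{C}_{\nu/n}$: it computes only the single norm $\|T_\nu 1\|_\nu$, expanding $K_\nu(\cdot,\zeta)$ by the Faraut--Kor\'anyi formula and using $SO(n)$-harmonic analysis to kill all non-radial terms, which yields $T_\nu 1=\sum_k \frac{(\nu)_k(\nu-\frac{n-2}{2})_k}{k!(\frac{n}{2})_k}(zz^t)^k\in\mathscr{H}_\nu$ with $\|T_\nu 1\|_\nu=\|1\|_{\mathscr{C}_{\nu/n}}$; it then invokes irreducibility of the $\mathfrak{h}$-action on the algebraic sum $\bigoplus_m E_m(S^{n-1})$, cyclicity of $1$, and Schur's lemma to conclude $\langle T_\nu f,T_\nu g\rangle_\nu=c\langle f,g\rangle_{\mathscr{C}_{\nu/n}}$ with $c=1$. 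You instead prove the isometry for every smooth $f$ at once, via the dilated operators $T^{(r)}_\nu$, the inner-product rule \eqref{inner} for superpositions of kernels, and dominated convergence with majorant $(1-(\zeta,\eta))^{-\nu}$, whose integrability over $S^{n-1}\times S^{n-1}$ is exactly the constraint $\nu<\frac{n-1}{2}$ built into the definition of $\mathscr{C}_{\nu/n}$; irreducibility of the image then follows at the group level from Cowling's irreducibility of $\sigma_{\nu/n}$, with no need for Schur's lemma. Notably, your $r\to 1$ device is the same one the paper uses in Section 7 for $\nu=\frac{n-2}{2}$, but there the passage to the limit is asserted without the domination estimate you supply, so your argument adds rigor on that point. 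What each approach buys: the paper's computation identifies the image concretely (its $L$-fixed vector is an explicit series in $zz^t$, which is what ties this submodule to the point mass $\lambda=i(\nu-\frac{n-1}{2})$ of the Plancherel measure), while yours is shorter, avoids the special-function computation, and handles the boundary limit more carefully. One small wrinkle: your plan to prove the intertwining relation first for $T^{(r)}_\nu$ and then let $r\to 1$ does not work literally, since $rh\zeta\neq h(r\zeta)$ and so $T^{(r)}_\nu$ does not intertwine exactly for $r<1$; but this precaution is unnecessary --- for fixed $z\in\mathscr{D}$ the integrand is continuous on the compact sphere and the transformation rule \eqref{kernel} extends by continuity to $\zeta\in S^{n-1}$, so the direct computation you wrote out is already rigorous.
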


\begin{proof}
First of all we note that $T_{\nu}$ maps functions in 
$\mathscr{C}_{\nu/n}$ to holomorphic functions on $\mathscr{D}$ and thus 
$\pi_{\nu}$ has
a meaning on the range of $T_{\nu}$. We start by showing that 
$T_{\nu}$ is formally intertwining.
We have
\begin{eqnarray*}
T_{\nu}(\sigma_{\nu/n})f(z)
&=&\int_{S^{n-1}}J_{h^{-1}}(\zeta)^{-\nu/n + 
\frac{n-1}{n}}f(h^{-1}\zeta)K_{\nu}(z,\zeta)d\sigma(\zeta)\\
&=&\int_{S^{n-1}}J_{h}(\zeta)^{\nu/n - 
\frac{n-1}{n}}f(\zeta)K_{\nu}(z,h\zeta)
J_{h}(\zeta)^{\frac{n-1}{n}}d\sigma(\zeta)\\
&=&\int_{S^{n-1}}J_{h}(\zeta)^{\nu/n}f(\zeta)K_{\nu}(h^{-1}z,\zeta)
J_{h}(h^{-1}z)^{-\frac{\nu}{n}}J_{h}(\zeta)^{-\frac{\nu}{n}}d\sigma(\zeta)\\
&=&J_{h^{-1}}(z)^{\frac{\nu}{n}}\int_{S^{n-1}}f(\zeta)
K_{\nu}(h^{-1}z,\zeta)d\sigma(\zeta),
\end{eqnarray*}
i.e., $$T_{\nu}\sigma_{\nu/n}=\pi_{\nu}T_{\nu}.$$
The next step is to prove that the constant function $1$ is mapped into 
$\mathscr{H}_{\nu}$ and that its norm is preserved.
Note that for $\alpha = \nu/n, K(z,\zeta)^{\alpha}=K_{\nu}(z, \zeta)$,
and by Prop. ~\ref{FK} we have an expansion
\begin{eqnarray*}
K_{\nu}(\zeta,e_1)=\sum_{m-2k \geq 0}c_{m,k}(\nu)K_{(m-k,k)}(\zeta,e_1),
\end{eqnarray*}
where the coefficients $c_{m,k}(\nu)$ are given explicitly.
Now, since $K_{\nu}(\zeta,e_1)$ is $SO(n-1)$-invariant and the action of 
$SO(n-1)$ is linear, each $K_{(m,k)}(\zeta,e_1)$
must also be $SO(n-1)$-invariant. Hence, $K_{(m,k)}(\zeta,e_1)$ can be 
assumed to be $\phi_{m-2k}(\zeta)
(\zeta \zeta^t)^k$,
where
$\phi_{m-2k}$ is the unique element in $E_{m-2k}$ that assumes the value 1
in $e_1$.
Therefore
\begin{eqnarray}
&&\int_{S^{n-1}}K(\zeta,\eta)^{\alpha}d\sigma(\zeta)\label{kernel21}\\
&=&\int_L K_{\nu}(\zeta, 
le_1)dl
\int_L K_{\nu}(l^{-1}\zeta,e_1)dl \nonumber\\
&=&\sum_{m-2k \geq 0}c_{m,k}(\nu)\int_L 
(l^{-1}\zeta(l^{-1}\zeta)^t)^k\phi_{m-2k}(l^{-1}\zeta)dl
\label{kernel22}\\
&=&\sum_{m-2k \geq 0}c_{m,k}(\nu)(\zeta \zeta^t)^k\int_L 
\phi_{m-2k}(l^{-1}\zeta)dl \label{kernel23}
\end{eqnarray}
Since $SO(n)$ acts irreducibly on $E_{m-2k}$ and the function $\int_L 
\phi_{m-2k}(l^{-1}z)dl$ is an
$SO(n)$-invariant element in  $E_{m-2k}$ it must be identically zero 
unless $m-2k=0$.
Since
\begin{eqnarray*}
\|1\|_{\mathscr{C}_{\frac{\nu}{n}}}^2=\int_{S^{n-1}}\int_{S^{n-1}}K_{\nu}(\zeta,\eta)d\sigma(\zeta)d\sigma(\eta),
\end{eqnarray*}
the computation above implies that
\begin{eqnarray}
\|1\|_{\mathscr{C}_{\frac{\nu}{n}}}^2 
&=&\int_{S^{n-1}}\sum_{k=0}^{\infty}c_{2k,k}(\nu)(\zeta \zeta^t)^k
d\sigma(\zeta) \nonumber\\
&=&\sum_{k=0}^{\infty}c_{2k,k}(\nu) \nonumber
=\sum_{k=0}^{\infty}\frac{(\nu)_k(\nu-\frac{n-2}{2})_k}{\|(zz^t)^k\|_{\mathscr{F}}^2} 
\nonumber\\
&=&\sum_{k=0}^{\infty}\frac{(\nu)_k(\nu-\frac{n-2}{2})_k}{k! 
(\frac{n}{2})_k}\label{1}
\end{eqnarray}
On the other hand, the equalities \eqref{kernel21}-\eqref{kernel23} also 
show that
\begin{eqnarray}
T_{\nu}1(z)
&=&\sum_{k=0}^{\infty}\frac{(\nu)_k(\nu-\frac{n-2}{2})_k}{k! 
(\frac{n}{2})_k}(zz^t)^k \nonumber\\
&=&\sum_{k=0}^{\infty}\frac{((\nu)_k(\nu-\frac{n-2}{2})_k)^{1/2}}{(k! 
(\frac{n}{2})_k)^{1/2}}\frac{(zz^t)^k}{\|(zz^t)^k\|_{\nu}}.
\label{2}
\end{eqnarray}
If we compare \eqref{1} and \eqref{2}, we see that $T_{\nu}1 \in 
\mathscr{H}_{\nu}$ and that
$\|1\|_{\mathscr{C}_{\nu/n}}=\|1\|_{\nu}.$
Recall that
\begin{eqnarray*}
\mathscr{C}_{\nu/n}=\overline{\bigoplus_m \,E_m(S^{n-1})}
\end{eqnarray*}
and that the representation of $\mathfrak{h}$ on the algebraic sum 
$\bigoplus_m \,E_m(S^{n-1})$ is irreducible. Hence
\begin{eqnarray*}
\bigoplus_m 
\,E_m(S^{n-1})=\text{Span}_{\mathbb{C}}\{\sigma_{\nu/n}(X_1)\ldots\sigma_{\nu/n}(X_k)1 
| X_i \in \mathfrak{h}, 1 \leq i \leq k\}
\end{eqnarray*}
Since $T_{\nu}$ interwines the representations of $\mathfrak{h}$, we 
have that $\pi_{\nu}$ is an irreducible representation of $\mathfrak{h}$ 
on
the space  $T_{\nu}(\bigoplus_m \,E_m(S^{n-1})) \subseteq 
\mathscr{H}_{\nu}$. By Schur's lemma (\cite{knapp2}, ch.4)
\begin{eqnarray*}
\langle T_{\nu}f,T_{\nu}g \rangle_{\nu}=c\langle f,g 
\rangle_{\mathscr{C}_{\nu/n}},
\end{eqnarray*}
for some real constant $c$.
Putting, $f$ and $g$ equal to the constant function $1$ and applying, we 
see that $c=1$.
Therefore, $T_{\nu}$ extends to a unitary operator
\begin{eqnarray*}
T_{\nu}: \mathscr{C}_{\nu/n} \rightarrow 
\overline{T_{\nu}(\bigoplus_mE_m(S^{n-1}))}
\end{eqnarray*}
and we have proved the theorem.
\end{proof}

\end{section}

\begin{section}{Realisation of the minimal representation $\pi_{(n-2)/2}$}
In this section we show that the representation $\pi_{(n-2)/2}$ of $H$ is 
irreducible by realising it as a complementary series representation.

We recall the space $\mathscr{C}_{\nu/n}$ from the previous section and 
the corresponding operator $T_{\nu}$.
\begin{thm}
$T_{(n-2)/2}$ is a unitary $H$-intertwining operator from 
$\mathscr{C}_{(n-2)/2n}$ onto $\mathscr{H}_{(n-2)/2}$.
\end{thm}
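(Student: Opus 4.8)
The plan is to follow the proof of the preceding theorem, specialised to $\nu=(n-2)/2$, and then to replace the conclusion ``onto an irreducible submodule'' by ``onto all of $\mathscr{H}_{(n-2)/2}$'' using the degeneracy of the $K$-decomposition at the minimal point. First I would observe that the formal steps are $\nu$-independent: $T_{(n-2)/2}$ maps $\mathscr{C}_{(n-2)/2n}$ into $\mathcal{O}(\mathscr{D})$ and the computation establishing $T_\nu\sigma_{\nu/n}=\pi_\nu T_\nu$ goes through unchanged, so $T_{(n-2)/2}$ is formally $H$-intertwining. (Here $\alpha=(n-2)/(2n)$ is exactly the left endpoint of the admissible range $\frac{n-2}{2n}\le\alpha<\frac{n-1}{2n}$, so $\mathscr{C}_{(n-2)/2n}$ is well defined.) The role of the minimal value appears only in the norm of the constant function: since $(\nu-\frac{n-2}{2})_k=(0)_k$ vanishes for every $k\ge 1$, both the series for $\|1\|_{\mathscr{C}_{\nu/n}}^2$ and for $T_\nu 1$ collapse to their $k=0$ terms, giving $T_{(n-2)/2}1=1$ and $\|1\|_{\mathscr{C}_{(n-2)/2n}}^2=1=\|1\|_\nu^2$. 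Exactly as before, Schur's lemma applied to the irreducible representation $\sigma_{(n-2)/2n}$ forces the proportionality constant to be $1$, so $T_{(n-2)/2}$ is an isometry onto the closure $\overline{T_\nu(\bigoplus_m E_m(S^{n-1}))}$ of its image.

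It then remains to prove that this image is all of $\mathscr{H}_{(n-2)/2}$, which is the genuinely new point. Here I would invoke Proposition~\ref{FK}(b): at the minimal point $\mathscr{H}_{(n-2)/2}\big|_K=\overline{\bigoplus_m E_m}$, with no surviving factors $(zz^t)^k$ for $k\ge 1$, so the $K$-type structure of the target matches that of $\mathscr{C}_{(n-2)/2n}=\overline{\bigoplus_m E_m(S^{n-1})}$ term by term. To turn this into surjectivity I would argue cyclically. As recalled in the proof of the preceding theorem, the algebraic sum $\bigoplus_m E_m(S^{n-1})$ is the $\mathfrak{U}(\mathfrak{h})$-span of the constant $1$; since $T_\nu$ intertwines the $\mathfrak{h}$-actions and $T_{(n-2)/2}1=1$, its image equals $\pi_\nu(\mathfrak{U}(\mathfrak{h}))1\subseteq\mathscr{H}_{(n-2)/2}$. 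By the Proposition opening Section~5, the vector $1$ is a cyclic $L$-invariant vector for $\pi_\nu\colon H\to\mathscr{U}(\mathscr{H}_{(n-2)/2})$, and, as observed in the proof of Theorem~\ref{intertw}, $H$-cyclicity of $1$ entails its cyclicity for $\mathfrak{U}(\mathfrak{h})$. Hence $\overline{\pi_\nu(\mathfrak{U}(\mathfrak{h}))1}=\mathscr{H}_{(n-2)/2}$, i.e.\ $\overline{T_\nu(\bigoplus_m E_m(S^{n-1}))}=\mathscr{H}_{(n-2)/2}$.

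Combining the two parts, $T_{(n-2)/2}$ extends to a unitary $H$-intertwining isomorphism $\mathscr{C}_{(n-2)/2n}\to\mathscr{H}_{(n-2)/2}$, which simultaneously exhibits $\pi_{(n-2)/2}|_H$ as the irreducible complementary series $\sigma_{(n-2)/2n}$. I expect the real obstacle to be precisely the surjectivity step: one must be sure that no $K$-type $E_m$ of the target is missed by $T_\nu$, equivalently that $T_\nu$ does not annihilate any harmonic component. The cyclicity argument avoids a term-by-term nonvanishing check, but it relies essentially on the fact that at $\nu=(n-2)/2$ the directions $(zz^t)^k$, $k\ge 1$, have dropped out of $\mathscr{H}_\nu$ entirely; for any larger $\nu$ these directions are present and the same cyclic span is only a proper submodule, which is exactly why the stronger conclusion is special to the minimal point.
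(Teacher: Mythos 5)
Your proof is correct, but it reaches the two nontrivial conclusions — isometry and surjectivity — by a genuinely different route than the paper. The paper's own proof is computational: it evaluates $T_{(n-2)/2}$ on the highest weight vectors $(\zeta_1+i\zeta_2)^m$ of each $SO(n)$-type $E_m(S^{n-1})$ by converting the sphere integral to an integral over the unit disk and using the reproducing property of a weighted Bergman space there, obtaining the nonzero constant $\tfrac{n-2}{2\pi^2}$; this nonvanishing, together with the $\mathfrak{l}$-intertwining and Proposition \ref{FK}(b), gives that each $K$-type $E_m$ is hit, and the isometry is then obtained by a direct dilation argument (computing $\|T_{(n-2)/2}(p)(r\,\cdot)\|_{\nu}$ and letting $r\to 1$), with no appeal to Schur's lemma. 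You instead exploit the collapse of the Faraut--Kor\'anyi expansion at the minimal point to get the identity $T_{(n-2)/2}1=1$, recycle the Schur's lemma argument of the preceding theorem for the isometry (this requires the irreducibility of $\sigma_{(n-2)/2n}$, which is legitimate since $\alpha=(n-2)/2n$ lies in the admissible range of the definition and in the interior of the complementary series range), and then obtain surjectivity from the $H$-cyclicity of the constant function in $\mathscr{H}_{(n-2)/2}$ established in the proposition opening Section 5: the image of the algebraic sum is exactly $\pi_{\nu}(\mathfrak{U}(\mathfrak{h}))1$, which is dense. Your route is shorter, avoids the explicit integral evaluation, and isolates the conceptual reason the minimal point is special (only there is $T_{\nu}1$ equal to the cyclic vector $1$; for larger $\nu$ the cyclic span of $T_{\nu}1$ is the proper submodule of the preceding theorem). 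What the paper's route buys in exchange is independence from the $\mathfrak{h}$-irreducibility of $\sigma_{(n-2)/2n}$ and from Schur's lemma — it needs only $SO(n)$-representation theory plus the kernel expansion — and it produces the explicit constant, verifying nonvanishing on every $K$-type by hand. One small ordering point in your write-up: the Schur step presupposes that $T_{(n-2)/2}$ maps the algebraic sum into $\mathscr{H}_{(n-2)/2}$ (finite norms), which is only justified by the identification of the image with $\pi_{\nu}(\mathfrak{U}(\mathfrak{h}))1$ in your second paragraph; that observation should come first, but it is present, so the argument stands.
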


\begin{proof}
Recall that
\begin{eqnarray} \label{min}
\mathscr{C}_{(n-2)/n}=\overline{\bigoplus_m \,E_m(S^{n-1})}
\end{eqnarray}
and that the sum is a decomposition into $SO(n)$-irreducible subspaces. If 
we let $\mathcal{P}_{(n-2)/n}$ denote the set of all finite sums
in \eqref{min}, $\sigma_{(n-2)/n}$ defines a representation of 
$\mathfrak{l}$ on  $\mathcal{P}_{(n-2)/n}$. The polynomial
$(\zeta_1 + i\zeta_2)^m$ is a highest weight vector in $E_m$ for this 
representation. Moreover, the power series expansion of $K_{(n-2)/n}$
shows that $T_{(n-2)/2}$ is a polynomial in $E_m$. Since $T_{(n-2)/2}$ 
intertwines the $\mathfrak{l}$-actions, $T_{(n-2)/2}((\zeta_1 + 
i\zeta_2)^m)$ is a
highest weight vector space for $\pi_{(n-2)/2}(\mathfrak{l})$, i.e.,
\begin{eqnarray}
(T_{(n-2)/2}(\zeta_1 + i\zeta_2)^m)(z)=C_m(z+iz)^m,
\end{eqnarray}
for some constant $C_m$. We now determine $C_m$.
Choose $z=w \frac{1}{2}(1,-i,0,\ldots ,0)$, where $w$ is a complex number 
with $|w|<1$. In this case $zz^t=0, (z+iz)^m=w^m$. We now
compute $(T_{(n-2)/2}((\zeta_1+i\zeta_2)^m)(z)$.
\begin{eqnarray*}
\lefteqn{\int_{S^{n-1}}K_{(n-2)/n}(z,\zeta)(\zeta_1 + i\zeta_2)^m}\\
&=&\int_{S^{n-1}}(1-w(\zeta_1-i\zeta_2))^{-(n-2)/n}(\zeta_1+i\zeta_2)^md\sigma(\zeta)
\end{eqnarray*}
This integral only depends on the first two coordinates and can hence be 
converted to an integral over the unit disk, $U$ (cf \cite{rudinft} Prop 
1.4.4).
\begin{eqnarray*}
\lefteqn{\int_{S^{n-1}}(1-w(\zeta_1-i\zeta_2))^{-(n-2)/n}(\zeta_1+i\zeta_2)^md\sigma(\zeta)}\\
&=&\frac{\Gamma \left(\frac{n-2}{2}\right)}{\pi \Gamma 
\left(\frac{n}{2}\right)}
\int_U(1-w\overline{\zeta})^{-(n-2)/n}\zeta^m(1-|\zeta|^2)^{(n-4)/2}dm(\zeta).
\end{eqnarray*}
We have the power series expansion
\begin{eqnarray*}
(1-w\overline{\zeta})^{-(n-2)/2}=\sum_{k=0}^{\infty}\left(\frac{n-2}{2}\right)_k(z\overline{\zeta})^k
\end{eqnarray*}
Recall that $(1-w\overline{\zeta})^{-n/2}$ is the reproducing kernel for 
the weighted Bergman space $\mathscr{H}_{n/2}(U)$, defined as
\begin{eqnarray*}
\mathscr{H}_{n/2}(U)=\{f \in 
\mathcal{O}(U)\,|\,\frac{\Gamma\left(\frac{n}{2}\right)}{\pi\Gamma 
\left(\frac{n-2}{2}\right)}
\int_U|f(\zeta)|^2(1-|\zeta|^2)^{(n-4)/2}dm(\zeta)<\infty\},
\end{eqnarray*}
Polynomials of different degree are orthogonal in $\mathscr{H}_{n/2}(U)$ 
and hence we have
\begin{eqnarray*}
\lefteqn{\int_U(1-w\overline{\zeta})^{-(n-2)/n}\zeta^m(1-|\zeta|^2)^{(n-4)/2}dm(\zeta)}\\
&=&\int_U 
\sum_{k=0}^{\infty}\left(\frac{n-2}{2}\right)_k(z\overline{\zeta})^k 
\zeta^m(1-|\zeta|^2)^{(n-4)/2}dm(\zeta)\\
&=&\int_U 
\sum_{k=0}^{\infty}\left(\frac{n-2}{2}\right)\left(\frac{n}{2}\right)_m(z\overline{\zeta})^m 
\zeta^m(1-|\zeta|^2)^{(n-4)/2}
dm(\zeta)\\
&=&\pi w^m,
\end{eqnarray*}
where the last equality follows from the reproducing property in 
$\mathscr{H}_{n/2}(U)$.
Summing up, we have
\begin{eqnarray}
(T_{(n-2)/2}(\zeta_1+i\zeta_2)^m)(z)=\frac{n-2}{2\pi^2}(z_1+iz_2)^m
\end{eqnarray}
From this and the intertwining of the $\mathfrak{l}$-action, it follows 
that
\begin{eqnarray}
T_{(n-2)/2}\left(\bigoplus_m E_m(S^{n-1})\right) \subseteq \bigoplus_m 
E_m
\end{eqnarray}
To compute the norm of $T_{(n-2)/2}(p)$ where $p \in E_k(S^{n-1})$, we 
first fix $r<1$ and consider the polynomial $T_{(n-2)/2}(p(rz))$.
By definition
\begin{eqnarray}
T_{(n-2)/2}(p)(rz)&=&\int_{S^{n-1}}K_{\nu}(rz,\zeta)p(\zeta)d\sigma(\zeta)\nonumber\\
&=&\int_{S^{n-1}}K_{\nu}(z,r\zeta)p(\zeta)d\sigma(\zeta).\label{riemann}
\end{eqnarray}
The norm is given by
\begin{eqnarray*}
\|T_{(n-2)/2}(p)(r\,\cdot)\|_{\nu}^2=\int_{S^{n-1}}\int_{S^{n-1}}p(\zeta)\overline{p(\eta)}K_{\nu}(r\zeta,r\eta)d\sigma(\zeta)d\sigma(\eta).
\end{eqnarray*}
Finally, we let $r \rightarrow 1$ and obtain
\begin{eqnarray*}
\|T_{(n-2)/2}(p)\|_{\nu}^2=\int_{S^{n-1}}\int_{S^{n-1}}p(\zeta)\overline{p(\eta)}K_{\nu}(\zeta,\eta)d\sigma(\zeta)d\sigma(\eta).
\end{eqnarray*}
From this and the orthogonality of the spaces $E_k$, it follows that 
$T_{(n-2)/2}$ maps $(\bigoplus_m E_m(S^{n-1})$ isometrically onto
$(\bigoplus_m E_m)$. Hence it extends to a unitary operator from 
$\mathscr{C}_{(n-2)/2n}$ onto $\mathscr{H}_{(n-2)/2}$.
\end{proof}

\end{section}


\begin{thebibliography}{99}

\bibitem{cow}
{Cowling, Michael}
{{\it Harmonic analysis on some nilpotent {L}ie groups (with
              application to the representation theory of some semisimple
              {L}ie groups)}}
{Topics in modern harmonic analysis, Vol. I, II (Turin/Milan,
              (1982),
 81--123,
Ist. Naz. Alta Mat. Francesco Severi,
Rome,
1983}


\bibitem{FK} {Faraut, J. and Kor{\'a}nyi, A.}{{\it Function spaces and reproducing kernels on bounded symmetric
              domains}} {J. Funct. Anal. {\bf 88},(1990), 1, 64--89}

\bibitem{FKsymm}
{Faraut, J. and Kor{\'a}nyi, A. }
{``Analysis on symmetric cones''}
{Oxford Mathematical Monographs,
 Oxford Science Publications,
 The Clarendon Press Oxford University Press,
 New York,
 1994}

\bibitem{abstrfolland}
{Folland, Gerald B.}
{``A course in abstract harmonic analysis''}
{Studies in Advanced Mathematics,
CRC Press,
Boca Raton, FL,
1995}

\bibitem{gasp}
{Gasper, George and Rahman, Mizan}
{``Basic hypergeometric series''}
{Encyclopedia of Mathematics and its Applications,
35,
Cambridge University Press,
Cambridge,
1990}

\bibitem{helg2}
 {Helgason, Sigurdur}
 {``Groups and geometric analysis''}
 {Mathematical Surveys and Monographs,83, American Mathematical Society,Providence, RI, 2000}
   
\bibitem{howedual}
{Howe, Roger}
{{\it Transcending classical invariant theory}}
{J. Amer. Math. Soc.,
{\bf 2},
(1989,
3,
535--552}

\bibitem{hua2}
{Hua, Loo-keng}
{{\it On the theory of {F}uchsian functions of several variables}}
{Ann. of Math. (2),
 {\bf 47},
 (1946),
 167--191}

\bibitem{jak-v-restr}
{Jakobsen, Hans Plesner and Vergne, Mich{\`e}le}
{{\it Restrictions and expansions of holomorphic representations}}
{J. Funct. Anal.,
{\bf 34},
(1979),
1,
29--53}

\bibitem{k-w}
{Kashiwara, M. and Vergne, M.}
{{\it On the {S}egal-{S}hale-{W}eil representations and harmonic polynomials}}
{Invent. Math.,
{\bf 44},
(1978),
1,
1--47}

\bibitem{knapp1}
{Knapp, Anthony W.}
{``Representation theory of semisimple groups , an overview based on examples''}
{Princeton University Press,
Princeton, NJ,
2001}

\bibitem{knapp-beyond}
{Knapp, Anthony W.}
{``Lie groups beyond an introduction''}
{Progress in Mathematics,
140,
Birkh\"auser Boston Inc.,
Boston, MA,
2002}

\bibitem{knapp2}
{Knapp, Anthony W. and Vogan, Jr., David A.}
{``Cohomological induction and unitary representations''}
{Princeton Mathematical Series,
45,
Princeton University Press,
Princeton, NJ,
1995}

\bibitem{kob-orsted-minbranch}
{Kobayashi, Toshiyuki and {\O}rsted, Bent}
{{\it Analysis on the minimal representation of {$O(p,q)$}, II}. {B}ranching laws}
{Adv. Math.,
 {\bf 180},
 (2003),
 2,
 513--550}

\bibitem{loos}
{Loos, O.}
{``Bounded symmetric domains and Jordan pairs''}
{Lecture notes, University of California, Irvine, 1977}
  
\bibitem{mur} {Murphy, Gerard J.} {``{$C\sp *$}-algebras and operator theory''} {Academic Press Inc.,Boston, MA, 1990}

\bibitem{naim}
{Na{\u\i}mark, M. A.}
{``Normed rings''}
{P. Noordhoff N. V.,
Groningen,
1964}

\bibitem{nerbeta}
{Neretin, Yu. A.}
{{\it Beta integrals and finite orthogonal systems of {W}ilson polynomials}}
{Mat. Sb.,
{\bf 193},
(2002),
7,
131--148}

\bibitem{ner}
{Neretin, Yurii A.}
{{\it Plancherel formula for {B}erezin deformation of {$L\sp 2$} on {R}iemannian symmetric space}}
{J. Funct. Anal.,
{\bf 189},
(2002),
2,
336--408}

\bibitem{Rossi-V}
{Rossi, H. and Vergne, M.}
{{\it Analytic continuation of the holomorphic discrete series of a semi-simple {L}ie group}}
{Acta Math.,
{\bf 136},
(1976),
1-2,
1--59}

\bibitem{rudinft}
{Rudin, Walter}
{``Function theory in the unit ball of {${\bf C}\sp{n}$}''}
{Grundlehren der Mathematischen Wissenschaften [Fundamental
              Principles of Mathematical Science],
241,
Springer-Verlag,
New York,
1980}

\bibitem{rudfa}
{Rudin, Walter}
{``Functional analysis''}
{International Series in Pure and Applied Mathematics,
 McGraw-Hill Inc.,
 New York,
 1991}

\bibitem{stein-w}
{Stein, Elias M. and Weiss, Guido}
{``Introduction to {F}ourier analysis on {E}uclidean spaces'',Princeton Mathematical Series, No. 32}
{Princeton University Press,
Princeton, N.J.,
1971}
    
\bibitem{untup}
{Unterberger, A. and Upmeier, H.}
{{\it The {B}erezin transform and invariant differential operators}}
{Comm. Math. Phys.,
{\bf 164},
(1994),
3,
563--597}

\bibitem{dijkpevz}
{van Dijk, G. and Pevzner, M.}
{{\it Berezin kernels of tube domains}}
{J. Funct. Anal.,
{\bf 181},
(2001),
2,
189--208}

\bibitem{wall}
{Wallach, Nolan R.}
{{\it The analytic continuation of the discrete series. {I}, {II}}}
{Trans. Amer. Math. Soc.,
{\bf 251},
(1979),
1--17, 19--37}

\bibitem{wilson}
{Wilson, James A.}
{{\it Some hypergeometric orthogonal polynomials}}
{SIAM J. Math. Anal.,
{\bf 11},
(1980),
4,
690--701}

\bibitem{zhtams}
{Zhang, Genkai}
{{\it Berezin transform on real bounded symmetric domains}}
{Trans. Amer. Math. Soc.,
{\bf 353},
(2001),
9,
3769--3787 (electronic)}

\bibitem{zhtp}
{Zhang, Genkai}
{{\it Tensor products of minimal holomorphic representations}}
{Represent. Theory,
{\bf 5},
(2001),
164--190 (electronic)}

\bibitem{zhSB}
{{Zhang, Genkai}
{{\it Branching coefficients of holomorphic representations and
              {S}egal-{B}argmann transform}}
{J. Funct. Anal.,
{\bf 195},
(2002),
2,
306--349}
}

\end{thebibliography}
\end{document}